\documentclass[final,1p,times,sort&compress]{elsarticle}%
\usepackage{amsmath}
\usepackage{amssymb}
\usepackage{amscd}
\usepackage{amsfonts}
\usepackage{enumerate}
\usepackage{mathrsfs}
\usepackage{xy}
\usepackage{stmaryrd}
\usepackage{graphicx}
\usepackage{fancybox}
\usepackage{portland}
\usepackage{color}
\usepackage{multirow}
\usepackage{exscale}
\usepackage{enumitem,array}%
\usepackage{subfigure}
\usepackage{extarrows}
\usepackage{amsfonts}
\usepackage{slashbox}

\def\RR{{\mathbb{R}}}

\newtheorem{mytheo}{Theorem}[section]

\newtheorem{mydef}[mytheo]{Definition}

\newtheorem{prop}[mytheo]{Proposition}

\newtheorem{algo}[mytheo]{Algorithm}

\newtheorem{rem}[mytheo]{Remark}
\newcommand{\norm}[1]{\left\Vert#1\right\Vert}
\newcommand{\abs}[1]{\left\vert#1\right\vert}

\newcommand{\eps}{\varepsilon}

\vfuzz2pt \hfuzz2pt
\newcounter{remark}

\newcounter{problem}

\makeatletter
\def\@upcite#1#2{\textsuperscript{[{#1\if@tempswa , #2\fi}]}}
\makeatother
\newenvironment{proof}{\vspace{1ex}
{\it Proof. }\hspace{0.3em}}{\vspace{1ex}} \journal{ }

\begin{document}
\begin{frontmatter}
\title{Geometric continuous-stage exponential energy-preserving integrators   for charged-particle dynamics in a magnetic field  from normal to strong regimes}

\author[1]{Ting Li}
\author[1]{Bin Wang\corref{cor1}}


\address[1]{School of Mathematical and Statistics, Xi'an Jiaotong University,
710049 Xi'an, China.}


\ead{litingmath@stu.xjtu.edu.cn, wangbinmaths@xjtu.edu.cn}\cortext[cor1]{Corresponding author.}


\begin{abstract}
This paper is concerned with  geometric exponential energy-preserving integrators  for solving charged-particle
dynamics in a magnetic field from normal to strong regimes. We firstly formulate the scheme of the methods for the system in a uniform magnetic field by using the idea of continuous-stage methods, and then discuss its energy-preserving property. Moreover, symmetric conditions and order conditions are analysed. Based on those conditions, we propose two practical symmetric continuous-stage  exponential energy-preserving integrators of order up to  four. Then we extend the obtained methods to the system in a nonuniform  magnetic field and derive their properties including the symmetry, convergence and energy conservation.
Numerical experiments demonstrate the efficiency of the proposed methods in comparison with some existing
schemes in the literature.
\end{abstract}

\begin{keyword}
charged particle dynamics, geometric integrators, energy conservation, exponential
integrators
\end{keyword}
\end{frontmatter}
\vskip0.5cm \noindent Mathematics Subject Classification (2010):
{65P10, 65L05} \vskip0.5cm \pagestyle{myheadings}
\thispagestyle{plain}

\section{Introduction}\label{sec:intro}
 The dynamics of charged particles under the influence of external electromagnetic field  are of paramount importance
in plasma physics and they have important applications \cite{Arnold,Arnold97,brizard07fon,S4,Possanner18}. For example, these equations appear in Vlasov equations \cite{VP1,Zhao,VP3,S2,VP8,S5,S3} which are of fundamental importance in tokamak plasmas.
This article is concerned with the numerical solution of the following charged-particle dynamics (CPD)
(see \cite{Hairer2018,Lubich2020,WZ})
 \begin{equation}\label{charged-particle sts-cons}
\begin{array}[c]{ll}
\ddot{x}(t)=\dot{x}(t) \times  \frac{B(x(t))}{\epsilon} +F(x(t)), \quad
x(t_0)=x_0,\quad \dot{x}(t_0)=\dot{x}_0,\ \ t\in[t_{0},T],
\end{array}
\end{equation}where  $x(t)\in \RR^3$  denotes  the position of a particle, $B(x)$ is a given magnetic field, $F(x)$
is the negative gradient of the scalar potential $U(x)$,   and  $\epsilon\in(0,1]$ is a dimensionless parameter inversely proportional to the strength of the magnetic field. In this paper, we consider two regimes of
	$\epsilon$.  For the  \emph{strong regime} $0<\epsilon\ll 1$,  the solution of \eqref{charged-particle sts-cons} has the highly oscillatory behavior and this case arises from many important applications such as the magnetic fusion.  In contrast, the \emph{normal regime}, where the solution is not highly oscillatory and the system is a normal dynamic.
For these both regimes, the solution of \eqref{charged-particle
sts-cons} exactly conserves the following   energy
\begin{equation}\label{energy of cha}
H(x,v)=\frac{1}{2}\abs{v}^2+U(x),
\end{equation}
where  $v=\dot{x}$.

In the past few decades, many kinds of effective numerical integrators
for the system \eqref{charged-particle sts-cons} have been proposed. For the  normal regime, Boris method is a popular integrator which  was firstly developed in \cite{Boris} and further studied in \cite{Hairer2017,Qin2013}.  Concerning structure-preserving  schemes on this topic,  many different kinds of methods
have been derived. Volume-preserving integrators were constructed in
\cite{He2015} and symmetric multistep methods were studied in \cite{Lubich2017,wangwu2020}. The researchers in \cite{Hairer2018} studied variational integrators and the authors of \cite{Mei2017,Tao2016,Zhang2016} formulated symplectic or
K-symplectic integrators. Recently, energy-preserving methods  for charged-particle dynamics  in the normal regime were  proposed
in \cite{Brugnano2020,Brugnano,liting1,Ricketson}.

For the strong regime, i.e. $0<\epsilon\ll1$ in  \eqref{charged-particle
sts-cons}, some novel numerical methods have been developed and analysed recently.  Long time analysis of variational integrators  were presented in  \cite{Hairer2018,wangwu2020} and two filtered Boris algorithms were formulated in \cite{Lubich2020} under the maximal ordering scaling. Meanwhile,  different kinds of uniformly accurate schemes were derived in  \cite{VP1,Zhao,VP3,VP8}. Unfortunately, these effective methods do not conserve the energy \eqref{energy of cha} exactly.  In order to get energy-preserving (EP) methods for CPD, an exponential energy-preserving integrator was  recently developed in \cite{wang2021} for (\ref{charged-particle
sts-cons}) under a uniform  magnetic field $B$. Moreover recently,    first-order splitting energy-preserving methods  were researched in   \cite{WZ} with a rigorous error analysis and  high-order splitting EP methods  were considered in \cite{cui2021} but without convergence analysis.

On the other hand, “continuous-stage” methods have been received
much attention.  In \cite{E. Hairer}, Hairer  proposed
continuous-stage Runge-Kutta  method for Hamiltonian systems.
Following the approach of that paper,   a sufficient and necessary
energy-preserving condition of continuous-stage Runge-Kutta methods was given in
\cite{Miyatake}.  Continuous-stage Runge-Kutta-Nystr\"{o}m  methods for solving second-order
ordinary differential equations were discussed in \cite{Tang2018}.
 Recently, \cite{wbin2020} constructed  a continuous-stage modified
 Leap-frog scheme for high dimensional semi-linear Hamiltonian wave equations.
On the basis of these previous publications, it follows that continuous-stage methods can have exact energy conservation and good convergence.  Moreover, exponential-type methods have been shown to be competitive in the solving of highly oscillatory systems \cite{S6,Hochbruck2005,Hochbruck2010,Hochbruck2009}. Therefore, this paper is
devoted to exploring continuous-stage exponential energy-preserving integrators
for solving charged-particle dynamics \eqref{charged-particle
sts-cons}    in a magnetic field from normal to strong regimes.

The aim of the work is to propose and analyze a kind of continuous-stage exponential energy-preserving  integrators for solving \eqref{charged-particle sts-cons} with $\epsilon\in(0,1]$. To get the  energy-preserving property, we take advantage of continuous-stage methods and exponential integrators. With the energy-preserving conditions derived in the paper,  continuous-stage  exponential integrators with energy conservation can be formulated. To obtain high accuracy, symmetry and order conditions are derived and based on which, two symmetric continuous-stage exponential energy-preserving integrators of order up to  four are presented. Compared with the existing work, the main contributions of this paper involve in two aspects. We combine continuous-stage methods with exponential integrators to get continuous-stage exponential integrators with energy conservation  for   \eqref{charged-particle sts-cons} in a magnetic field with two different regimes. Moreover, symmetry is considered for this novel class of integrators  and based on which, higher-order EP methods can be constructed and analysed. The proposed schemes succeed in equipping the favorable continuous-stage exponential integrators with symmetry and exact energy conservation in long times.

The rest of the paper
is organised as follows. In
Section \ref{sec: methods}, we first formulate the scheme of  integrators for  \eqref{charged-particle
sts-cons} in a uniform magnetic field $B$ and then the energy-preserving conditions and  symmetric
conditions  are derived. In Section
\ref{sec: err ana}, we analyse the convergence of the  integrators.
Section \ref{sec:prac meth} constructs two practical symmetric
continuous-stage exponential energy-preserving  integrators of order up to  four.
 Two numerical experiments  are performed in Section \ref{sec:tests}  to show the efficiency of our methods  in comparison with the Boris method,  the averaged vector field (AVF) method and a splitting EP method of \cite{WZ}.
 In Section \ref{sec:new}, we extend the obtained methods as well as their properties to the  CPD \eqref{charged-particle
sts-cons} in a nonuniform  magnetic field
 and carry out two numerical tests to show their performance. The last section includes the conclusions of this paper.

\section{The integrators and structure-preserving properties}\label{sec:
methods}
From this section to Section \ref{sec:tests}, we shall formulate, analyse and test the novel   integrators for  \eqref{charged-particle
sts-cons} in a uniform magnetic field
 $B=(B_1,B_2,B_3)^{\intercal}$, where $B_i \in \RR$ for $i=1,2,3.$ According to the definition of the cross
 product, it is arrived that
$ \dot{x} \times  B =  \widetilde{B} \dot{x},$ and
$$\widetilde{B}=\left(
                   \begin{array}{ccc}
                     0 & B_3 & -B_2 \\
                     -B_3 & 0 & B_1 \\
                     B_2 & -B_1 & 0 \\
                   \end{array}
                 \right).
$$
Then the charged-particle dynamics \eqref{charged-particle sts-cons}
can be rewritten as
\begin{equation}\label{1equal}
\frac{d}{dt}\left(
                   \begin{array}{ccc}
                   x \\
                    v \\
                   \end{array}
                 \right)=\left(
                   \begin{array}{ccc}
                     0 &  I\\
                     0 & K \\
                   \end{array}
                 \right)
                 \left(
                   \begin{array}{ccc}
                   x \\
                    v \\
                   \end{array}
                 \right)+\left(
                   \begin{array}{ccc}
                   0\\
                    F(x) \\
                   \end{array}
                 \right),
\end{equation}
with $K=\frac{1}{\epsilon}\widetilde{B}$.
 In order to derive effective methods, applying variation-of-constants formula to \eqref{1equal} gives the following expression of the exact solution.

 \begin{mytheo} \label{VOC-SPE} The  exact solution of  the CPD  \eqref{charged-particle sts-cons} in a uniform magnetic field $B$ can be
expressed as
\begin{equation}\label{VOC}
\left\{\begin{aligned}
x(t_n+h)&=x(t_n)+h^2
\int_{0}^1(1-\tau) \varphi_1\big((1-\tau) hK \big) F\big(x(t_n+h\tau)\big)
d\tau
 +h\varphi_1( hK ) v(t_n),\\
v(t_n+h)&=\varphi_{0}( hK )v(t_n)+h
\int_{0}^1
 \varphi_{0}\big((1-\tau) hK \big) F\big(x(t_n+h\tau)\big)  d\tau,
\end{aligned}\right.
\end{equation}
for any stepsize $h\geq0$ and $t_n=nh$.
Here the $\varphi$-functions are defined by (see
\cite{Hochbruck2005,Hochbruck2010,Hochbruck2009})
\begin{equation}\label{phi}
 \varphi_0(z)=e^{z},\ \ \varphi_k(z)=\int_{0}^1
e^{(1-\sigma)z}\frac{\sigma^{k-1}}{(k-1)!}d\sigma, \ \
k=1,2,\ldots.
\end{equation}
\end{mytheo}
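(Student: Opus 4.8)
The plan is to regard \eqref{1equal} as a first-order inhomogeneous linear system $\dot{y}=My+g(t)$ for $y=(x,v)^{\intercal}$ with block coefficient matrix $M=\begin{pmatrix}0&I\\0&K\end{pmatrix}$ and inhomogeneity $g(t)=(0,F(x(t)))^{\intercal}$, and to apply the classical matrix variation-of-constants formula on the interval $[t_n,t_n+h]$,
\[
y(t_n+h)=e^{hM}y(t_n)+\int_{0}^{h}e^{(h-s)M}g(t_n+s)\,ds .
\]
The only real content is then (i) to identify the matrix exponential $e^{tM}$ and its action on vectors of the form $(0,w)^{\intercal}$, and (ii) to perform the rescaling $s=h\tau$ so that the $\varphi$-functions \eqref{phi} emerge.

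For step (i), I would exploit the block structure of $M$: a one-line induction gives $M^k=\begin{pmatrix}0&K^{k-1}\\0&K^k\end{pmatrix}$ for every $k\ge 1$. Summing $e^{tM}=I+\sum_{k\ge 1}\frac{t^k}{k!}M^k$ blockwise and using $\sum_{k\ge 0}\frac{z^k}{(k+1)!}=\varphi_1(z)$ together with $\varphi_0(z)=e^{z}$ yields
\[
e^{tM}=\begin{pmatrix}I & t\varphi_1(tK)\\[2pt] 0 & \varphi_0(tK)\end{pmatrix}.
\]
Taking $t=h$ and applying this to $y(t_n)$ produces the homogeneous contributions $x(t_n)+h\varphi_1(hK)v(t_n)$ and $\varphi_0(hK)v(t_n)$; taking $t=h-s$ and applying it to $g(t_n+s)$ gives
\[
e^{(h-s)M}g(t_n+s)=\begin{pmatrix}(h-s)\varphi_1\big((h-s)K\big)F(x(t_n+s))\\[2pt]\varphi_0\big((h-s)K\big)F(x(t_n+s))\end{pmatrix}.
\]

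For step (ii), substituting $s=h\tau$, $ds=h\,d\tau$ into $\int_0^h e^{(h-s)M}g(t_n+s)\,ds$ turns its first component into $h^{2}\int_0^1(1-\tau)\varphi_1\big((1-\tau)hK\big)F(x(t_n+h\tau))\,d\tau$ and its second component into $h\int_0^1\varphi_0\big((1-\tau)hK\big)F(x(t_n+h\tau))\,d\tau$; adding the homogeneous parts from step (i) gives exactly \eqref{VOC}. An equivalent route is to first solve the velocity equation $\dot v=Kv+F(x)$ by variation of constants, then integrate $\dot x=v$ over $[t_n,t_n+h]$, exchange the order of integration in the resulting iterated integral by Fubini, and use $\int_r^h e^{(s-r)K}\,ds=(h-r)\varphi_1\big((h-r)K\big)$ before rescaling $r=h\tau$; it reproduces the same formula.

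I do not anticipate a genuine obstacle: the derivation is essentially bookkeeping. The single point requiring care is the power-series identification of the off-diagonal block of $e^{tM}$ with $t\varphi_1(tK)$ (equivalently, the identity $\int_0^t e^{(t-s)K}\,ds=t\varphi_1(tK)$ read off from \eqref{phi}) together with the consistent handling of the argument $(1-\tau)hK$ under the substitution $s=h\tau$. It is worth recording that the representation holds for every stepsize $h\ge 0$ and imposes no smallness condition on $\epsilon$, since $M$, and hence $e^{tM}$, is well defined for all $K=\tfrac{1}{\epsilon}\widetilde{B}$; this is precisely what makes \eqref{VOC} a uniform starting point from the normal to the strong regime.
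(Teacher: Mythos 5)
Your derivation is correct and follows exactly the route the paper intends: it states Theorem \ref{VOC-SPE} as an immediate consequence of applying the variation-of-constants formula to the first-order reformulation \eqref{1equal}, which is precisely your argument, with the block computation of $e^{tM}$ and the rescaling $s=h\tau$ correctly filling in the details the paper leaves implicit.
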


Based on  these preliminaries and the idea of continuous-stage methods, we define the following integrators.
\begin{mydef}
An $s$-degree continuous-stage  exponential integrator
 for solving the CPD
\eqref{charged-particle sts-cons}  in a uniform magnetic field $B$ is defined as
\begin{equation}\label{CSEEP}
\left\{\begin{aligned} &X_{\tau}=x_{n}+ hC_\tau( hK ) v_{n}+h^2 \int_{0}^{1}{A}_{\tau\sigma}( hK )F (X_\sigma)d\sigma,\ \ \ 0\leq\tau\leq1,\\
&x_{n+1}=x_{n}+ h\varphi_1( hK ) v_{n}+h^2
\int_{0}^{1}\bar{B}_\tau( hK )F (X_\tau)d\tau,\\
&v_{n+1}=\varphi_0( hK )v_{n}+h
\int_{0}^{1}B_{\tau}( hK )F
(X_\tau)d\tau,
\end{aligned}\right.
\end{equation}
where $h$ is the stepsize,  $ X_{\tau} $ is a polynomial of degree $ s $ with respect to $
\tau $ satisfying \begin{equation}\label{X01} X_{0}=x_{n},\ \ \
X_{1}=x_{n+1},\end{equation}
 $ C_{\tau}(hK) $, $ \bar{B}_{\tau}( hK ) $ and $
B_{\tau}( hK ) $ are polynomials of
degree $ s $ for $ \tau $ and depend on $ hK$, and $ A_{\tau\sigma}( hK ) $ is a
polynomial of degree $ s $ for $ \tau $, and $ s-1 $ for $ \sigma $
and depend on $  hK  $. The $
C_{\tau}( hK ) $ is assumed to satisfy
\begin{equation}\label{def C}
C_{c_{i}}( hK )=c_{i}\varphi_{1}(c_{i} hK ),
\end{equation}
where $ c_{i} $ with $ i=1,2, \ldots , s+1 $ are the fitting nodes, and one of them should be 1.
\end{mydef}

In this paper,   we choose $0= c_{1}\leq c_2 \leq\ldots \leq  c_{s+1}=1$ and then get
from \eqref{def C}  that
$$ C_{0}( hK )=C_{c_{1}}( hK )=0 \   \ \ \ \ \textmd{and} \ \ \ \ C_{1}( hK )=C_{c_{s+1}}( hK )=\varphi_{1}( hK ).$$
The function
$C_{\tau}( hK ) $  is considered as
\begin{equation*}
C_{\tau}( hK )=\sum\limits_{i=1}^{s+1}L_{i}(\tau)c_{i}\varphi_{1}(c_{i} hK ),
\end{equation*}
where $L_{i}(\tau)$ for $i=1,2,\ldots,s+1$ are Lagrange interpolation
functions   $
L_{i}(\tau)=\prod_{j=1,j\neq
i}^{s+1}\frac{\tau-c_{j}}{c_{i}-c_{j}}$ for $i=1,2,\ldots,s+1.$

\begin{rem}
It is noted that the nonlinear part of  (\ref{CSEEP}) is uniformly bounded for $\epsilon\in(0,1]$, and  the computational cost  per time step is uniform in $\epsilon\in(0,1]$ when a nonlinear iteration solver is applied.   In contrast, the other energy-preserving methods \cite{Brugnano2020,Brugnano,liting1,Ricketson} for solving CPD  \eqref{charged-particle sts-cons}  have the stiffness in their nonlinear equations. When some iteration solver such as fixed-point  or Newton's method is used, its convergence depends on $1/\epsilon$ for a given $h>0$ and the iteration converges slowly or may even not converge for  small $\epsilon$.

\end{rem}

In what follows, we study the structure-preserving properties of the integrator \eqref{CSEEP}. The first theorem is about energy-preserving and the second is for symmetry.
\begin{mytheo}
 The integrator \eqref{CSEEP} for solving the CPD \eqref{charged-particle sts-cons} in a uniform magnetic field $B$ is energy-preserving, i.e.,
  $$ H(x_{n+1},v_{n+1})=H(x_{n},v_{n}),\ \ \ \textmd{for}\ \ n=0,1,\ldots,$$
   if the coefficients satisfy
\begin{equation}\label{EPTJ}
\begin{aligned}
\varphi_{0}(- hK )B_{\tau}( hK )=C_{\tau}^{'}(- hK ),\ \ \
B_{\tau}(- hK )B_{\sigma}( hK )
=A_{\tau\sigma}^{'}( hK )+A_{\sigma\tau}^{'}(- hK ),\end{aligned}
\end{equation}
where $ A_{\tau\sigma}^{'}( hK )=\frac{\partial}{\partial\tau}A_{\tau\sigma}( hK ) $ \textmd{and} $ C_{\tau}^{'}( hK )=\frac{d}{d\tau}C_{\tau}( hK ).$
\end{mytheo}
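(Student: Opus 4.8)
The plan is to compute the one-step energy increment $H(x_{n+1},v_{n+1})-H(x_n,v_n)$ directly — taking the method's output $(x_{n+1},v_{n+1})$ and the stage polynomial $X_\tau$ as given — and to show it reduces to a term linear in $v_n$ plus a term quadratic in the internal values $\{F(X_\tau)\}_{0\le\tau\le1}$, whose two kernels are annihilated precisely by the two identities in \eqref{EPTJ}. The one structural input is that $K=\frac1\epsilon\widetilde B$ is skew-symmetric; hence $\varphi_0(hK)=e^{hK}$ is orthogonal with $\varphi_0(hK)^\intercal=\varphi_0(-hK)=\varphi_0(hK)^{-1}$, and, more generally, every matrix coefficient $G(hK)$ appearing in \eqref{CSEEP} (all of them built from $\varphi$-functions and powers of $hK$) obeys $G(hK)^\intercal=G(-hK)$, because transposition carries $hK$ to $-hK$. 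Since $\partial_\tau$ commutes with transposition, the same sign flip holds for $C_\tau'(hK)$ and $A_{\tau\sigma}'(hK)$. These facts will be invoked silently below.

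Split the increment as $\big(\tfrac12\abs{v_{n+1}}^2-\tfrac12\abs{v_n}^2\big)+\big(U(x_{n+1})-U(x_n)\big)$. For the kinetic part, substitute the third line of \eqref{CSEEP} into $\abs{v_{n+1}}^2=v_{n+1}^\intercal v_{n+1}$ and expand; the pure $v_n$ contribution is $v_n^\intercal\varphi_0(hK)^\intercal\varphi_0(hK)v_n=\abs{v_n}^2$ by orthogonality and cancels, leaving
\[
\begin{aligned}
\tfrac12\abs{v_{n+1}}^2-\tfrac12\abs{v_n}^2 = {}& h\,v_n^\intercal\!\int_0^1\varphi_0(-hK)B_\tau(hK)F(X_\tau)\,d\tau \\
&{}+\tfrac{h^2}{2}\!\int_0^1\!\!\int_0^1 F(X_\tau)^\intercal B_\tau(-hK)B_\sigma(hK)F(X_\sigma)\,d\tau\,d\sigma .
\end{aligned}
\]
For the potential part, use that $X_\tau$ is a polynomial path with $X_0=x_n$ and $X_1=x_{n+1}$ (this is \eqref{X01}) together with $F=-\nabla U$, so that $U(x_{n+1})-U(x_n)=\int_0^1\tfrac{d}{d\tau}U(X_\tau)\,d\tau=-\int_0^1 F(X_\tau)^\intercal X_\tau'\,d\tau$; differentiating the first line of \eqref{CSEEP} in $\tau$ gives $X_\tau'=h\,C_\tau'(hK)v_n+h^2\int_0^1 A_{\tau\sigma}'(hK)F(X_\sigma)\,d\sigma$, and rewriting the scalar $F(X_\tau)^\intercal C_\tau'(hK)v_n=v_n^\intercal C_\tau'(-hK)F(X_\tau)$ yields
\[
\begin{aligned}
U(x_{n+1})-U(x_n) = {}& -h\,v_n^\intercal\!\int_0^1 C_\tau'(-hK)F(X_\tau)\,d\tau \\
&{}-h^2\!\int_0^1\!\!\int_0^1 F(X_\tau)^\intercal A_{\tau\sigma}'(hK)F(X_\sigma)\,d\tau\,d\sigma .
\end{aligned}
\]

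Adding the two displays, the terms linear in $v_n$ combine into $h\,v_n^\intercal\int_0^1[\varphi_0(-hK)B_\tau(hK)-C_\tau'(-hK)]F(X_\tau)\,d\tau$, which vanishes by the first identity of \eqref{EPTJ}. For the quadratic part, I would symmetrise the $A$-double integral: being a scalar it equals its transpose, and transposing the integrand (using $A_{\tau\sigma}'(hK)^\intercal=A_{\tau\sigma}'(-hK)$) and relabelling $\tau\leftrightarrow\sigma$ shows it equals $\tfrac12\int_0^1\!\int_0^1 F(X_\tau)^\intercal[A_{\tau\sigma}'(hK)+A_{\sigma\tau}'(-hK)]F(X_\sigma)\,d\tau\,d\sigma$; the total quadratic contribution is then $\tfrac{h^2}{2}$ times the double integral of $F(X_\tau)^\intercal[B_\tau(-hK)B_\sigma(hK)-A_{\tau\sigma}'(hK)-A_{\sigma\tau}'(-hK)]F(X_\sigma)$, which vanishes by the second identity of \eqref{EPTJ}. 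Hence $H(x_{n+1},v_{n+1})=H(x_n,v_n)$, and the conclusion follows by induction on $n$. The computation is essentially routine; the only places demanding care are the consistent tracking of transposes (the $hK\mapsto-hK$ flips) through the expansion and the symmetrisation of the $A$-term — this last step is exactly what forces the "plus its mirror image" form $A_{\tau\sigma}'(hK)+A_{\sigma\tau}'(-hK)$ in the second condition of \eqref{EPTJ}.
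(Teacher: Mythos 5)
Your proposal is correct and follows essentially the same route as the paper's proof: expand the kinetic part using the orthogonality of $\varphi_0(hK)$, write $U(x_{n+1})-U(x_n)$ as a line integral along the stage path via \eqref{X01}, cancel the $v_n$-linear term with the first condition of \eqref{EPTJ}, and symmetrise the quadratic kernel in $\tau\leftrightarrow\sigma$ to invoke the second. The only cosmetic difference is that you symmetrise only the $A$-part of the double integral while the paper symmetrises the whole kernel; since $B_\tau(-hK)B_\sigma(hK)$ is invariant under the swap-and-transpose operation, the two are equivalent.
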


\begin{proof}
According to the scheme of   \eqref{CSEEP} and $ H(x,v) $
given in \eqref{energy of cha}, we have
\begin{equation}
\begin{aligned}\nonumber
&H(x_{n+1},v_{n+1})-H(x_{n},v_{n})
=\frac{1}{2}v_{n+1}^{\intercal}v_{n+1}+U(x_{n+1})-\frac{1}{2}v_{n}^{\intercal}v_{n}-U(x_{n})\\
=&\frac{1}{2}\big(\varphi_0( hK )v_{n}+h\int_{0}^{1}B_{\tau}( hK )F (X_\tau)d\tau \big)^{\intercal} \big(\varphi_0( hK )v_{n}\\
  &+h\int_{0}^{1}B_{\tau}( hK )F (X_\tau)d\tau \big)+\int_{0}^{1}\big(\nabla U(X_{\tau})\big)^{\intercal}dX_{\tau}-\frac{1}{2}v_{n}^{\intercal}v_{n}.\\
\end{aligned}
\end{equation}Here we have used the result
$\int_{0}^{1}\big(\nabla U(X_{\tau})\big)^{\intercal}dX_{\tau}=\int_{0}^{1} dU(X_{\tau})=U(x_{n+1})-U(x_{n}),$
which is obtained by considering $ X_{0}=x_{n} $ and $ X_{1}=x_{n+1}
$.
Inserting $ F(x)=-\nabla U(x)$ and using the third formula of \eqref{CSEEP}, one obtains
\begin{equation}
\begin{aligned}\nonumber
&H(x_{n+1},v_{n+1})-H(x_{n},v_{n})
=\frac{1}{2}v_{n}^{\intercal}\big(\varphi_{0}( hK )\big)^{\intercal}\varphi_{0}( hK )v_{n}
+\frac{h}{2}v_{n}^{\intercal}\big(\varphi_{0}( hK )\big)^{\intercal}
\int_{0}^{1}B_{\tau}( hK )F(X_{\tau})d\tau\\
&+\frac{h}{2}\big(\int_{0}^{1}B_{\tau}( hK )F(X_{\tau})d\tau\big)^{\intercal}
\varphi_{0}( hK )v_{n}
+\frac{h^{2}}{2}\big(\int_{0}^{1}B_{\tau}( hK )F(X_{\tau})d\tau\big)^{\intercal}
\int_{0}^{1}B_{\tau}( hK )F(X_{\tau})d\tau\\
&-\int_{0}^{1}F^{\intercal}(X_{\tau})d\Big(x_{n}+ hC_\tau( hK ) v_{n}+h^2 \int_{0}^{1}{A}_{\tau\sigma}( hK )F (X_\sigma)d\sigma\Big)
-\frac{1}{2}v_{n}^{\intercal}v_{n}.
\end{aligned}
\end{equation}
Since $ \widetilde{B} $ is skew-symmetric, one obtains that
$\big(\varphi_{0}( hK )\big)^{\intercal}=\varphi_{0}(- hK )=e^{- hK }$, which yields $ \big(\varphi_{0}( hK )\big)^{\intercal}\varphi_{0}( hK )=I $. Thus, it is arrived that
\begin{equation*}
\begin{aligned}
&H(x_{n+1},v_{n+1})-H(x_{n},v_{n})
=hv_{n}^{\intercal}\varphi_{0}(- hK )\int_{0}^{1}B_{\tau}( hK )F(X_{\tau})d\tau
-h\int_{0}^{1}F^{\intercal}(X_{\tau})C_{\tau}^{'}( hK )v_{n}d\tau\\
&+\frac{h^{2}}{2}\int_{0}^{1}\int_{0}^{1}F^{\intercal}(X_{\tau})B_{\tau}(- hK )
B_{\sigma}( hK )F(X_{\sigma})d\sigma d\tau -h^{2}\int_{0}^{1}\int_{0}^{1}F^{\intercal}(X_{\tau})A_{\tau\sigma}^{'}( hK )F(X_{\sigma})d\sigma d\tau\\
&=\frac{h^{2}}{2}\int_{0}^{1}\int_{0}^{1}F^{\intercal}(X_{\tau})\big(B_{\tau}(- hK )
B_{\sigma}( hK )
-2A_{\tau\sigma}^{'}( hK )\big)F(X_{\sigma})d\sigma d\tau\\
&\ \ \ \ +hv^{\intercal}\int_{0}^{1}\big(\varphi_{0}(- hK )B_{\tau}( hK )
-C_{\tau}^{'}(- hK )\big)F(X_{\tau})d\tau.\\
\end{aligned}
\end{equation*}
From the first equation of \eqref{EPTJ}, we obtain
\begin{equation}
\begin{aligned}\nonumber
&H(x_{n+1},v_{n+1})-H(x_{n},v_{n})
=\frac{h^{2}}{2}\int_{0}^{1}\int_{0}^{1}F^{\intercal}(X_{\tau})\big(B_{\tau}(- hK )B_{\sigma}( hK )
-2A_{\tau\sigma}^{'}( hK )\big)F(X_{\sigma})d\sigma
d\tau.
\end{aligned}
\end{equation}
By letting $ \tau \leftrightarrow \sigma $, the above
formula becomes
\begin{equation}
\begin{aligned}\nonumber
&H(x_{n+1},v_{n+1})-H(x_{n},v_{n})\\
=&\frac{h^{2}}{2}\int_{0}^{1}\int_{0}^{1}F^{\intercal}(X_{\sigma})\big(B_{\sigma}(- hK )B_{\tau}( hK )
-2A_{\sigma
\tau}^{'}( hK )\big)F(X_{\tau})d\sigma
d\tau\\
=&\frac{h^{2}}{2}\int_{0}^{1}\int_{0}^{1}\Big(F^{\intercal}(X_{\sigma})\big(B_{\sigma}(- hK )B_{\tau}( hK )
-2A_{\sigma
\tau}^{'}( hK )\big)F(X_{\tau})\Big)^{\intercal}d\sigma
d\tau\\
=&\frac{h^{2}}{2}\int_{0}^{1}\int_{0}^{1}F^{\intercal}(X_{\tau})\big(B^{\intercal}_{\tau}( hK )B^{\intercal}_{\sigma}(- hK )
-2(A_{\sigma\tau}^{'}( hK ))^{\intercal}\big)F(X_{\sigma})d\sigma
d\tau\\
=&\frac{h^{2}}{2}\int_{0}^{1}\int_{0}^{1}F^{\intercal}(X_{\tau})\big(B_{\tau}(- hK )B_{\sigma}( hK )
-2A_{\sigma\tau}^{'}(- hK )\big)F(X_{\sigma})d\sigma
d\tau.
\end{aligned}
\end{equation}
Adding the above two   results yields
\begin{equation}
\begin{aligned}\nonumber
H(x_{n+1},v_{n+1})-H(x_{n},v_{n})
=\frac{h^{2}}{2}\int_{0}^{1}\int_{0}^{1}F^{\intercal}(X_{\tau})\big(B_{\tau}(- hK )B_{\sigma}( hK )
-A_{\tau\sigma}^{'}( hK )-A_{\sigma\tau}^{'}(- hK )\big)F(X_{\sigma})d\sigma
d\tau.
\end{aligned}
\end{equation}
By the second equation of \eqref{EPTJ}, we have $ H(x_{n+1},v_{n+1})-H(x_{n},v_{n})=0 $. The proof is completed.
 \hfill $\blacksquare$
\end{proof}

 The next theorem considers symmetric conditions of the new integrator \eqref{CSEEP}.
\begin{mytheo}If  the coefficients of the integrator \eqref{CSEEP}
satisfy
\begin{equation}\label{symm}
\begin{aligned}
&\varphi_{1}( hK )B_{\tau}(- hK )
-\bar{B}_{\tau}(- hK )=\bar{B}_{1-\tau}( hK ),\quad \quad\
 \varphi_{1}( hK )-C_{\tau}(- hK )\varphi_{0}( hK )
=C_{1-\tau}( hK ),\\
&\varphi_{1}( hK )B_{\sigma}(- hK )
-\bar{B}_{\sigma}(- hK )-C_{\tau}(- hK )
\varphi_{0}( hK )B_{\sigma}(- hK ) +A_{\tau\sigma}(- hK )=A_{1-\tau,1-\sigma}( hK ),
\end{aligned}
\end{equation}
the integrator   is symmetric, i.e., by exchanging $n+1\leftrightarrow n$ and $h\leftrightarrow -h$, the scheme \eqref{CSEEP} remains the same.
\end{mytheo}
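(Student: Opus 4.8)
The plan is to verify symmetry directly: interchange $n+1\leftrightarrow n$ and $h\leftrightarrow -h$ in the three relations of \eqref{CSEEP}, obtaining the \emph{reversed scheme}, and show that it coincides with \eqref{CSEEP} itself precisely under the hypotheses \eqref{symm}. Because the internal stage is a degree-$s$ polynomial with $X_0=x_n$, $X_1=x_{n+1}$ by \eqref{X01}, the natural candidate for the internal stage of the reversed step (which starts from $x_{n+1}$) is $\widehat{X}_\tau:=X_{1-\tau}$; by uniqueness of the solution of the stage equation (for $h$ small, via a contraction argument), it then suffices to check that $\widehat{X}_\tau=X_{1-\tau}$, together with the reversed update formulas, satisfies the reversed scheme. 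Two ingredients are used repeatedly: the elementary identities $\varphi_0(-z)\varphi_0(z)=1$ and $\varphi_1(-z)\varphi_0(z)=\varphi_1(z)$, immediate from \eqref{phi} and valid for the matrix argument $hK$ by functional calculus; and the auxiliary coefficient relation $\varphi_0(-hK)B_\tau(hK)=B_{1-\tau}(-hK)$, which is \emph{not} an extra hypothesis but a consequence of the first relation of \eqref{symm} (use that relation with $\tau\mapsto 1-\tau$ and with $hK\mapsto -hK$, subtract, and cancel the invertible factor $\varphi_1(hK)$ via $\varphi_1(-z)\varphi_0(z)=\varphi_1(z)$).

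I would then run three checks, one per line of \eqref{CSEEP}. For the $v$-update, the reversed relation $v_n=\varphi_0(-hK)v_{n+1}-h\int_0^1 B_\tau(-hK)F(\widehat{X}_\tau)\,d\tau$, after inserting the forward formula for $v_{n+1}$, using $\varphi_0(-hK)\varphi_0(hK)=I$, and substituting $\tau\mapsto 1-\tau$, reduces exactly to the auxiliary relation above. For the $x$-update, the reversed relation $x_n=x_{n+1}-h\varphi_1(-hK)v_{n+1}+h^2\int_0^1\bar B_\tau(-hK)F(\widehat{X}_\tau)\,d\tau$, after inserting the forward formulas for $x_{n+1}$ and $v_{n+1}$, cancelling the $v_n$-term via $\varphi_1(-hK)\varphi_0(hK)=\varphi_1(hK)$, and reparametrising $\tau\mapsto 1-\tau$, collapses to an identity exactly when the first relation of \eqref{symm} holds (in its form with $hK$ replaced by $-hK$). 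For the internal-stage equation, the reversed relation $\widehat{X}_\tau=x_{n+1}-hC_\tau(-hK)v_{n+1}+h^2\int_0^1 A_{\tau\sigma}(-hK)F(\widehat{X}_\sigma)\,d\sigma$, after the same substitutions and the change of variable $\sigma\mapsto 1-\sigma$, is to be matched with $X_{1-\tau}=x_n+hC_{1-\tau}(hK)v_n+h^2\int_0^1 A_{1-\tau,\sigma}(hK)F(X_\sigma)\,d\sigma$: equating the parts linear in $v_n$ yields exactly the second relation of \eqref{symm}, and equating the coefficients of $F(X_\sigma)$ yields — after removing the $\bar B$-contribution with the first relation of \eqref{symm} and rewriting $B_{1-\sigma}(hK)$ via the auxiliary relation — exactly the third relation of \eqref{symm}. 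Hence the reversed scheme equals \eqref{CSEEP} and the integrator is symmetric.

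The main obstacle is the bookkeeping in the internal-stage check: one must use the coefficient identities as functions of $(\tau,\sigma)$ on all of $[0,1]^2$ (not merely at the fitting nodes $c_i$), so that $\widehat{X}_\tau=X_{1-\tau}$ is a genuine solution of the reversed stage equation, and one must collect cleanly the three contributions that land against $A_{1-\tau,\sigma}(hK)$ after substitution — the piece from $\bar B_\sigma(hK)$ in $x_{n+1}$, the piece from $-C_\tau(-hK)B_\sigma(hK)$ in $v_{n+1}$, and the reparametrised original term $A_{\tau,1-\sigma}(-hK)$ — in order to read off the third condition of \eqref{symm} unambiguously. Deriving the auxiliary relation $\varphi_0(-hK)B_\tau(hK)=B_{1-\tau}(-hK)$ from \eqref{symm} is the only genuinely non-mechanical step; everything else is routine once the two $\varphi$-identities are in hand.
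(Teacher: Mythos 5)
Your proposal is correct and follows essentially the same route as the paper: reverse the scheme, eliminate $v_{n+1}$ and $x_{n+1}$ using $\varphi_0(-hK)\varphi_0(hK)=I$ and $\varphi_1(-hK)\varphi_0(hK)=\varphi_1(hK)$, and match the result against \eqref{CSEEP} with $\tau,\sigma$ replaced by $1-\tau,1-\sigma$. Your explicit derivation of the auxiliary relation $\varphi_0(hK)B_\tau(-hK)=B_{1-\tau}(hK)$ from the first condition of \eqref{symm} (modulo invertibility of $\varphi_1(-hK)$, which holds at the level of formal power series in $hK$) supplies exactly the justification the paper omits when it asserts that the four conditions \eqref{symm6} ``can be simplified'' to the three conditions \eqref{symm}.
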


\begin{proof}
By exchanging $ x_{n+1}\leftrightarrow x_{n} $, $
v_{n+1}\leftrightarrow v_{n} $ and replacing $ h $ by $ -h $ in
the scheme \eqref{CSEEP}, one obtains
\begin{equation}\label{symm1}
\left\{\begin{aligned}
&X_{\tau}^{*}=x_{n+1}-hC_{\tau}(- hK )v_{n+1}
+h^{2}\int_{0}^{1}A_{\tau\sigma}(- hK )F(X_{\sigma}^{*})d\sigma,\\
&x_{n}=x_{n+1}-h\varphi_{1}(- hK )v_{n+1}+h^{2}\int_{0}^{1}\bar{B}_{\tau}(- hK )F(X_{\tau}^{*})d\tau,\\
&v_{n}=\varphi_{0}(- hK )v_{n+1}-h\int_{0}^{1}B_{\tau}(- hK )F(X_{\tau}^{*})d\tau.
\end{aligned}\right.
\end{equation}
It follows from the third formula of \eqref{symm1} that
\begin{equation}\label{symm2}
v_{n+1}=\varphi_{0}( hK )v_{n}
+h\varphi_{0}( hK )\int_{0}^{1}B_{\tau}(- hK )F(X_{\tau}^{*})d\tau.
\end{equation}
Inserting the above result into the second formula of \eqref{symm1} leads to
\begin{equation}\label{symm3}
\begin{aligned}
x_{n+1}
&=x_{n}+h\varphi_{1}(- hK )v_{n+1}-h^{2}\int_{0}^{1}\bar{B}_{\tau}(- hK )F(X_{\tau}^{*})d\tau\\
&=x_{n}+h\varphi_{1}(- hK )\big( \varphi_{0}( hK )v_{n}
+h\varphi_{0}( hK )\int_{0}^{1}B_{\tau}(- hK )F(X_{\tau}^{*})d\tau
\big)
-h^{2}\int_{0}^{1}\bar{B}_{\tau}(- hK )F(X_{\tau}^{*})d\tau\\
&=x_{n}+h^{2}\int_{0}^{1}
\big( \varphi_{1}(- hK )\varphi_{0}( hK )B_{\tau}(- hK )
- \bar{B}_{\tau}(- hK )\big)F(X_{\tau}^{*})d\tau
+h\varphi_{1}(- hK )\varphi_{0}( hK )v_{n}.
\end{aligned}
\end{equation}
Keeping the definition of $ \varphi $-functions \eqref{phi} in mind, we obtain
$ \varphi_{1}(- hK )\varphi_{0}( hK )=\varphi_{1}( hK ). $
Then the formula \eqref{symm3} becomes
\begin{equation}\label{symm4}
\begin{aligned}
x_{n+1}
=x_{n}+h\varphi_{1}( hK )v_{n}+h^{2}\int_{0}^{1}
\big( \varphi_{1}( hK )B_{\tau}(- hK )
- \bar{B}_{\tau}(- hK )\big)F(X_{\tau}^{*})d\tau.
\end{aligned}
\end{equation}
Substituting \eqref{symm2} and \eqref{symm4} into the first formula of \eqref{symm1} implies
\begin{equation}
\begin{aligned}
X_{\tau}^{*}
=&x_{n}
+h^{2}\int_{0}^{1}\big(\varphi_{1}( hK )B_{\sigma}
(- hK )
-C_{\tau}(- hK )\varphi_{0}( hK )B_{\sigma}
(- hK )\\
&-\bar{B}_{\sigma}(- hK )+A_{\tau\sigma}(- hK ) \big)F(X_{\sigma}^{*})d\sigma
+h\big( \varphi_{1}( hK )-C_{\tau}(- hK )\varphi_{0}( hK ) \big)v_{n}.
\end{aligned}\nonumber
\end{equation}
Thus, we have
\begin{equation}\label{symm5}
\left\{\begin{aligned}
&X_{\tau}^{*}
=x_{n}+h\big( \varphi_{1}( hK )-C_{\tau}(- hK )\varphi_{0}( hK ) \big)v_{n}
+h^{2}\int_{0}^{1}\big(\varphi_{1}( hK )
B_{\sigma}(- hK )\\
&\qquad-\bar{B}_{\sigma}(- hK )
-C_{\tau}(- hK )\varphi_{0}( hK )
B_{\sigma}(- hK )+A_{\tau\sigma}(- hK )                                     \big)F(X_{\sigma}^{*})d\sigma,\\
&x_{n+1}=x_{n}+h^{2}\int_{0}^{1}
\big( \varphi_{1}( hK )B_{\tau}(- hK )
- \bar{B}_{\tau}(- hK )\big)F(X_{\tau}^{*})d\tau
+h\varphi_{1}( hK )v_{n},\\
&v_{n+1}=\varphi_{0}( hK )v_{n}+h\varphi_{0}( hK )
\int_{0}^{1}B_{\tau}(- hK )F(X_{\tau}^{*})d\tau.
\end{aligned}\right.
\end{equation}
Replacing all indices $ \tau $ and $ \sigma $ in \eqref{CSEEP} by $ 1-\tau $ and $ 1-\sigma $, respectively. Under the following conditions
\begin{equation}\label{symm6}
\begin{aligned}
&\varphi_{1}( hK )B_{\tau}(- hK )
-\bar{B}_{\tau}(- hK )=\bar{B}_{1-\tau}( hK ),\\
& \varphi_{1}( hK )-C_{\tau}(- hK )\varphi_{0}( hK )
=C_{1-\tau}( hK ), \qquad \varphi_{0}( hK )B_{\tau}(- hK )=B_{1-\tau}( hK ),\\
&\varphi_{1}( hK )B_{\sigma}(- hK )
-\bar{B}_{\sigma}(- hK )-C_{\tau}(- hK )
\varphi_{0}( hK )B_{\sigma}(- hK ) +A_{\tau\sigma}(- hK )=A_{1-\tau,1-\sigma}( hK ),
\end{aligned}
\end{equation}
the scheme \eqref{symm5} and \eqref{CSEEP} are the same. Therefore, the integrator \eqref{CSEEP} is symmetric.\\
It is worth noting that the conditions  \eqref{symm6}  can be simplified as \eqref{symm}.
The proof of this theorem is complete.
 \hfill  $\blacksquare$
\end{proof}

\section{Convergence}\label{sec: err ana}
In this section, we analyse the convergence of the integrator \eqref{CSEEP} and the following theorem states the corresponding result.



\begin{mytheo}\label{order condition}
It is assumed that \eqref{charged-particle sts-cons} has sufficiently smooth solutions, and $ F: \mathbb{R}^{n}\rightarrow \mathbb{R}$ is sufficient differentiable in a strip along the exact solution. Moreover, let $ F $ be locally Lipschitz-continuous, i.e., there exists $ L>0 $ such that
$\norm{F(u(t))-F(\tilde{u}(t))}  \leq L \norm{ u(t)- \tilde{u}(t) }$
for all $ t \in[t_{0}, T]$. Assume that the uniform bound of the coefficients of \eqref{CSEEP}  is $\widetilde{C}$.
Under the above conditions,  if the stepsize $h$ satisfies $ h \leq \sqrt{\frac{1}{2 \widetilde{C} L}}$
and the following $r$th-order conditions are fulfilled
\begin{equation}\label{orderPCSEEP}
\begin{aligned}
&\norm{\int_{0}^{1}B_{\tau}( hK )\frac{\tau^{j}}{j!}d\tau-\varphi_{j+1}( hK )} \leq \alpha_j h^{r-j} ,
\qquad \qquad\qquad\qquad \ \ \  \ j=0,1,\ldots,r-1,\\
&\norm{\int_{0}^{1}\bar{B}_{\tau}( hK )\frac{\tau^{j}}{j!}d\tau-\varphi_{j+2}( hK )}\leq \beta_j  h^{r-1-j},
\qquad \qquad\qquad\quad\ \  \ \  \ \   j=0,1,\ldots,r-2,\\
&\norm{\int_{0}^{1}\int_{0}^{1}A_{\tau \sigma}( hK )\frac{\sigma^{j}}{j!}d\tau d\sigma-\int_{0}^{1}\tau^{j+2}\varphi_{j+2}(\tau hK )d\tau }\leq \gamma_j h^{r-2-j},
\  \ j=0,1,\ldots,r-3, \\
\end{aligned}
\end{equation}
then the convergence of the   integrator \eqref{CSEEP} is given by
\begin{equation*}
\begin{aligned}\norm{x(t_{n})-x_n}\leq CT \exp\big(T(C+hC)\big)h^{m},\ \ \
\norm{v(t_{n})-v_n}\leq CT\exp\big(T(C+hC)\big)h^{m},
\end{aligned}
\end{equation*} where $ \norm{\cdot}  $ denotes the $\mathrm{L}^{\infty}$-norm, and
$C>0$ is a generic constant independent of $\epsilon$ or the time step or $n$  but depends on $\widetilde{C}, L, \alpha_j,\ \beta_j,\ \gamma_j$ and  $\norm{ \frac{d^r}{dt^r}F(x(t)) }$.
Here $ m=min(r,s+1)$ with the positive integer $ s $ given in \eqref{def C} and the positive integer $r$ is determined by \eqref{orderPCSEEP}.
\end{mytheo}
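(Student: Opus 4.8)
The plan is to run a standard consistency-plus-stability argument of Lady Windermere's fan type, comparing the numerical solution with the exact solution written in the variation-of-constants form \eqref{VOC}. First I would introduce the local error: fix the exact values $x(t_n), v(t_n)$ as starting data, perform one step of \eqref{CSEEP}, and measure the discrepancy against \eqref{VOC}. The key observation is that the internal stages $X_\tau$ are polynomials of degree $s$ in $\tau$ interpolating the exact trajectory at the nodes $c_i$, so $X_\tau - x(t_n + h\tau) = O(h^{s+1})$ uniformly; here the bound is uniform in $\epsilon$ because the $\varphi_k(hK)$ are uniformly bounded (they are bounded functions of the skew-symmetric matrix $hK$, with $\|\varphi_k(hK)\|\le 1/k!$). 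Substituting this interpolation estimate together with the quadrature defects \eqref{orderPCSEEP} into the difference of \eqref{CSEEP} and \eqref{VOC}, and Taylor-expanding $F(x(t_n+h\tau))$ in $\tau$ up to the appropriate order, one finds the local errors $\delta_{n+1}^x = O(h^{m+1})$ and $\delta_{n+1}^v = O(h^{m+1})$ with $m=\min(r,s+1)$: the three families of order conditions in \eqref{orderPCSEEP} are precisely calibrated so that the $\varphi_{j+1}, \varphi_{j+2}$ and $\int_0^1 \tau^{j+2}\varphi_{j+2}(\tau hK)\,d\tau$ terms appearing in \eqref{VOC} are matched by the method's coefficients to the required power of $h$, while the leftover terms come from the interpolation remainder and hence carry the factor $h^{s+1}$.

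Next I would address well-definedness of the scheme. The stage equation $X_\tau = x_n + hC_\tau(hK)v_n + h^2\int_0^1 A_{\tau\sigma}(hK)F(X_\sigma)\,d\sigma$ is a fixed-point equation in the (finite-dimensional) space of degree-$s$ polynomials in $\tau$; using the Lipschitz bound on $F$, the uniform bound $\widetilde C$ on the coefficients, and the stepsize restriction $h\le\sqrt{1/(2\widetilde C L)}$, the map is a contraction with constant $\le 1/2$, so $X_\tau$ exists, is unique, and depends Lipschitz-continuously on $(x_n,v_n)$; this also gives the one-step stability of the map $(x_n,v_n)\mapsto(x_{n+1},v_{n+1})$ with a constant of the form $1 + hC$. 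I would record that the Lipschitz constant of one step is $1 + h\widetilde C_1 + h^2\widetilde C_2$ for constants depending only on $\widetilde C$ and $L$, again uniformly in $\epsilon$ since $\varphi_0(hK)$ is orthogonal and the other $\varphi$-factors are uniformly bounded.

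Then the global error follows by the usual telescoping: writing $e_n = (x(t_n)-x_n, v(t_n)-v_n)$, one has $\|e_{n+1}\| \le (1+hC)\|e_n\| + \|\delta_{n+1}\|$ with $\|\delta_{n+1}\|\le C h^{m+1}$, and the discrete Gronwall inequality yields $\|e_n\| \le C h^m \sum_{k=0}^{n-1}(1+hC)^k \le CT\exp(T(C+hC))\,h^m$ as claimed, with $C$ depending on $\widetilde C, L, \alpha_j,\beta_j,\gamma_j$ and $\|\frac{d^r}{dt^r}F(x(t))\|$ but not on $\epsilon$, $h$, or $n$. The main obstacle is the local error estimate: one must carefully organize the Taylor expansion of $F$ along the exact solution against the three quadrature-defect conditions and the degree-$s$ interpolation remainder, keeping track of which terms contribute $h^{r-j}$ (from the quadrature defects after multiplication by the $h^2$ or $h$ prefactors and the $\tau^j/j!$ weights) versus $h^{s+1}$ (from the interpolation error $X_\tau - x(t_n+h\tau)$ propagated through the Lipschitz bound on $F$), and to verify that every constant arising is independent of $\epsilon$ — which rests on the uniform boundedness of the $\varphi$-functions of the skew matrix $hK$ rather than on any bound involving $1/\epsilon$. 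The rest is routine.
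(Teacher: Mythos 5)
Your proposal is correct and follows essentially the same route as the paper: a defect (consistency) analysis obtained by inserting the exact solution written via the variation-of-constants formula into the scheme, with the $h^{s+1}$ contribution coming from the Lagrange-interpolation property of $C_\tau$ and the stage polynomial and the $h^{r}$-type contributions coming from the quadrature-defect conditions \eqref{orderPCSEEP}, followed by a stability estimate in which the stepsize restriction $h\le\sqrt{1/(2\widetilde{C}L)}$ is used to absorb the $h^{2}\widetilde{C}L\norm{E}_{c}$ term (your contraction framing is the same inequality) and a discrete Gronwall argument. The $\epsilon$-uniformity via boundedness of the $\varphi$-functions of the skew-symmetric matrix $hK$ is likewise the mechanism the paper relies on, so there is nothing substantive to add.
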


\begin{proof}
\textbf{(I)}
We first present the local errors  bounds of the method
\eqref{CSEEP}. Inserting the exact solution
\eqref{VOC} into the method \eqref{CSEEP}, we
have
\begin{equation}
\left\{\begin{aligned}\label{PCSEEP1}
&x(t_{n}+\tau h)=x(t_{n})+hC_{\tau}( hK )v(t_{n})+h^{2}\int_{0}^{1}
A_{\tau \sigma}( hK )
\hat{F}(t_{n}+\sigma h)d \sigma +\triangle_{\tau},\\
&x(t_{n+1})=x(t_{n})+h\varphi_{1}( hK )v(t_{n})+h^{2}\int_{0}^{1}
\bar{B}_{\tau}( hK )
\hat{F}(t_{n}+\tau h)d \tau+\rho_{n+1},\\
&v(t_{n+1})=\varphi_{0}( hK )v(t_{n})+h\int_{0}^{1}B_{\tau}( hK )
\hat{F}(t_{n}+\tau h)d\tau+\rho_{n+1}^{'},
\end{aligned}\right.
\end{equation}
where $ \triangle_{\tau} $, $ \rho_{n+1} $, $ \rho_{n+1}^{'} $ present the discrepancies of the method \eqref{CSEEP}, and $ \hat{F}(t)\equiv F(x(t))$.\\
It follows from the   variation-of-constants formula that
\begin{equation*}\label{PCSEEP2}
\begin{aligned}
x(t_{n}+\tau h)=x(t_{n})+\tau h\varphi_{1}(\tau hK )v(t_{n})+h^2\int_{0}^{\tau}(\tau-\sigma)\varphi_{1}\big((\tau-\sigma) hK \big)\hat{F}(t_{n}+h\sigma)d\sigma.\\
\end{aligned}
\end{equation*}
Combining  with the first formula in \eqref{PCSEEP1}, one has
\begin{equation*}
\begin{aligned}
&\triangle_{\tau}=
h\big(\tau\varphi_{1}(\tau hK )-C_{\tau}( hK )\big)v(t_{n})\\
+&
h^{2}\int_{0}^{\tau}(\tau-\sigma)\varphi_{1}\big((\tau-\sigma) hK \big)\hat{F}(t_{n}+h\sigma)d\sigma   -h^{2}\int_{0}^{1}A_{\tau\sigma}( hK )\hat{F}(t_{n}+\sigma h)d\sigma.
\end{aligned}
\end{equation*}
By the condition  \eqref{def C} and the results of Lagrange interpolation, it is arrived that
$h\tau \varphi_{1}(\tau hK )-hC_{\tau}( hK )=\mathcal{O}(h^{s+1}).$
By using Taylor series and the interesting properties of $ \varphi $-functions:
$$ \varphi_{j+1}(z)=-z^{-1}\Big(\frac{1}{j!}-\varphi_{j}(z)\Big),\
\int_{0}^{1}\frac{(1-\tau)\varphi_{1}\big((1-\tau)z\big)\tau^{j}}{j!}d\tau=\varphi_{j+2}(z),
$$
 we obtain
\begin{equation*}
\begin{aligned}
\triangle_{\tau}
&=\mathcal{O}(h^{s+1})+{\tau}^2 h^2\int_{0}^{1}(1-z)\varphi_{1}\big(\tau(1-z) hK \big)\hat{F}(t_{n}+h \tau z)dz
-h^{2}\int_{0}^{1}A_{\tau\sigma}( hK )\hat{F}(t_{n}+\sigma h)d\sigma\\
&=\mathcal{O}(h^{s+1})+\sum\limits_{j=0}^{r-3}h^{j+2}\bigg({\tau}^{j+2}\int_{0}^{1}(1-\sigma)\varphi_{1}
\big(\tau(1-\sigma) hK \big)\frac{{\sigma}^{j}}{j!}d\sigma
-\int_{0}^{1}A_{\tau\sigma}( hK )\frac{{\sigma}^{j}}{j!}d \sigma\bigg)\hat{F}^{j}(t_{n})\\
&=\mathcal{O}(h^{s+1})+\sum\limits_{j=0}^{r-3}h^{j+2}\big({\tau}^{j+2}\varphi_{j+2}(\tau hK )+\mathcal{O}(h^{r})
-\int_{0}^{1}A_{\tau\sigma}( hK )\frac{{\sigma}^{j}}{j!}d \sigma\big)\hat{F}^{j}(t_{n})\mathcal{O}(h^{r}),
\end{aligned}
\end{equation*}
where $ \hat{F}^{(j)}(t) $ denotes the $j$th order derivative of $ F(x(t)) $ with respect to $ t $.\\
In a similar way, one gets
\begin{equation*}
\begin{array}[c]{ll}
&\rho_{n+1}=\sum\limits_{j=0}^{r-2}h^{j+2}\big(\varphi_{j+2}( hK )
-\int_{0}^{1}\bar{B}_{\tau}( hK )\frac{{\tau}^{j}}{j!}d \tau\big)\hat{F}^{j}(t_{n})
+\mathcal{O}(h^{r+1}),\\
&{\rho}^{'}_{n+1}=\sum\limits_{j=0}^{r-1}h^{j+1}\big(\varphi_{j+1}( hK )
-\int_{0}^{1}B_{\tau}( hK )\frac{{\tau}^{j}}{j!}d \tau\big)\hat{F}^{j}(t_{n})
+\mathcal{O}(h^{r+1}).
\end{array}
\end{equation*}
In the light of \eqref{orderPCSEEP}, the following results are true
\begin{equation}\label{PCSEEP3}
\begin{array}[c]{ll}
\norm{\triangle_{\tau}}\leq Ch^{m}, \ \ \ \
\norm{\rho_{n+1}}\leq Ch^{r+1}, \ \ \ \ \ \norm{\rho_{n+1}^{'}}\leq Ch^{r+1}.
\end{array}
\end{equation}

\textbf{(II)} On the basis of the above analysis, in what follows we
  show the global error bounds of the method \eqref{CSEEP}. Let
\begin{equation*}
e_{n}^{x}=x(t_{n})-x_{n},\ \ \ e_{n}^{v}=v(t_{n})-v(t_{n}), \ \ \ e_{n}^{Y}=(e_{n}^{x}, e_{n}^{v})^{\intercal},
\ \ \ E_{\tau}=x(t_{n}+\tau h)-X_{\tau}.
\end{equation*}
Subtracting the formula \eqref{CSEEP} from \eqref{PCSEEP1} yields
\begin{equation}
\left\{\begin{aligned}\label{error}
&E_{\tau}=e_{n}^{x}+\tau h \varphi_1(\tau hK )e_{n}^{v}
+h^2\int_{0}^{1}A_{\tau\sigma}( hK )\big(F\big(x(t_{n}+\sigma h)\big)-F(X_{\sigma})\big)d\sigma+\Delta_{\tau}+\mathcal{O}(h^{s+1}),\\
&e_{n+1}^{x}=e_{n}^{x}+h \varphi_{1}( hK ) e_{n}^{v}
+h^{2}\int_{0}^{1}\bar{B}_{\tau}( hK )\big(F\big(x(t_{n}+\tau h)\big)-F(X_{\tau})\big)d\tau+\rho_{n+1},\\
&e_{n+1}^{v}=\varphi_{0}( hK ) e_{n}^{v}
+h\int_{0}^{1}B_{\tau}( hK )\big(F\big(x(t_{n}+\tau h)\big)-F(X_{\tau})\big)d\tau+\rho^{'}_{n+1},
\end{aligned}\right.
\end{equation}
where the initial conditions are $ e_{0}^{x}=0 $, $ e_{0}^{v}=0 $. It can be observed that here we replace $ C_{\tau}( hK )$
by $\tau\varphi_{1}(\tau hK )$, and this brings the $\mathcal{O}(h^{s+1})$ term in \eqref{error}.
We can express the last two equation of \eqref{error} as\\
\begin{equation}\label{equal1}
\begin{aligned}
e_{n+1}^{Y}&= \left(
                   \begin{array}{ccc}
                     I & h \varphi_1( hK )\\
                     0 & \varphi_0( hK ) \\
                   \end{array}
                 \right)
                 e_{n}^{Y}
                + h\int_{0}^{1}
                 \left(
                   \begin{array}{ccc}
                    h \bar{B}_{\tau}( hK ) & 0\\
                     0 & B_{\tau}( hK ) \\
                   \end{array}
                 \right)
                 \left(
                   \begin{array}{ccc}
                   F\big(x(t_{n}+\tau h)\big)-F(X_{\tau})\\
                     F\big(x(t_{n}+\tau h)\big)-F(X_{\tau}) \\
                   \end{array}
                 \right)d \tau \\
                 & \quad +\left(
                   \begin{array}{ccc}
                   \rho_{n+1} \\
                   \rho^{'}_{n+1} \\
                   \end{array}
                 \right).
 \end{aligned}
  \end{equation}
It is noted that
\begin{equation*}
\begin{aligned}
\left(
                   \begin{array}{ccc}
                     I & h \varphi_1( hK )\\
                     0 & \varphi_0( hK ) \\
                   \end{array}
                 \right)=\left(
                   \begin{array}{ccc}
                     I & 0\\
                     0 & \varphi_0( hK ) \\
                   \end{array}
                 \right)+h\left(
                   \begin{array}{ccc}
                     0 &  \varphi_1( hK )\\
                     0 & 0 \\
                   \end{array}
                 \right).
 \end{aligned}
  \end{equation*}
 Then, we have
\begin{equation*}
\begin{aligned}
\norm{\left(
                   \begin{array}{ccc}
                     I & h \varphi_1( hK )\\
                     0 & \varphi_0( hK ) \\
                   \end{array}
                 \right)}\leq \norm{\left(
                   \begin{array}{ccc}
                     I & 0\\
                     0 & \varphi_0( hK ) \\
                   \end{array}
                 \right)}+h\norm{\left(
                   \begin{array}{ccc}
                     0 &  \varphi_1( hK )\\
                     0 & 0 \\
                   \end{array}
                 \right)}
                 \leq 1+\widetilde{C}h.
 \end{aligned}
  \end{equation*}
Meanwhile, we  note that in the follows $\|\cdot\|_{c}$ denotes the maximum norm for continuous functions which is defined as
\begin{equation}\label{con2}
\norm{E}_{c}=\max \limits_{\tau\in[0,1]} \norm{E_{\tau}},
\end{equation}
for a continuous $\mathbb{R}^{M}$-valued function $E_{\tau}$ on [0,1].
It follows from the first formula of \eqref{error} and \eqref{con2} that
\begin{equation*}\label{con5}
\norm{E_{\tau}} \leq  \norm{e_{n }^{x}}+ \tau h \norm{e_{n }^{v}}+h^2 \widetilde{C} L\norm{E}_{c}+\norm{\Delta_{\tau}}+C h^{s+1},
\end{equation*}
which leads to
\begin{equation*}
\norm{E}_{c} \leq  \norm{e_{n }^{x}}+h \norm{e_{n }^{v}}+h^2 \widetilde{C} L \norm{E}_{c}+\norm{\Delta_{\tau}}_{c}+C h^{s+1}.
\end{equation*}
If the condition
$ h \leq \sqrt{\frac{1}{2 \widetilde{C} L}}$ is satisfied, one has

\begin{equation*}\label{con6}
\norm{E}_{c} \leq  2(\norm{e_{n }^{x}}+h \norm{e_{n }^{v}})+2\norm{\Delta_{\tau}}_{c}+2 C h^{s+1}.
\end{equation*}
Using the definition of the $\mathrm{L}^{\infty}$-norm, we obtain
$$ \norm{e_{n }^{x}}\leq \norm{e_{n }^{Y}},\ \  \norm{e_{n }^{v}}\leq \norm{e_{n }^{Y}}.$$
Moreover, considering the fact that $\norm{ F\big(x(t_{n}+\tau h)\big)-F(X_{\tau})}\leq L \norm{E_{\tau}}$,  one gets
\begin{equation}\label{equal2}
\begin{aligned}
\norm{ F\big(x(t_{n}+\tau h)\big)-F(X_{\tau})}&\leq L \norm{E_{\tau}}\leq L\norm{E}_{c}
\leq 2L(\norm{e_{n}^{x}}+h\norm{e_{n}^{v}})+2\norm{\Delta_{\tau}}_{c}+2Ch^{s+1}\\
&\leq 2L\norm{e_{n}^{Y}}+2Lh\norm{e_{n}^{Y}}+Ch^m.
\end{aligned}
  \end{equation}
By inserting \eqref{equal2} into \eqref{equal1} yields
\begin{equation*}
\begin{aligned}
\norm{e_{n+1}^{Y}}
\leq \norm{e_{n}^{Y}}+Ch\norm{e_{n}^{Y}}+Ch^2\norm{e_{n}^{Y}}
+Ch^{m+1}+Ch^{r+1}.
 \end{aligned}
  \end{equation*}
 Then it is arrived that
\begin{equation*}
\begin{aligned}
\norm{e_{n+1}^{Y}}\leq \norm{e_{n}^{Y}}+h(C+hC)\norm{e_{n}^{Y}}+Ch^{m+1}.
 \end{aligned}
  \end{equation*}
Using Gronwall inequality, it is easy to get
\begin{equation*}
\begin{aligned}
\norm{e_{n+1}^{Y}}\leq CT\exp\big(T(C+hC)\big)h^{m}.
 \end{aligned}
  \end{equation*}
Therefore, we obtain the following estimations
\begin{equation*}
\norm{e_{n}^{x}}\leq CT\exp\big(T(C+hC)\big)h^{m},
\ \ \ \norm{e_{n}^{v}}\leq CT\exp\big(T(C+hC)\big)h^{m}.
\end{equation*}
The conclusion of the theorem is confirmed.\hfill  $\blacksquare$
\end{proof}

\section{Construction of practical integrators}\label{sec:prac meth}
We are now ready to consider the construction of practical methods. In this section, we will propose  second-order and four-order symmetric  continuous-stage exponential energy-preserving  integrators  based on the energy-preserving conditions \eqref{EPTJ}, symmetric conditions \eqref{symm}  and order conditions \eqref{orderPCSEEP}.
\subsection{Second-order integrator}
Let us start with a one-degree method whose coefficients have the
following form
\begin{equation*}\label{one}
 \begin{aligned}  &A_{\tau\sigma}(hK)=\tau a( hK ),\ \ \bar{B}_{\tau}( hK )=\bar{b}(hK),\ \  B_{\tau}(hK)=b( hK ).
\end{aligned}
\end{equation*}
When $s=1$, by the definition \eqref{def C} of $
C_{\tau}( hK ) $, we have $$
C_{\tau}^{'}( hK )=\frac{c_{1}}{c_{1}-c_{2}}
\varphi_{1}(c_{1} hK )+\frac{c_{2}}{c_{2}-c_{1}}\varphi_{1}(c_{2} hK ).$$

Firstly, by the  energy-preserving conditions \eqref{EPTJ}, we
obtain
\begin{equation*}\label{s1ep}
\begin{aligned}[c]
&\varphi_{0}(- hK )b( hK )
=\frac{c_{1}}{c_{1}-c_{2}}\varphi_{1}(-c_{1} hK )+\frac{c_{2}}{c_{2}-c_{1}}\varphi_{1}(-c_{2} hK ),\ \ b(- hK )b( hK )=a( hK )+a(- hK ).
\end{aligned}
\end{equation*}
Solving the above formulas  yields
\begin{equation}\label{barb}
\begin{aligned}[c]
&b( hK )=\frac{1}{2c_{1}-1}
\bigg(c_{1}\varphi_{1}(c_{1} hK )-(1-c_{1})\varphi_{1}\big((1-c_{1}) hK \big)\bigg),\\
&a( hK )=\varphi_{2}\big((2c_{1}-1) hK \big) \ \ \textmd{or} \ \ \ a( hK )=\varphi_{2}\big(-(2c_{1}-1) hK \big).
\end{aligned}
\end{equation}
 Then from the symmetric conditions \eqref{symm}, it follows that
\begin{equation}\label{a}
\begin{aligned}[c]
&\bar{b}( hK )=\frac{1}{2c_{1}-1}
\bigg(c_{1}^{2}\varphi_{2}(c_{1} hK )-(1-c_{1})^{2}\varphi_{2}\big((1-c_{1}) hK \big)\bigg),\ a( hK )=\varphi_{2}\big(-(2c_{1}-1) hK \big).
\end{aligned}
\end{equation}
Letting $ c_{1}=0, c_{2}=1 $ in the formulae \eqref{barb} and \eqref{a}
gives
\begin{equation*}\label{one2}
\begin{array}
[c]{ll}
a( hK )=\varphi_{2}( hK ),\ \ \ \  b( hK )=\varphi_{1}( hK ),\ \ \ \
\bar{b}( hK )=\varphi_{2}( hK ).
\end{array}
\end{equation*}
By the above construction,  it is noted that the following result
holds
$$
A_{1\sigma}( hK )=\varphi_{2}( hK )=\bar{B}_{\sigma}( hK ),$$
which makes the requirement \eqref{X01} be true.\\
Meanwhile, it can be checked easily that the  coefficients $\bar{B}_{\tau}$ and $B_{\tau}$  satisfy the second-order conditions \eqref{orderPCSEEP} with $r=2$. 
Thus, we obtain a  continuous-stage symmetric exponential
energy-preserving integrator of order two with the
following coefficients
\begin{equation*}
 c_{1}=0, \ \ \ c_{2}=1, \ \ \
a( hK )=\varphi_{2}( hK ),\ \ \ b( hK )=\varphi_{1}( hK ),\ \ \
\bar{b}( hK )=\varphi_{2}( hK ).
\end{equation*}
We denote this method by M1-C.
\subsection{Fourth-order integrator}
We now consider two-degree  energy-preserving and symmetric
scheme with the following coefficients:
\begin{equation*}\label{two1}
\begin{aligned}
&A_{\tau\sigma}( hK )
=a_{11}( hK )\tau+a_{12}( hK )\tau\sigma+a_{21}( hK )\tau^{2}
+a_{22}( hK )\tau^{2}\sigma,\\
&B_{\tau}( hK )=b_{1}( hK )+b_{2}( hK )\tau,
 \ \ \quad \bar{B}_{\tau}( hK )=\bar{b}_{1}( hK )+\bar{b}_{2}( hK )\tau.
\end{aligned}
\end{equation*}

In the light of the definition \eqref{def C}  with $ s=2 $, we have
$$ C^{'}_{\tau}( hK )=\frac{2\tau-1}{c_{2}(c_{2}-1)}  c_{2}\varphi_{1}(c_{2} hK )\\
+\frac{2\tau-c_{2}}{1-c_{2}}\varphi_{1}( hK ).$$
According to the energy-preserving conditions \eqref{EPTJ} and the
third formula of symmetric conditions \eqref{symm6}, one arrives at
\begin{equation*}
\begin{aligned}
&\varphi_{0}(- hK )\big(b_{1}( hK )+b_{2}( hK )\tau\big)
=\frac{2\tau-1}{c_{2}(c_{2}-1)}c_{2}\varphi_{1}(-c_{2} hK )+\frac{2\tau-c_{2}}{1-c_{2}}\varphi_{1}(- hK ),\\
&\varphi_{0}(- hK )\big(b_{1}( hK )+b_{2}( hK )\tau\big)
=b_{1}(- hK )+(1-\tau)b_{2}(- hK ).
\end{aligned}
\end{equation*}
Letting $ c_{2}=\frac{1}{2} $ in the above conditions yields
\begin{equation}\label{bandbbar}
\begin{aligned}
b_{1}( hK )=-2\varphi_{1}(hK/2)+3\varphi_{1}( hK ),\ \ \
b_{2}( hK )=4\varphi_{1}(hK/2)-4\varphi_{1}( hK ).
\end{aligned}
\end{equation}
 Then from the second formula of  energy-preserving conditions \eqref{EPTJ}, we obtain
\begin{equation*}
\begin{aligned}
&b_{1}(- hK )b_{1}( hK )
+\sigma b_{1}(- hK )b_{2}( hK )
+\tau b_{1}( hK )b_{2}(- hK )
+\tau\sigma b_{2}(- hK )b_{2}( hK )\nonumber\\
=&\big(a_{11}( hK )+a_{11}(- hK )\big)+\sigma \big(a_{12}( hK )+2 a_{21}(- hK )\big)\nonumber
+\tau \big(a_{12}(- hK )
+2a_{21}( hK )\big)\\
&+2\tau\sigma \big(a_{22}( hK )+
a_{22}(- hK )\big), \nonumber
\end{aligned}
\end{equation*}
which leads to
\begin{equation}
\begin{aligned}\label{tt7}
&b_{1}(- hK )b_{1}( hK )
=a_{11}( hK )+a_{11}(- hK ),\ \
b_{1}( hK )b_{2}(- hK )
=a_{12}(- hK )+2a_{21}( hK ),\\
&b_{2}(- hK )b_{2}( hK )
=2a_{22}( hK )+2a_{22}(- hK ).
\end{aligned}
\end{equation}
Substituting the coefficients $
b_{1}( hK ) $ and $
b_{2}( hK ) $ \eqref{bandbbar} into
\eqref{tt7} gives
\begin{equation*}
\begin{aligned}
a_{11}( hK )=4\varphi_{2}(hK/2)-3\varphi_{2}( hK ),
\ \ \qquad & a_{12}( hK )=-6\varphi_{2}(hK/2)+4\varphi_{2}( hK ),\\
a_{21}( hK )=-5\varphi_{2}(hK/2)+6\varphi_{2}( hK ),
\qquad &
a_{22}( hK )=8\varphi_{2}(hK/2)-8\varphi_{2}( hK ).
\end{aligned}
\end{equation*}
It follows from the first formula of symmetric conditions \eqref{symm} that
\begin{equation*}
\begin{aligned}
&\varphi_{1}( hK )\big(-2\varphi_{1}(-hK/2)+3\varphi_{1}(- hK )
+4\tau\varphi_{1}(-hK/2)-4\tau\varphi_{1}(- hK )\big)\nonumber \\
=&\bar{b}_{1}(- hK )+\tau \bar{b}_{2}(- hK )+\bar{b}_{1}( hK )+(1-\tau)\bar{b}_{2}( hK ).
\end{aligned}
\end{equation*}
After doing some calculations, we arrive at
\begin{equation*}
\begin{aligned}
&\bar{b}_{1}( hK )=-\varphi_{2}(hK/2)+3\varphi_{2}( hK ),\
\ \ \
 \bar{b}_{2}( hK )=2\varphi_{2}(hK/2)-4\varphi_{2}( hK ).
\end{aligned}
\end{equation*}
It is can be checked that
$$ A_{1\sigma}( hK )=-\varphi_{2}(hK/2)+
3\varphi_{2}( hK )+\sigma
\Big(2\varphi_{2}(hK/2)-4\varphi_{2}( hK )\Big)=\bar{B}_{\sigma}( hK ),$$
which yields the condition   \eqref{X01}.

  It can be checked that the coefficients of  this integrator satisfy all the fourth-order conditions   \eqref{orderPCSEEP} with $r=4$ and symmetric conditions \eqref{symm}. Thus this continuous-stage exponential
energy-preserving integrator (denoted by M2-C) is symmetric  and of order four whose  coefficients are summarized as below
\begin{equation*}
\begin{aligned}
&c_{1}=0,\ \ \ c_{2}=\frac{1}{2}, \ \ \ c_{3}=1,\\
&b_{1}( hK )=-2\varphi_{1}(hK/2)+3\varphi_{1}( hK ),
\ \qquad b_{2}( hK )=4\varphi_{1}(hK/2)-4\varphi_{1}( hK ), \\
&\bar{b}_{1}( hK )=-\varphi_{2}(hK/2)+3\varphi_{2}( hK ),
\ \ \ \ \ \ \ \quad \bar{b}_{2}( hK )=2\varphi_{2}(hK/2)-4\varphi_{2}( hK ),\\
&a_{11}( hK )=4\varphi_{2}(hK/2)-3\varphi_{2}( hK ),
\ \ \qquad a_{12}( hK )=-6\varphi_{2}(hK/2)+4\varphi_{2}( hK ),\\
&a_{21}( hK )=-5\varphi_{2}(hK/2)+6\varphi_{2}( hK ),
\qquad a_{22}( hK )=8\varphi_{2}(hK/2)-8\varphi_{2}( hK ).
\end{aligned}
\end{equation*}

\section{Numerical tests}\label{sec:tests}
In this  section, we carry out two   numerical experiments to show
the efficiency of our   two integrators M1-C and M2-C. The methods for comparison  are
chosen as follows:
\begin{itemize}
  \item BORIS: the Boris method of order two presented in \cite{Boris};
  \item AVF: the Averaged Vector Field  method (energy-preserving method) of order two presented in \cite{McLachlan99};
   \item SEP: the  splitting energy-preserving method of order one presented in \cite{WZ}.
     \end{itemize}
For implicit methods, we consider
fixed-point iteration and set the
error tolerance as $ 10^{-16} $ and  the maximum number of
iteration as 100 in each iteration. In order to test the accuracy and the energy conservation, we consider the error: $error:=\frac{|x_{n}-x(t_n)|}{|x(t_n)|}+\frac{|v_{n}-v(t_n)|}{|v(t_n)|}$ and the error of energy $e_{H}:=\frac{|H(x_{n},v_n)-H(x_0,v_0)|}{|H(x_0,v_0)|}$ for all the methods.

\noindent\vskip3mm \noindent\textbf{Problem 1.} As the first
problem, we consider the charged-particle system  \eqref{charged-particle sts-cons} of \cite{Lubich2017} with a constant magnetic field and a additional factor $1/\epsilon$.
We choose   the potential $U(x)=\frac{1}{100\sqrt{x_{1}^2+x_{2}^2}}$, the constant magnetic field
$B=(0,0,1)^{\intercal}$ and the initial values $x(0)=(0,0.2,0.1)^{\intercal}, v(0)=(0.09, 0.05, 0.2)^{\intercal}.$

Firstly, we  solve the problem in $[0,10]$ with $h=1/2^k$ for $k=3,\ldots,7$.
The global errors are shown in Figure
 \ref{fig:problem11}   for different $\epsilon$. Then we solve the problem
with    $h=\frac{1}{100}$ on the interval $ [0,1000]$. The results of energy conservation  are
shown in  Figure \ref{fig:problem12}. Besides the energy, it is shown in \cite{Lubich2017} that this system also has the conservation of the momentum $$
M(x,v)=(v_{1}+A_{1}(x))x_{2}-(v_{2}+A_{2}(x))x_{1},$$
where the $ A(x)$ is given by $ A(x)=-\frac{1}{2}x\times B(x)$.
To show the behaviour of the considered methods in the conservation of this quantity, we integrate this problem on $ [0,1000]$ with    $h=\frac{1}{100}$ and present the relative momentum errors $e_{M}:=\frac{|M(x_{n},v_n)-M(x_0,v_0)|}{|M(x_0,v_0)|}$   in Figure \ref{fig:problem13}.

\begin{figure}[t!]
\centering\tabcolsep=0.4mm
\begin{tabular}
[c]{ccc}%
\includegraphics[width=4.6cm,height=4.6cm]{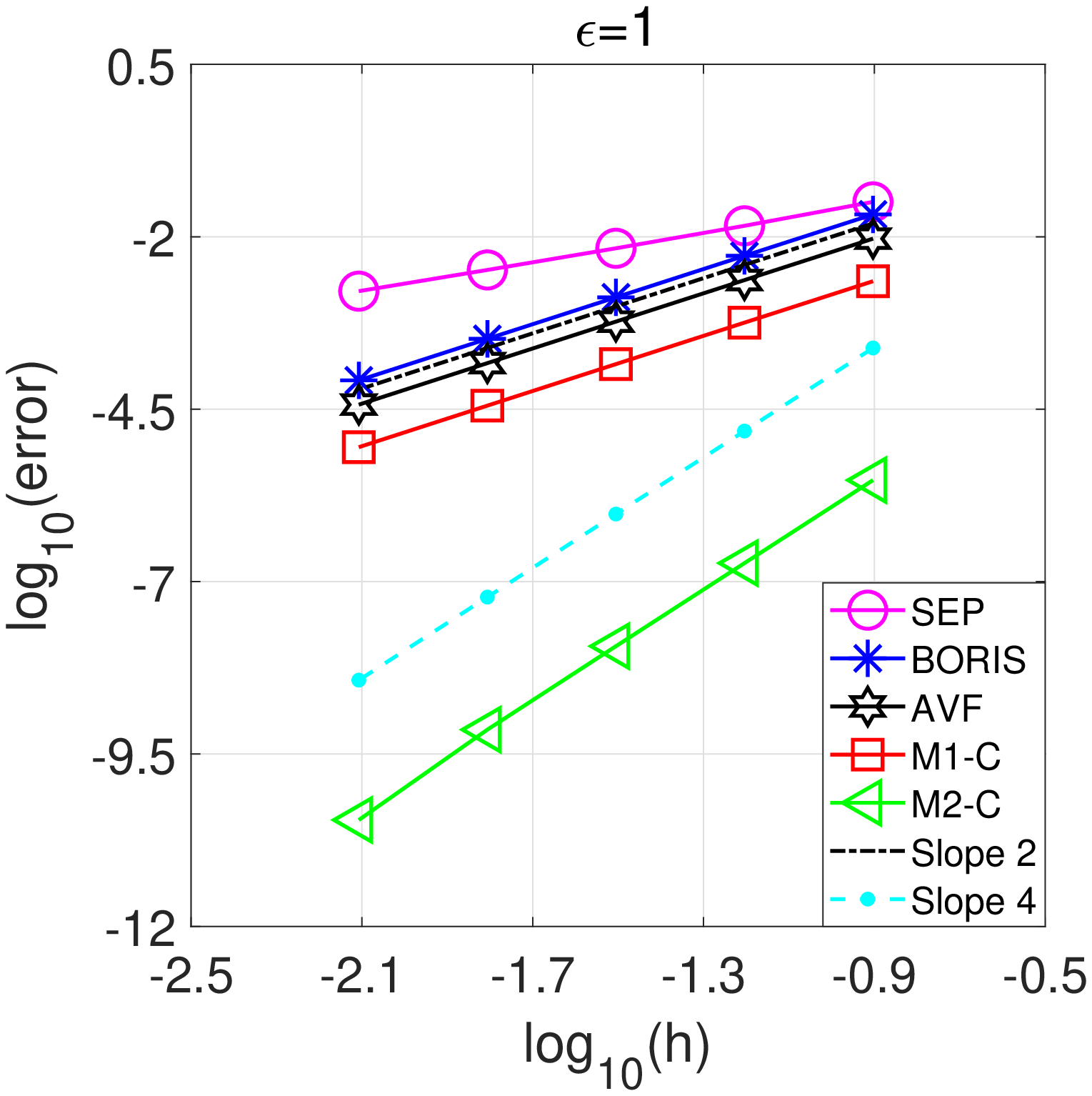} & \includegraphics[width=4.6cm,height=4.6cm]{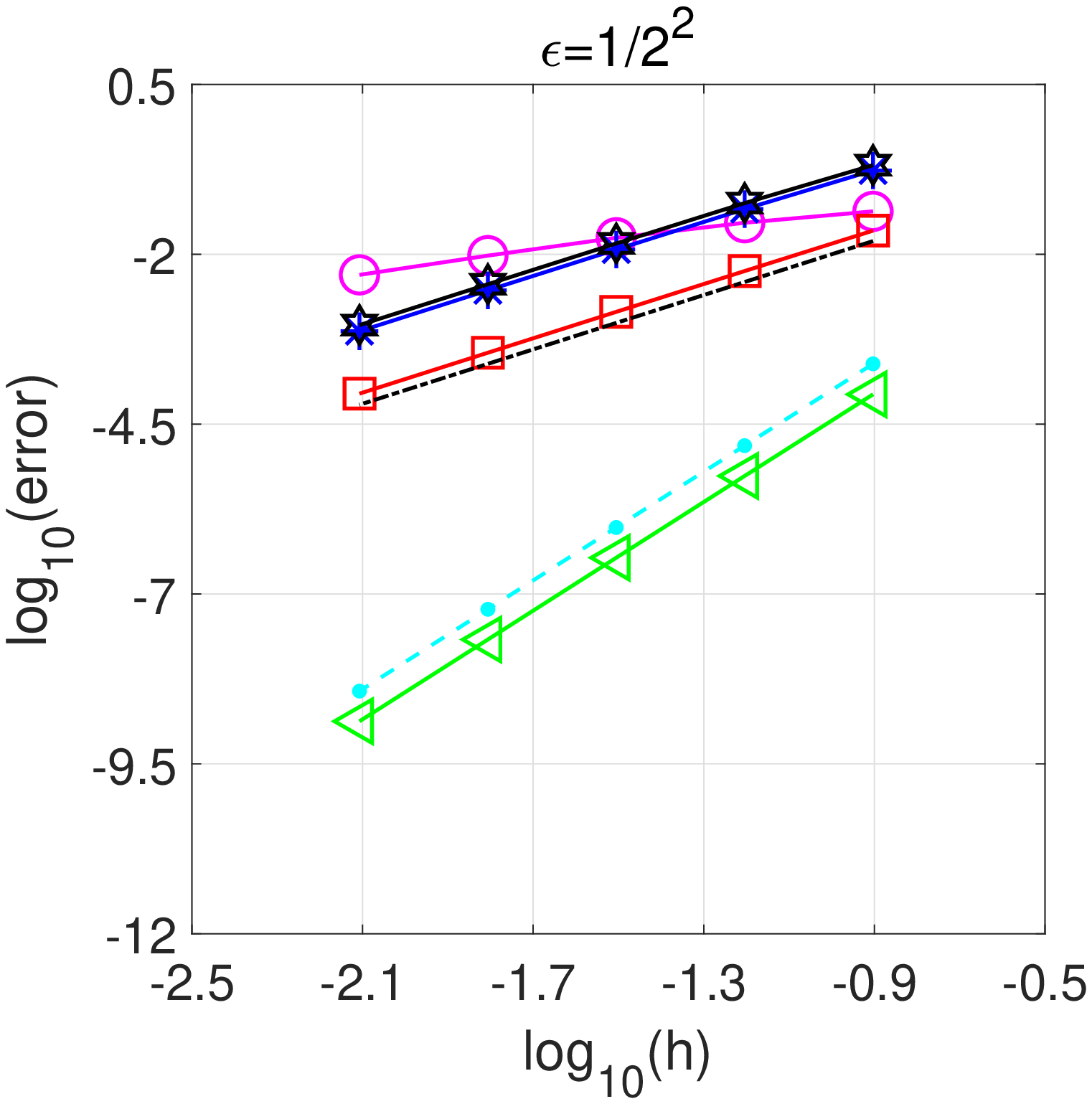}&\includegraphics[width=4.6cm,height=4.6cm]{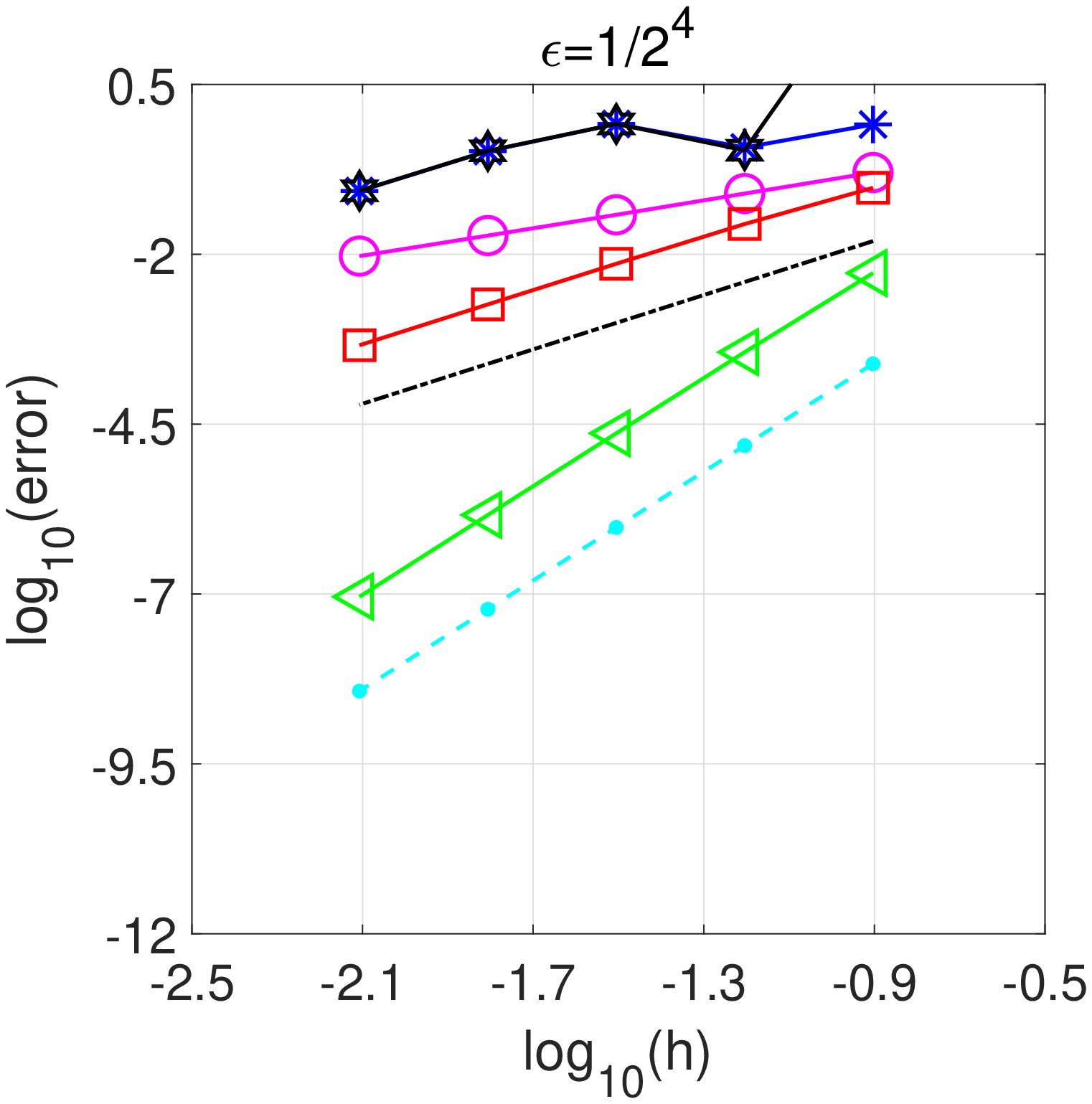}\\
{\small (i)} & {\small (ii)} & {\small (iii)}\\
\end{tabular}
\caption{Problem 1. The global errors $error:=\frac{|x_{n}-x(t_n)|}{|x(t_n)|}+\frac{|v_{n}-v(t_n)|}{|v(t_n)|}$ with $t=10$ and $h=1/2^{k}$ for $k=3,...,7$ under different $\epsilon$. }
\label{fig:problem11}
\end{figure}

\begin{figure}[t!]
\centering\tabcolsep=0.4mm
\begin{tabular}
[c]{ccc}%
\includegraphics[width=4.6cm,height=4.6cm]{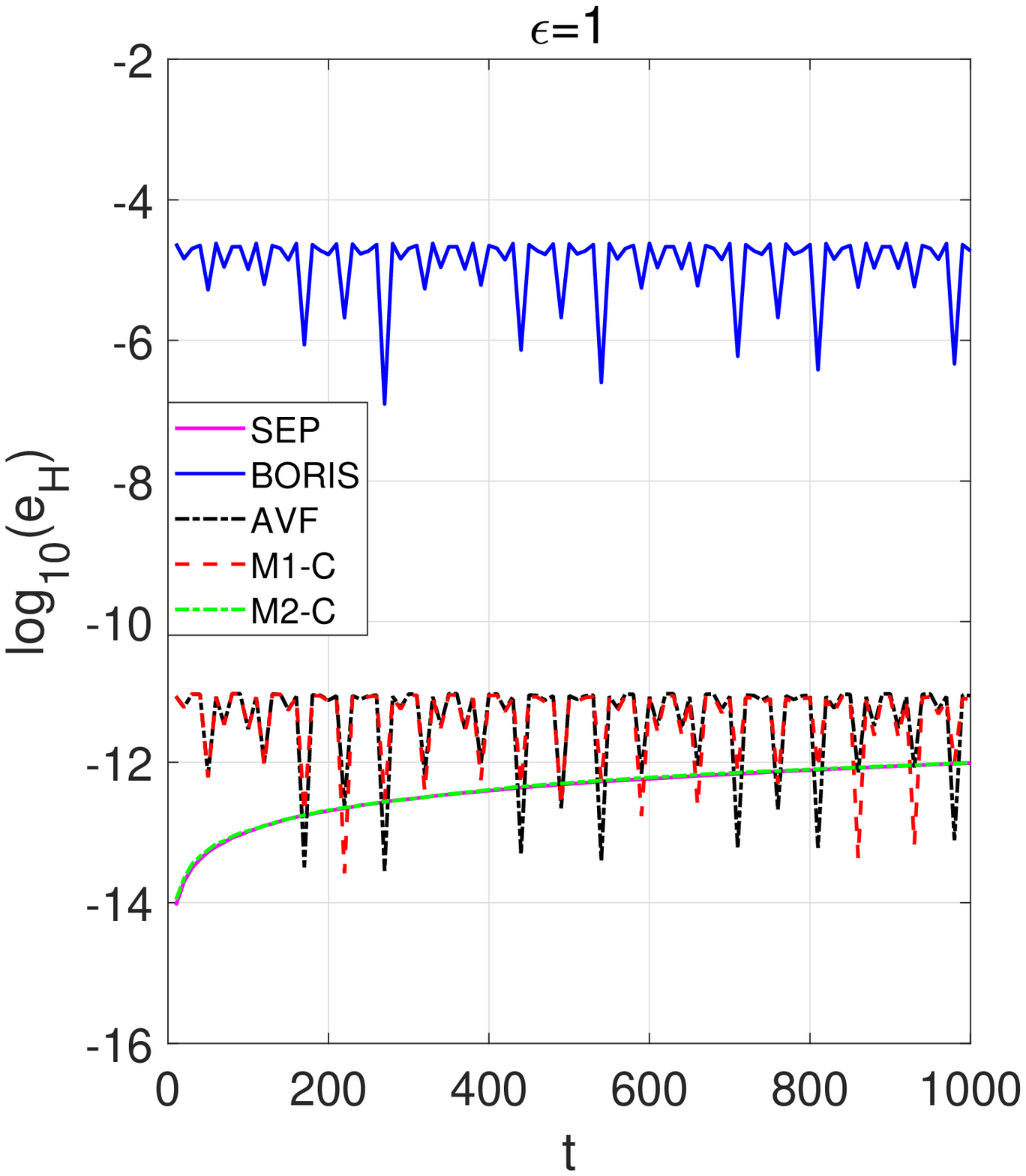} & \includegraphics[width=4.6cm,height=4.6cm]{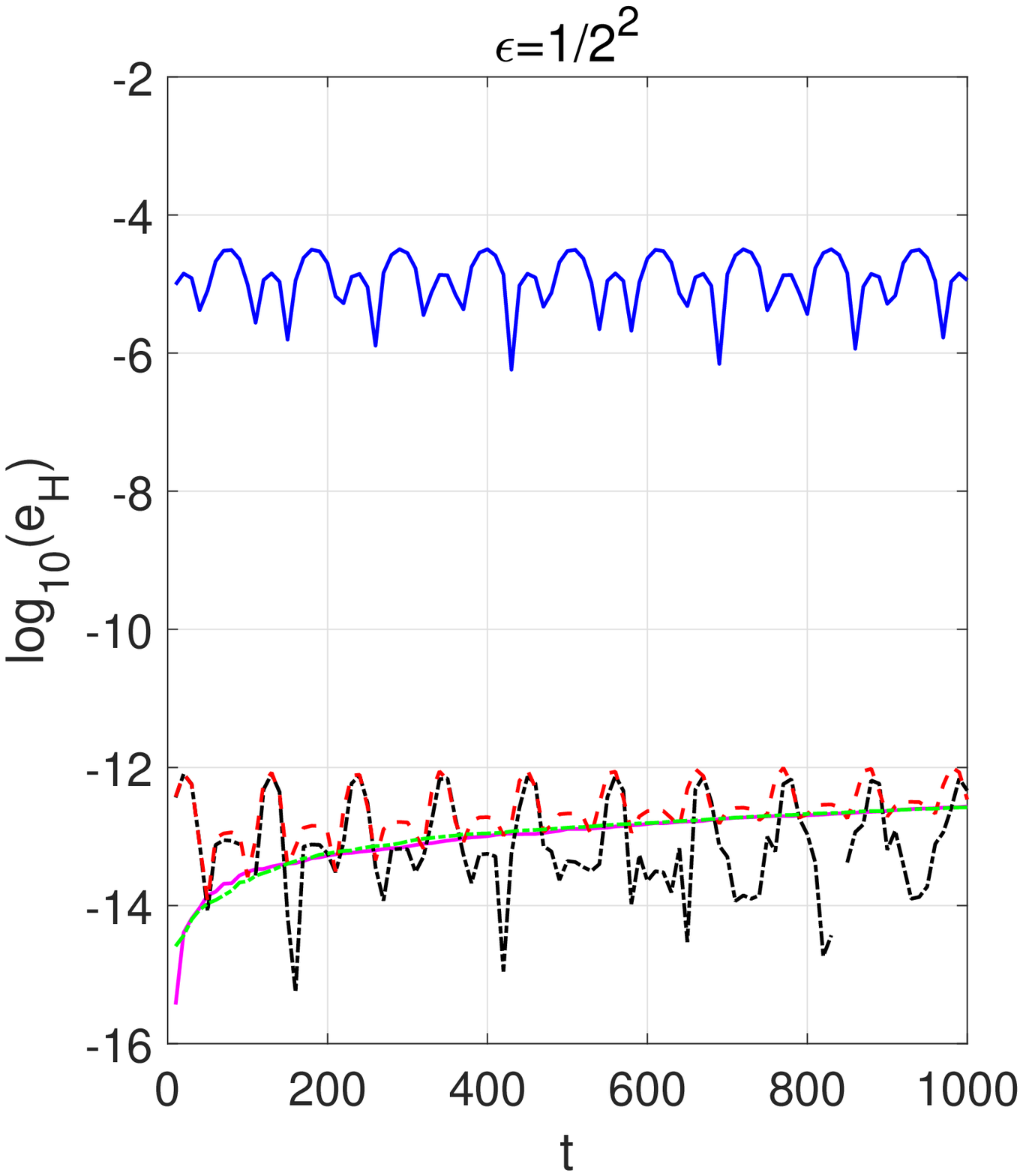} & \includegraphics[width=4.6cm,height=4.6cm]{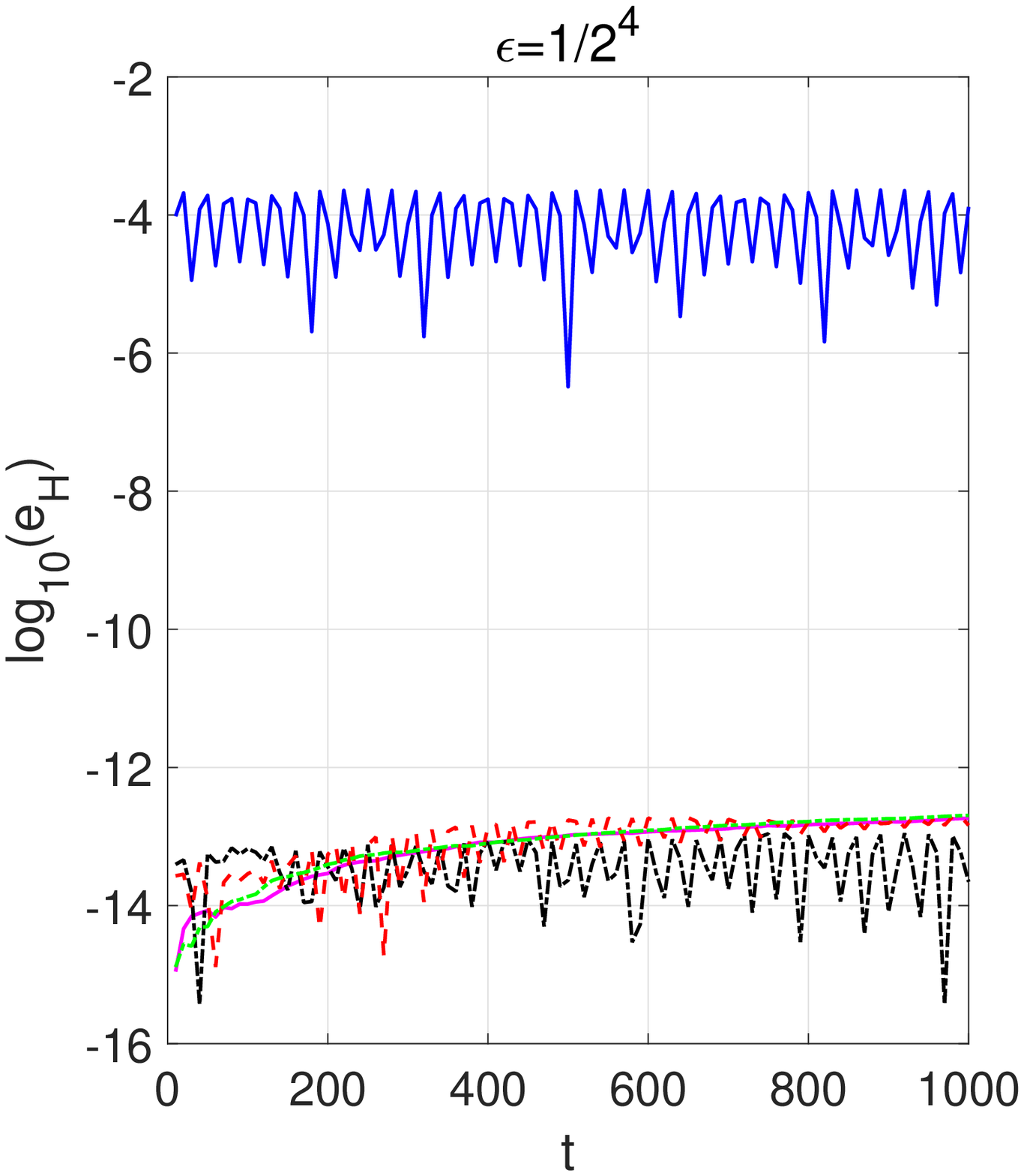}\\
{\small (i)} & {\small (ii)} & {\small (iii)}\\
\end{tabular}
\caption{Problem 1. Evolution of the energy error $e_{H}:=\frac{|H(x_{n},v_n)-H(x_0,v_0)|}{|H(x_0,v_0)|}$ as function of time  $t=nh$. }
\label{fig:problem12}
\end{figure}

\begin{figure}[h!]
\centering\tabcolsep=0.4mm
\begin{tabular}
[c]{ccc}%
\includegraphics[width=4.6cm,height=4.6cm]{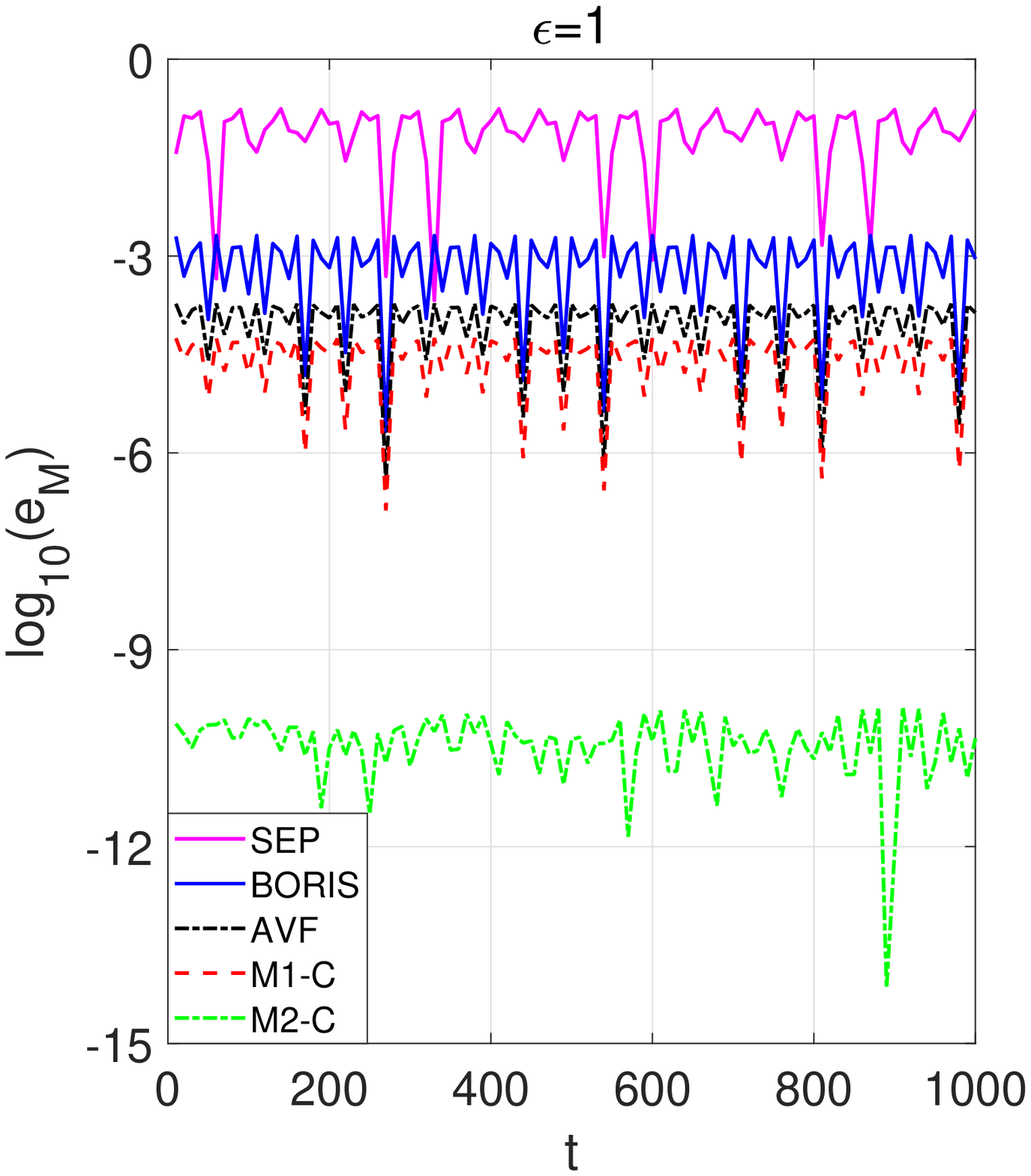} & \includegraphics[width=4.6cm,height=4.6cm]{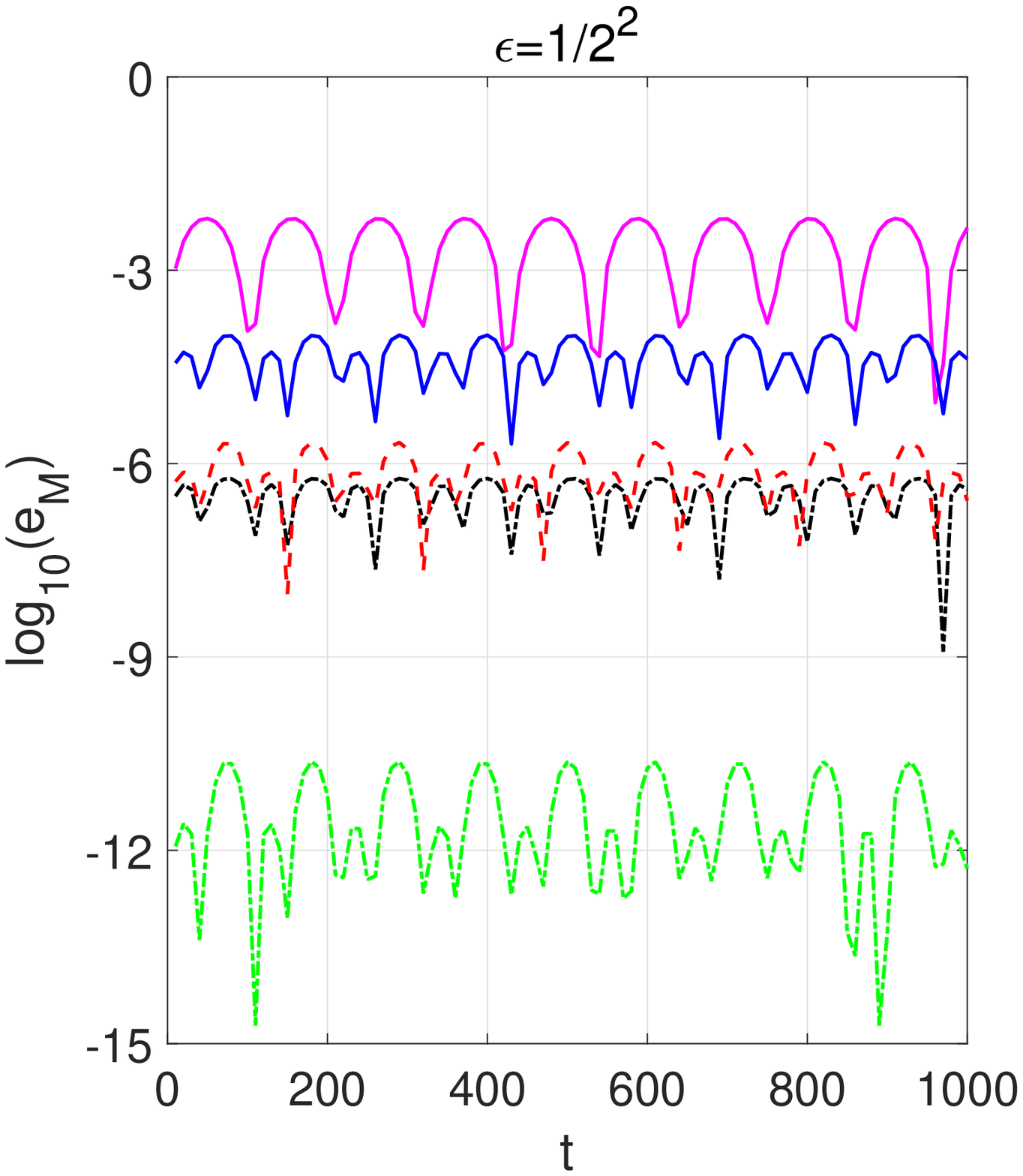}&\includegraphics[width=4.6cm,height=4.6cm]{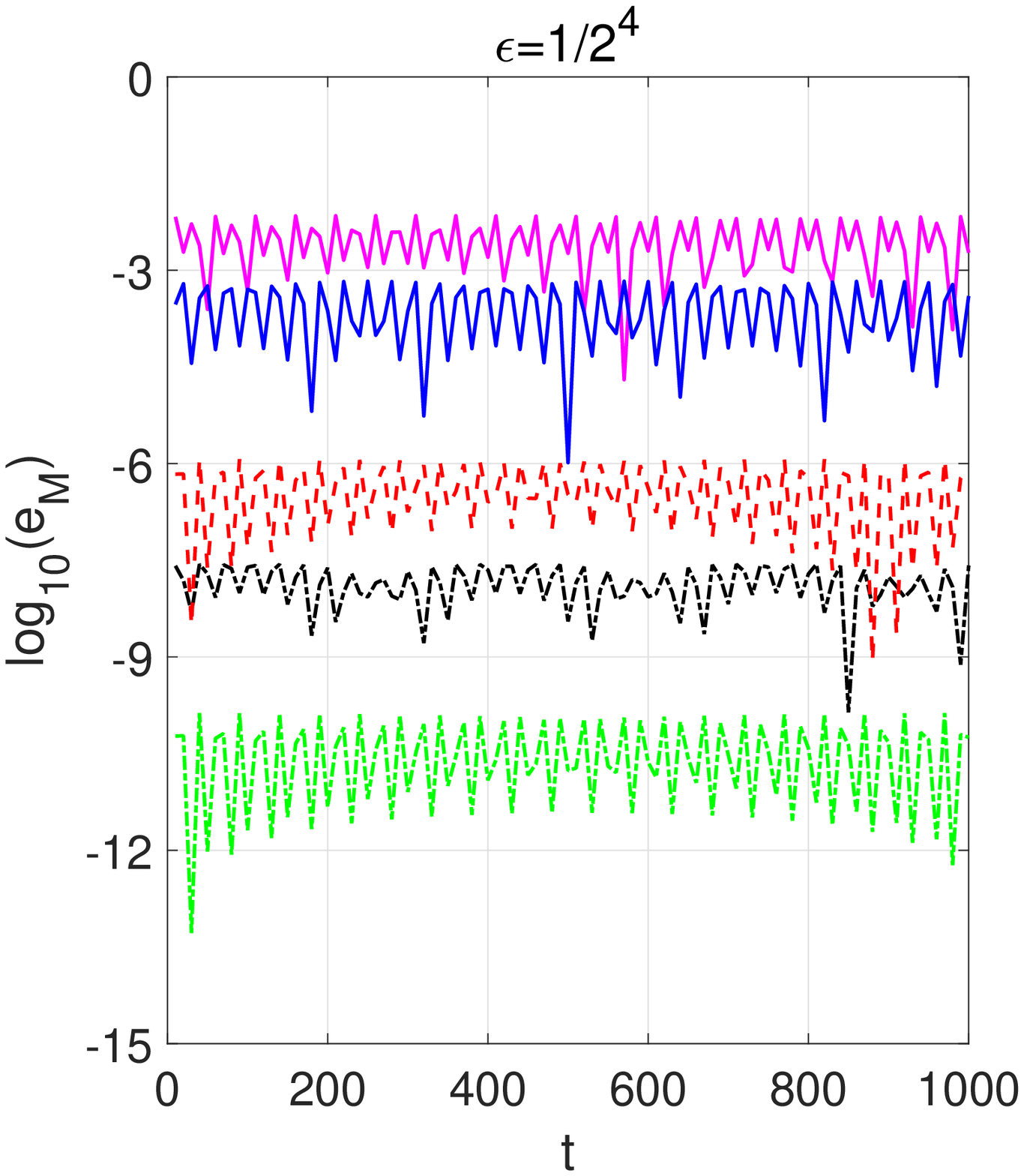}\\
{\small (i)} & {\small (ii)} & {\small (iii)}\\
\end{tabular}
\caption{Problem 1. Evolution of the momentum error $e_{M}:=\frac{|M(x_{n},v_n)-M(x_0,v_0)|}{|M(x_0,v_0)|}$ as function of time  $t=nh$. }
\label{fig:problem13}
\end{figure}

\noindent\vskip3mm \noindent\textbf{Problem 2.}
The second problem  considers the charged-particle dynamics \eqref{charged-particle sts-cons} from \cite{Hairer2018} in the constant magnetic field
 with $ B=\frac{1}{2}(0.9,0.1,1)^{\intercal}$ and the  scalar potential
 $U(x)=x_{1}^{3}-x_{2}^{3}+\frac{1}{5}x_{1}^{4}+x_{2}^{4}+x_{3}^{4}$.
 The initial values are chosen as  $x(0)=(0,1.0,0.1)^{\intercal}, v(0)=(0.09,0.55,0.3)^{\intercal}.$

\begin{figure}[t!]
\centering\tabcolsep=0.4mm
\begin{tabular}
[c]{ccc}%
\includegraphics[width=4.6cm,height=4.6cm]{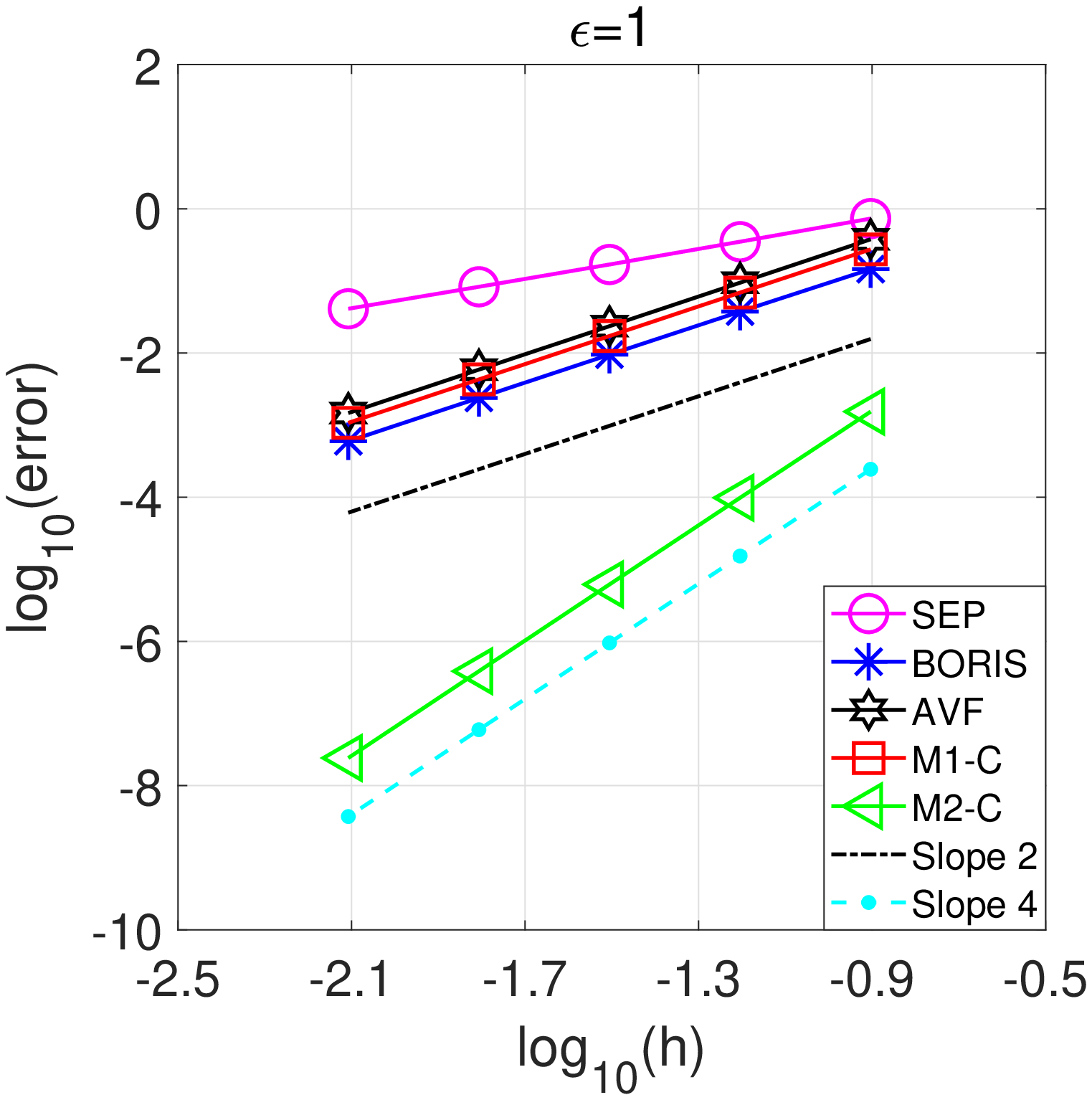} & \includegraphics[width=4.6cm,height=4.6cm]{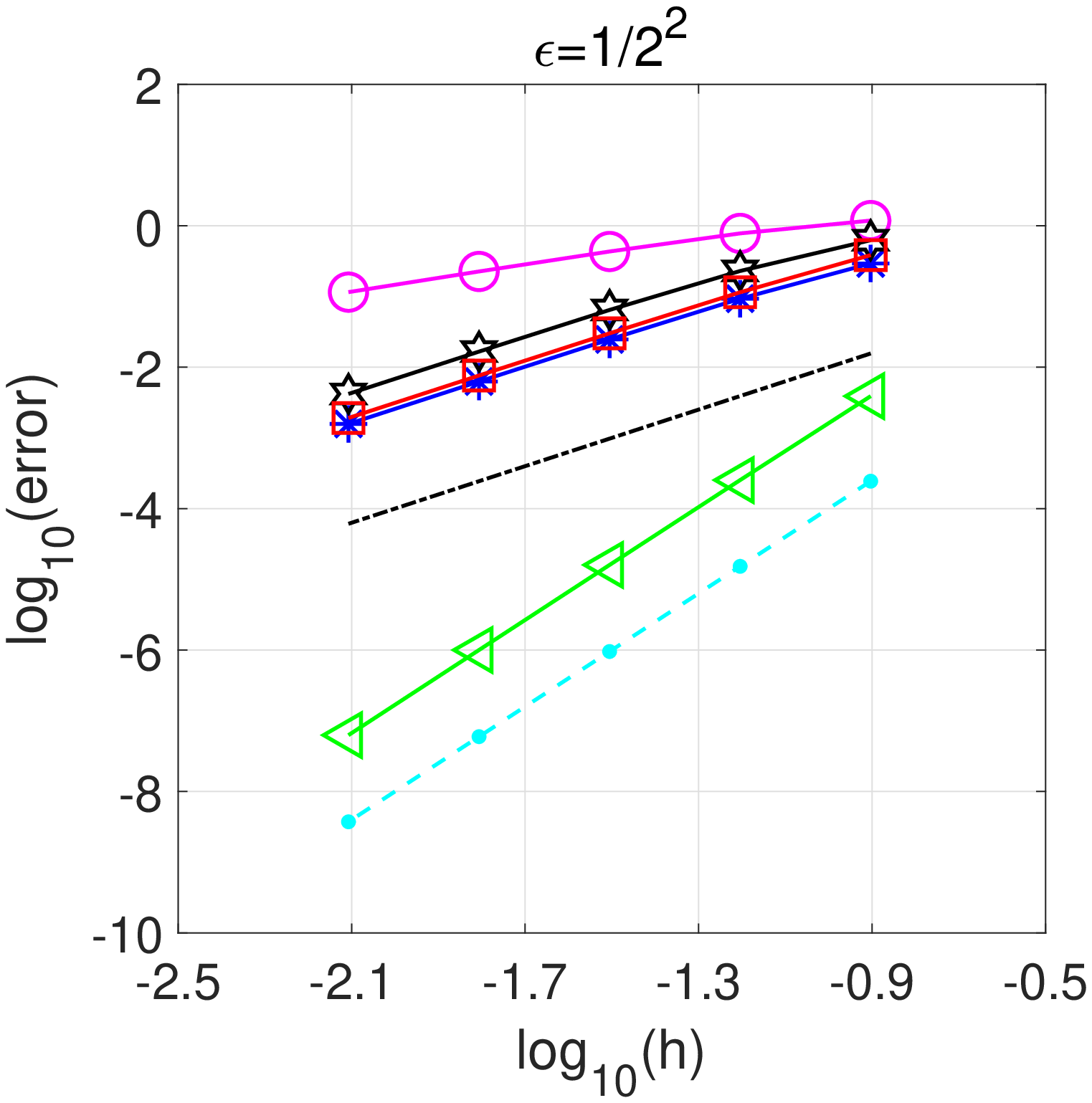} & \includegraphics[width=4.6cm,height=4.6cm]{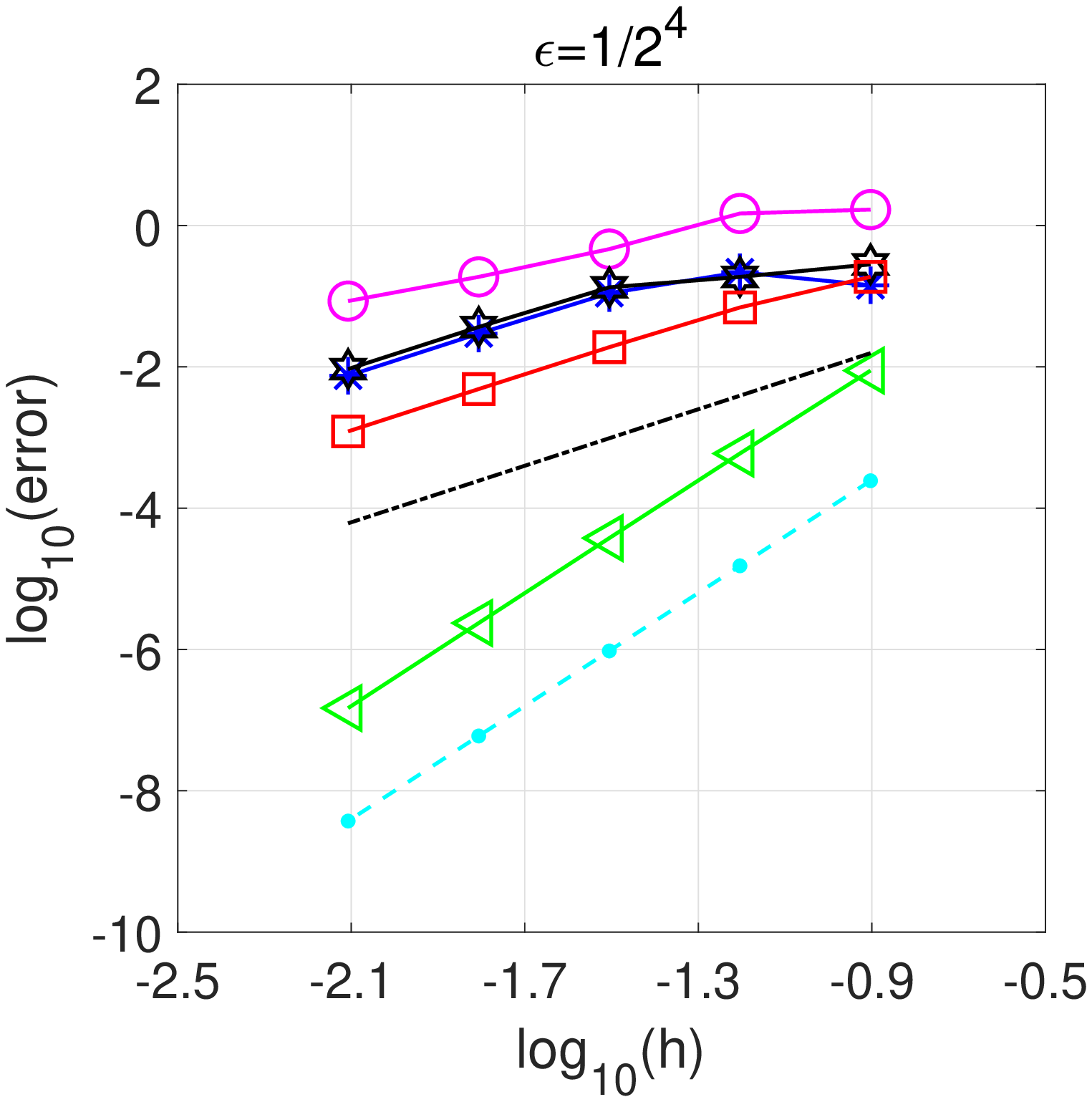}\\
{\small (i)} & {\small (ii)} & {\small (iii)}\\
\end{tabular}
\caption{Problem 2. The global errors $error:=\frac{|x_{n}-x(t_n)|}{|x(t_n)|}+\frac{|v_{n}-v(t_n)|}{|v(t_n)|}$  with $t=10$ and $h=1/2^{k}$ for $k=3,...,7$ under different $\epsilon$. }
\label{fig:problem21}
\end{figure}

\begin{figure}[t!]
\centering\tabcolsep=0.4mm
\begin{tabular}
[c]{ccc}%
\includegraphics[width=4.6cm,height=4.6cm]{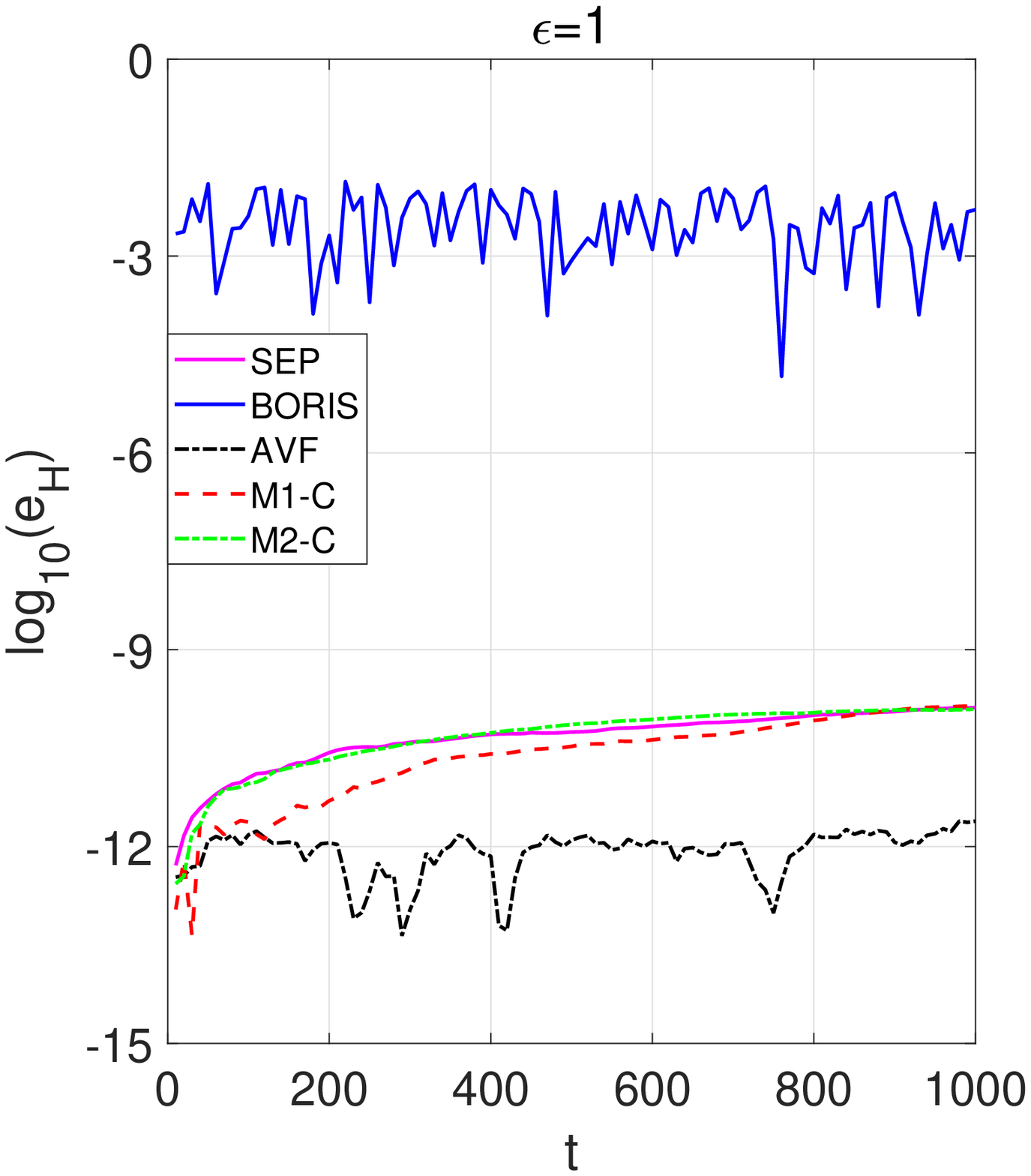} & \includegraphics[width=4.6cm,height=4.6cm]{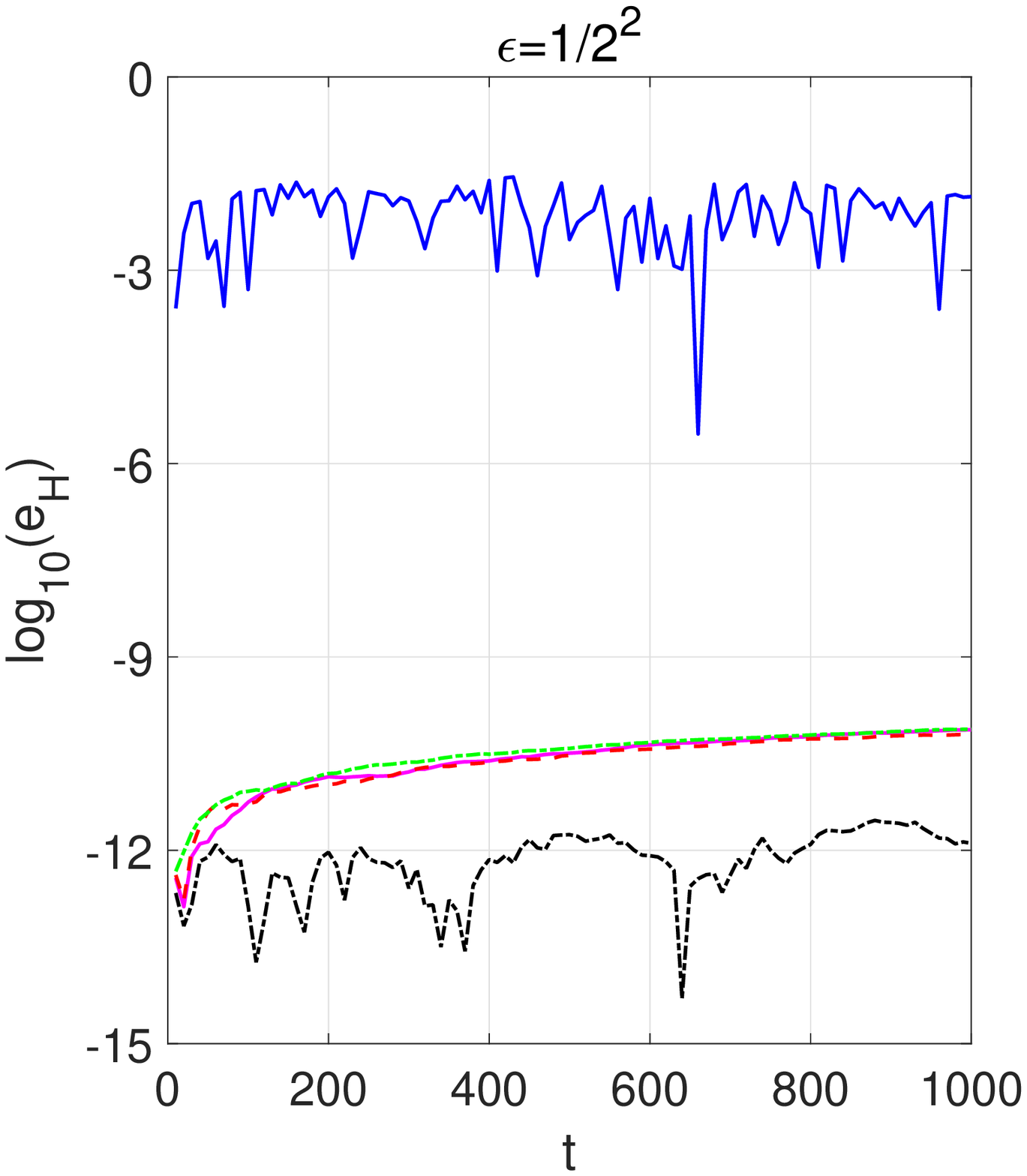} & \includegraphics[width=4.6cm,height=4.6cm]{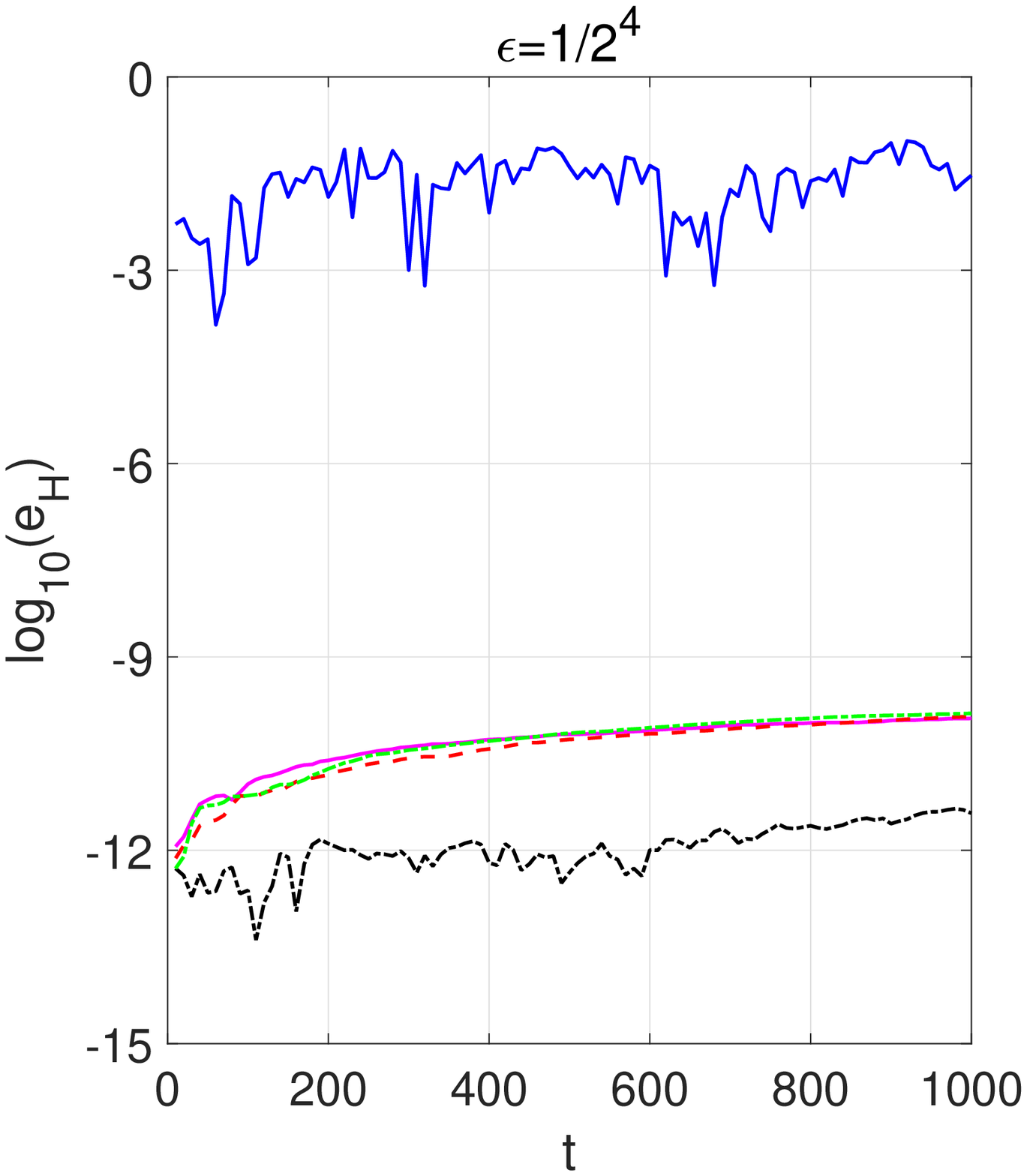}\\
{\small (i)} & {\small (ii)} & {\small (iii)}\\
\end{tabular}
\caption{Problem 2. Evolution of the energy error $e_{H}:=\frac{|H(x_{n},v_n)-H(x_0,v_0)|}{|H(x_0,v_0)|}$ as function of time  $t=nh$.}
\label{fig:problem22}
\end{figure}

\begin{figure}[t!]
\centering\tabcolsep=0.4mm
\begin{tabular}
[c]{ccc}%
\includegraphics[width=4.6cm,height=4.6cm]{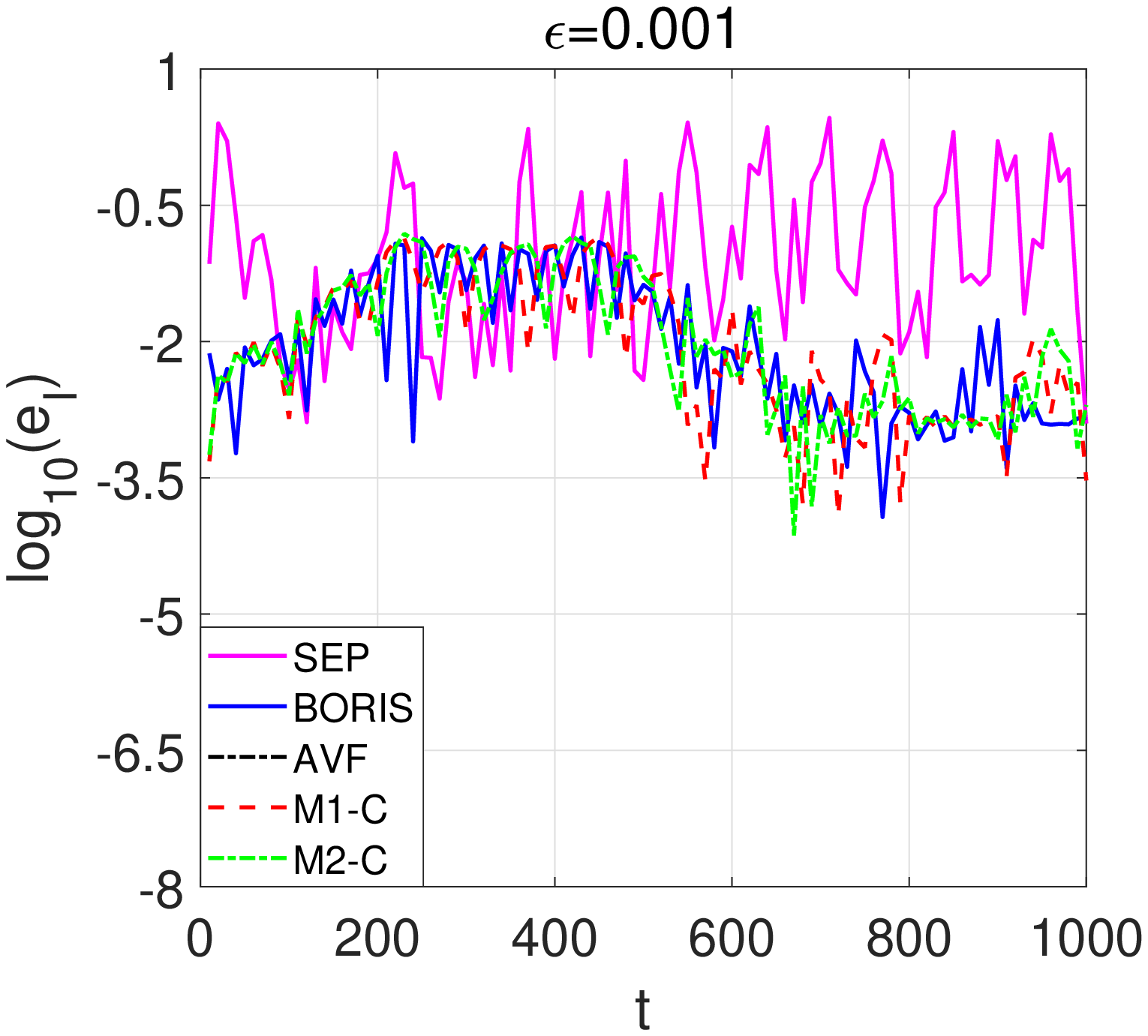} & \includegraphics[width=4.6cm,height=4.6cm]{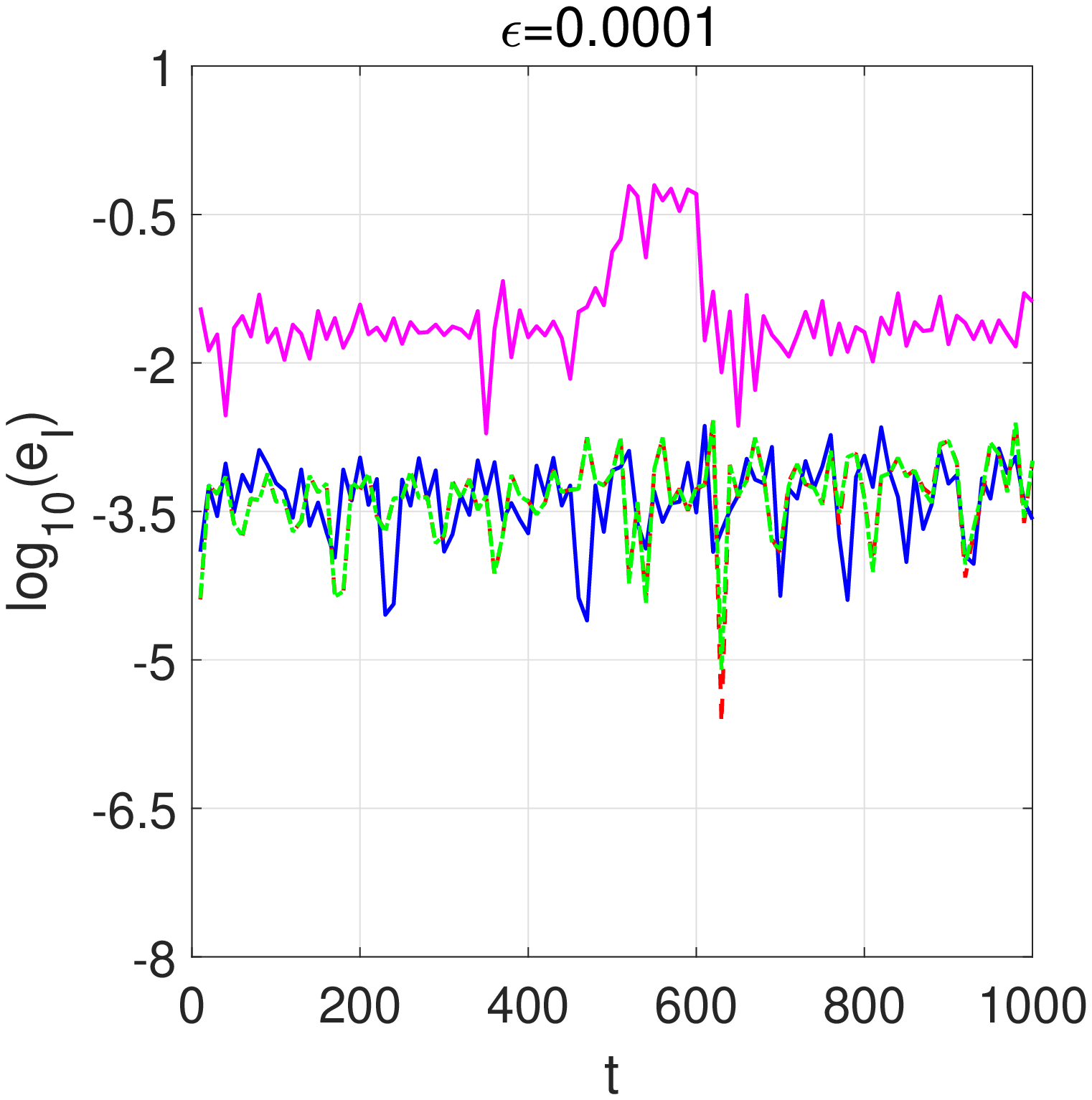} & \includegraphics[width=4.6cm,height=4.6cm]{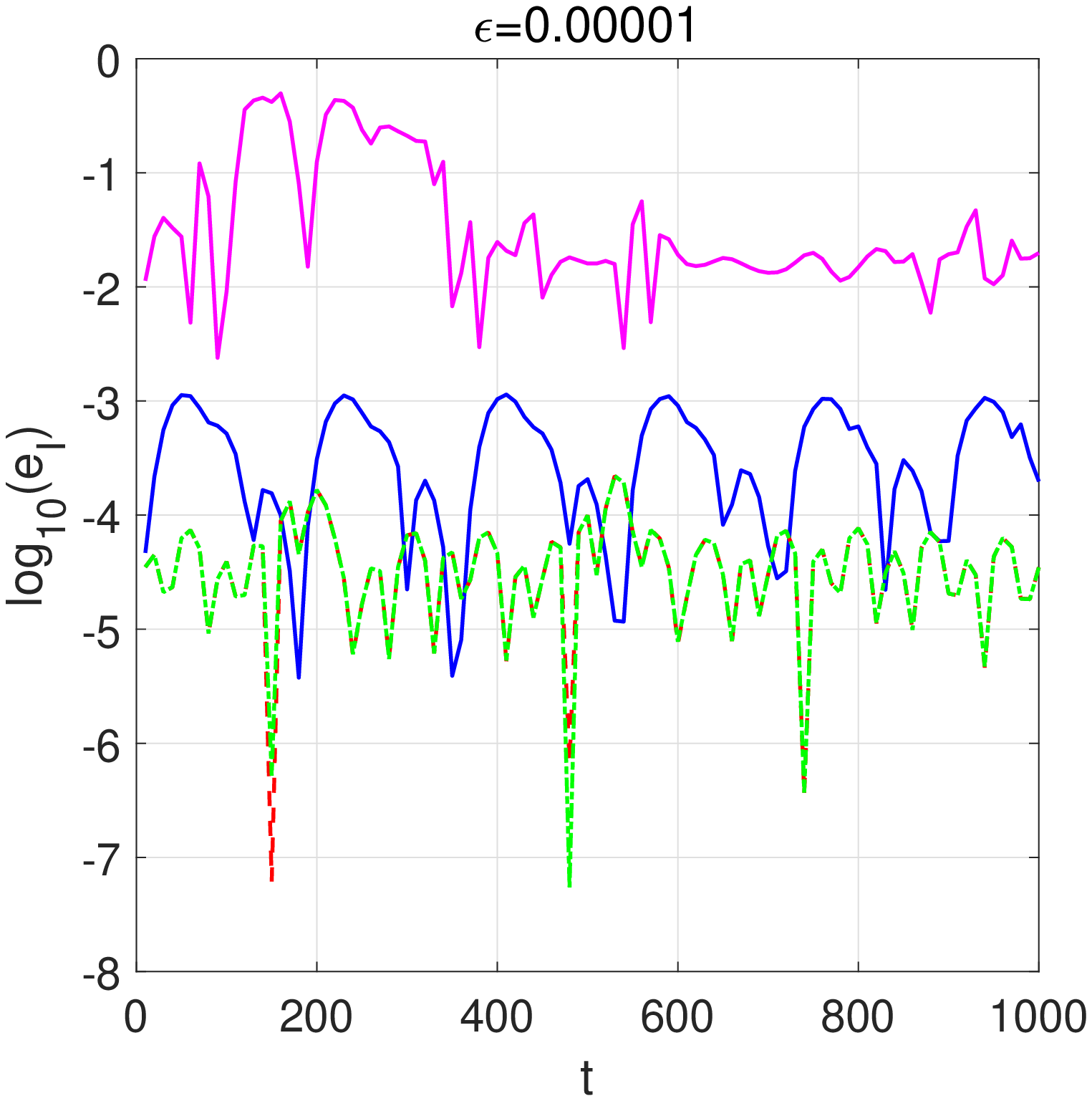}\\
{\small (i)} & {\small (ii)} & {\small (iii)}\\
\end{tabular}
\caption{Problem 2. Evolution of the magnetic moment error $e_{I}:=\frac{|I(x_{n},v_n)-I(x_0,v_0)|}{|I(x_0,v_0)|}$ as function of time $t=nh$.}
\label{fig:problem23}
\end{figure}

The problem  is solved on $[0,10]$ with different   $\epsilon$  and $h=1/2^{k}$, where $k=3,...,7$. See Figure \ref{fig:problem21} for the global errors.  Then we integrate the system with $h=\frac{1}{100}$ on $[0,1000]$. Figure
\ref{fig:problem22} presents the energy conservation. Besides, we consider the magnetic moment $$
I(x,v)=\frac{\frac{1}{2}|\dot{x}\times B(x)|^2}{|B(x)|^3},$$ which is an adiabatic invariant of the system \cite{Arnold97,Benettin94,Hairer2018}.
Its errors $e_{I}:=\frac{|I(x_{n},v_n)-I(x_0,v_0)|}{|I(x_0,v_0)|}$ with $h=\frac{1}{100}$ on $[0,1000]$ are
shown in Figure \ref{fig:problem23}.

From the above two numerical results in Figures \ref{fig:problem11}-\ref{fig:problem23}, we can draw the following conclusions.

1) From Figure \ref{fig:problem11} and Figure \ref{fig:problem21}, we can see the global error lines of our methods M1-C and M2-C are respectively nearly parallel to the lines with slope 2 and slope 4, which shows that our methods M1-C and M2-C are  second order and fourth order, respectively. Moreover, it can be seen that our methods M1-C and M2-C  have better accuracy than the methods Boris, AVF and SEP in the literature.

2) The results in  Figure \ref{fig:problem12} and Figure \ref{fig:problem22} are shown  that the energy-preserving methods AVF, M1-C, M2-C  and SEP have an excellent energy conservation. The boris method does not have such conservation.

3) For the magnetic moment conservation, it can be observed  from  Figure \ref{fig:problem13} that all the methods have a long-term conservation behaviour, and M2-C behaves much better than the others.

4) It can be observed from the  results in    Figure \ref{fig:problem23}  that M1-C and M2-C have a long-term  magnetic moment conservation when $\epsilon$ is small. Other methods do not show this near conservation and since the error of AVF is too large, we do not plot the corresponding results in the figure.

\section{Extension to CPD in a nonuniform magnetic field}\label{sec:new}
In this section, we shall extend the obtained integrators to solve the CPD  \eqref{charged-particle
sts-cons} in a nonuniform magnetic field
 $B(x)=(B_1(x),B_2(x),B_3(x))^{\intercal}$, where $B_i(x): \RR^3\rightarrow \RR$ for $i=1,2,3.$
 \subsection{Integrators and their properties}
\begin{algo} \label{alg:EPS}
		For solving the CPD
\eqref{charged-particle sts-cons}  in a nonuniform  magnetic field $B (x)$, define the following continuous-stage   exponential
  integrator
\begin{equation}\label{CSEEP-B}
\left\{\begin{aligned} &X_{\tau}=x_{n}+ h\tau \varphi_{1}( h\widetilde{K}_n )  v_{n}+h^2 \int_{0}^{1}\tau \varphi_{2}( h\widetilde{K}_n ) F (X_\sigma)d\sigma,\ \ \ 0\leq\tau\leq1,\\
&x_{n+1}=x_{n}+ h\varphi_1( h\widetilde{K}_n ) v_{n}+h^2
\int_{0}^{1}\varphi_{2}( h\widetilde{K}_n )F (X_\tau)d\tau,\\
&v_{n+1}=\varphi_0( h\widetilde{K}_n )v_{n}+h
\int_{0}^{1}\varphi_{1}( h\widetilde{K}_n )F
(X_\tau)d\tau,
\end{aligned}\right.
\end{equation}
where $h$ is the stepsize and $\widetilde{K}_n=\frac{1}{\epsilon}\widetilde{B}(\frac{x_{n}+x_{n+1}}{2})$. We shall refer to this integrator by M1-B.
		
		Based on M1-B denoted by $\Phi_{h}$, we can obtain a Triple Jump splitting scheme \cite{2006Geometric}: $	\Psi_h=\Phi_{a_1h}\circ \Phi_{a_2h}\circ\Phi_{a_3h}$,
		where  $a_1=a_3=\frac{1}{2-\sqrt[3]{2}}$ and $a_2=-\frac{\sqrt[3]{2}}{2-\sqrt[3]{2}}$. We shall call this integrator M2-B.
	\end{algo}

For these two new integrators, they have the following propositions.
\begin{prop}$\bullet$  Both M1-B and M2-B are symmetric and exactly preserve the energy \eqref{energy of cha} of CPD, i.e., $$H(x_n,v_n)=H(x_0,v_0)\ \ \ \textmd{for}\ \ n=1,2,\ldots,T/h.$$
$\bullet$  The integrator M1-B is of order two and M2-B is of order four.
	\end{prop}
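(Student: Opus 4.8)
The plan is to read M1-B as the second-order method M1-C of Section~\ref{sec:prac meth} with the constant matrix $hK$ replaced throughout by the state-dependent but step-wise \emph{frozen} matrix $h\widetilde K_n=\tfrac{h}{\epsilon}\widetilde B\big(\tfrac{x_n+x_{n+1}}{2}\big)$, and to read M2-B as the Triple-Jump composition of M1-B; all four assertions then reduce to transporting the uniform-field results of Section~\ref{sec: methods}, the convergence Theorem~\ref{order condition}, and a classical composition theorem. For the energy conservation of M1-B, I would note three facts: (i) $\widetilde B(y)$ is skew-symmetric for every $y\in\RR^3$, hence $\widetilde K_n$ is skew-symmetric and $\varphi_0(h\widetilde K_n)^{\intercal}\varphi_0(h\widetilde K_n)=I$, $\varphi_0(-h\widetilde K_n)=\varphi_0(h\widetilde K_n)^{\intercal}$, exactly as in the uniform case; (ii) $X_0=x_n$ and $X_1=x_{n+1}$ hold by construction (put $\tau=1$ in the first line of (\ref{CSEEP-B}) and compare with the second line); (iii) the coefficient functions of M1-B, namely $C_\tau(z)=\tau\varphi_1(z)$, $A_{\tau\sigma}(z)=\tau\varphi_2(z)$, $\bar B_\tau(z)=\varphi_2(z)$, $B_\tau(z)=\varphi_1(z)$, are precisely those of M1-C and therefore satisfy the energy-preserving conditions (\ref{EPTJ}) for every matrix argument. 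The computation in the proof of the energy-conservation theorem of Section~\ref{sec: methods} uses only these three facts, so it carries over verbatim with $hK$ replaced by the fixed matrix $h\widetilde K_n$, giving $H(x_{n+1},v_{n+1})=H(x_n,v_n)$.

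For symmetry of M1-B, the key observation is that under the exchange $n+1\leftrightarrow n$ together with $h\leftrightarrow -h$ the midpoint $\tfrac{x_n+x_{n+1}}{2}$ is unchanged, so $h\widetilde K_n\mapsto -h\widetilde K_n$; hence the argument of the symmetry theorem of Section~\ref{sec: methods} applies verbatim, using that the M1-C coefficient functions satisfy the symmetric conditions (\ref{symm}). Thus M1-B is symmetric. For M2-B $=\Phi_{a_1h}\circ\Phi_{a_2h}\circ\Phi_{a_3h}$, with $\Phi_h$ denoting M1-B: each factor $\Phi_{a_ih}$ preserves $H$ exactly (M1-B is energy-preserving for an arbitrary step size, including the negative one $a_2h$), so the composition preserves $H$; and since $\Phi$ is symmetric we have $\Phi_{ah}^{-1}=\Phi_{-ah}$, so the palindromic choice $a_1=a_3$ yields $\Psi_h^{-1}=\Phi_{-a_3h}\circ\Phi_{-a_2h}\circ\Phi_{-a_1h}=\Psi_{-h}$, i.e. M2-B is symmetric.

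For the orders, I would run the local-error-plus-Gronwall scheme of the proof of Theorem~\ref{order condition}: insert the exact solution, written by the variation-of-constants formula, into (\ref{CSEEP-B}); the M1-C coefficient functions satisfy the order conditions (\ref{orderPCSEEP}) with $r=2$ (noted in Section~\ref{sec:prac meth}), and the only genuinely new ingredient is the discrepancy between the frozen matrix $h\widetilde K_n$ and the true matrix $\tfrac{h}{\epsilon}\widetilde B(x(t))$ along the step. Since $\tfrac{x_n+x_{n+1}}{2}$ agrees with $x(t_n+h/2)$ up to $\mathcal O(h^2)$, and the deviation of $x(t)$ from $x(t_n+h/2)$ is odd in $t-t_n-h/2$ to leading order so that its symmetric contribution cancels, this discrepancy feeds only an $\mathcal O(h^3)$ term into the local errors of position and velocity; Gronwall then gives global order two for M1-B. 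For M2-B, since M1-B is symmetric of order two, the Triple-Jump coefficients $a_1=a_3=\tfrac{1}{2-\sqrt[3]{2}}$, $a_2=-\tfrac{\sqrt[3]{2}}{2-\sqrt[3]{2}}$ satisfy $a_1+a_2+a_3=1$ and $a_1^3+a_2^3+a_3^3=0$, so the classical composition theorem (see \cite{2006Geometric}) raises the order to four.

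The main obstacle is the order-two estimate for M1-B in the strong regime: the magnetic term carries the factor $1/\epsilon$, so one must verify that freezing $\widetilde B$ at the discrete midpoint does not spoil the $\mathcal O(h^3)$ local bound, and — if an $\epsilon$-uniform constant is desired, as in Theorem~\ref{order condition} — track the $\epsilon$-dependence of every remainder. I expect this to be handled by Taylor-expanding $\widetilde B$ about $x(t_n+h/2)$ and exploiting that the midpoint evaluation annihilates the leading symmetric contribution, the same mechanism that makes the implicit midpoint rule second order, after which the remaining terms are absorbed into the generic constant. Everything else (skew-symmetry bookkeeping, the endpoint identities, and the composition argument) is routine.
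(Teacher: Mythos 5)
Your proposal is correct and follows essentially the same route as the paper: the paper's own proof simply says to replace $K$ by $\widetilde{K}_n$ in the earlier symmetry and energy-conservation arguments, to reduce the convergence of M1-B to the linearized problem with the field frozen at the discrete midpoint (citing \cite{WZ} for that reduction, where you instead Taylor-expand $\widetilde{B}$ about the midpoint directly — the same mechanism), and to invoke the Triple Jump composition for M2-B. If anything, your write-up supplies more detail than the paper does, in particular the observation that the midpoint $\tfrac{x_n+x_{n+1}}{2}$ is invariant under the exchange $n+1\leftrightarrow n$, $h\leftrightarrow -h$, which is the point that makes the symmetry argument carry over.
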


\begin{proof}
$\bullet$ By replacing $K$ in the previous proof with $\widetilde{K}_n$ and with the same arguments as before, the symmetry and energy conservation of M1-B can be proved. Then using the symmetric splitting gives the structure conservations of M2-B immediately.

$\bullet$ According to the analysis of \cite{WZ}, it is known that  the convergence of second-order integrator can be obtained by showing the errors when the considered integrator is applied to the  linearized   problem
 \begin{equation*}
\begin{array}[c]{ll}
\ddot{\tilde{x}}(s)=\dot{\tilde{x}}(s) \times   \frac{1}{\epsilon}\widetilde{B}(\frac{x_{n}+x_{n+1}}{2}) +F(\tilde{x}(s)), \quad
\tilde{x}(0)=x(t_n),\quad \dot{\tilde{x}}(t_0)=\dot{x}(t_n),\ \ s\in[0,h],
\end{array}
\end{equation*}
for some $t=t_n+s$ with $n\geq0$. Under this case,  by the variation-of-constants formula  and the convergence analysis stated before, the second order convergence of M1-B can be established. Finally,  Triple Jump splitting scheme acting on M1-B implies the fourth order accuracy of M2-B.

\end{proof}
  \subsection{Numerical tests}

 In what follows, we carry out two numerical tests to show the behaviour of the above two methods. The methods chosen for comparison are still BORIS, AVF and SEP, which are given in Section \ref{sec:tests}.
\noindent\vskip3mm \noindent\textbf{Problem 3.}
We consider the charged-particle dynamics \eqref{charged-particle sts-cons} in the   general magnetic field \cite{Lubich2017}, and the  potential $U(x)$ and field $B(x)$ are given by
$$U(x)=\frac{1}{100\sqrt{x_{1}^2+x_{2}^2}},\ \ \ B(x)=(0,0,\sqrt{x_{1}^2+x_{2}^2})^{\intercal},$$
where $B(x)=\nabla \times A(x)$ with
$A(x)=\frac{1}{3}(-x_2\sqrt{x_{1}^2+x_{2}^2},x_1\sqrt{x_{1}^2+x_{2}^2},0)^{T}.$
 We take the initial values as
$x(0)=(0,1,0.1)^{\intercal},\ v(0)=(0.09,0.05,0.2)^{\intercal}.$
  The problem is solved on $[0,10]$ with $h=1/2^k,$ where $k=3,\ldots,7$.
The global errors  are presented  in Figure
 \ref{fig:problem31}. Then we solve the system
with  a step size $h=\frac{1}{100}$ on the interval $ [0,1000]$. The results of energy and momentum  are
displayed in  Figure \ref{fig:problem32} and Figure \ref{fig:problem33}, respectively.

\begin{figure}[t!]
\centering\tabcolsep=0.4mm
\begin{tabular}
[c]{ccc}%
\includegraphics[width=4.6cm,height=4.6cm]{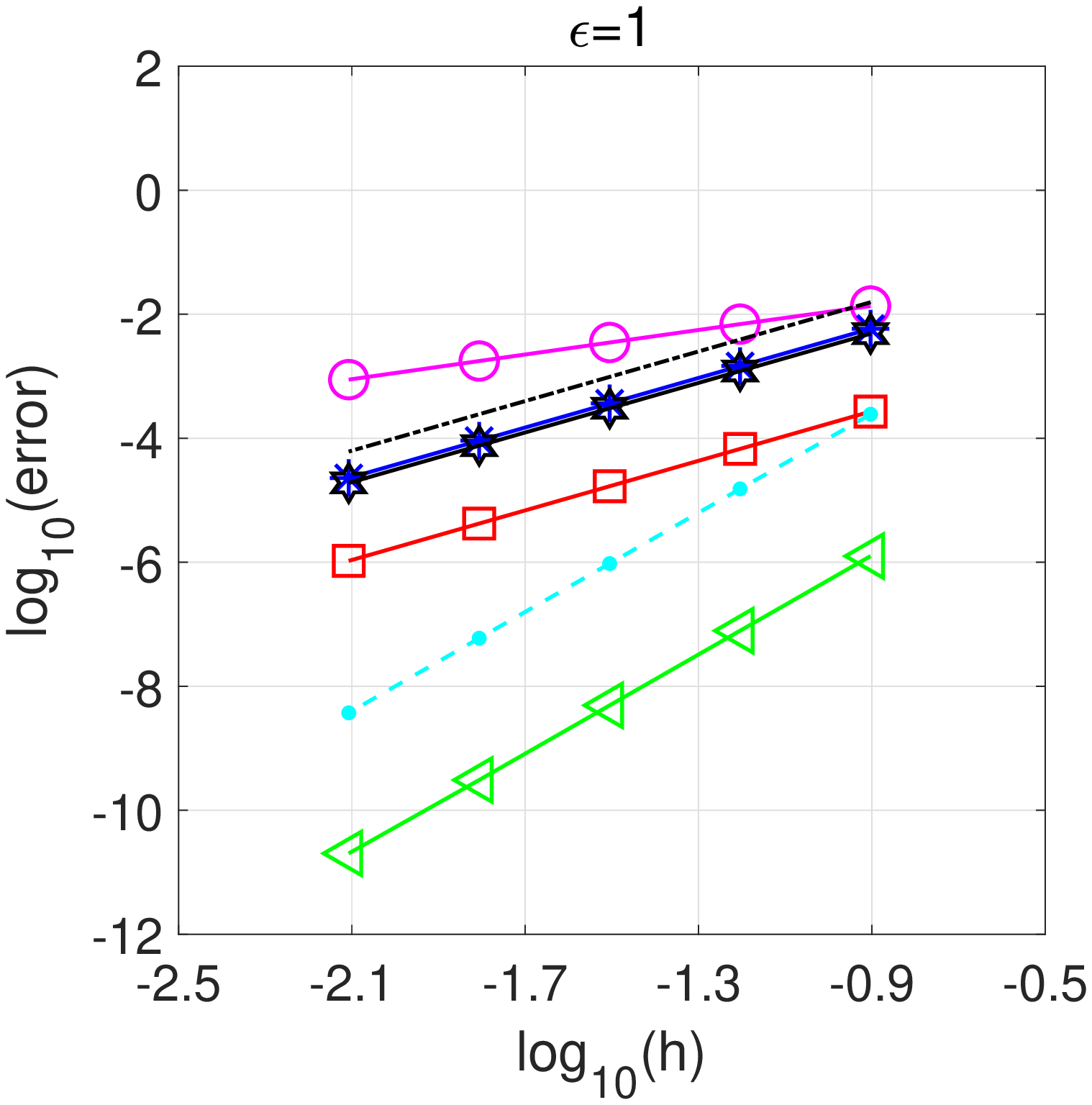} & \includegraphics[width=4.6cm,height=4.6cm]{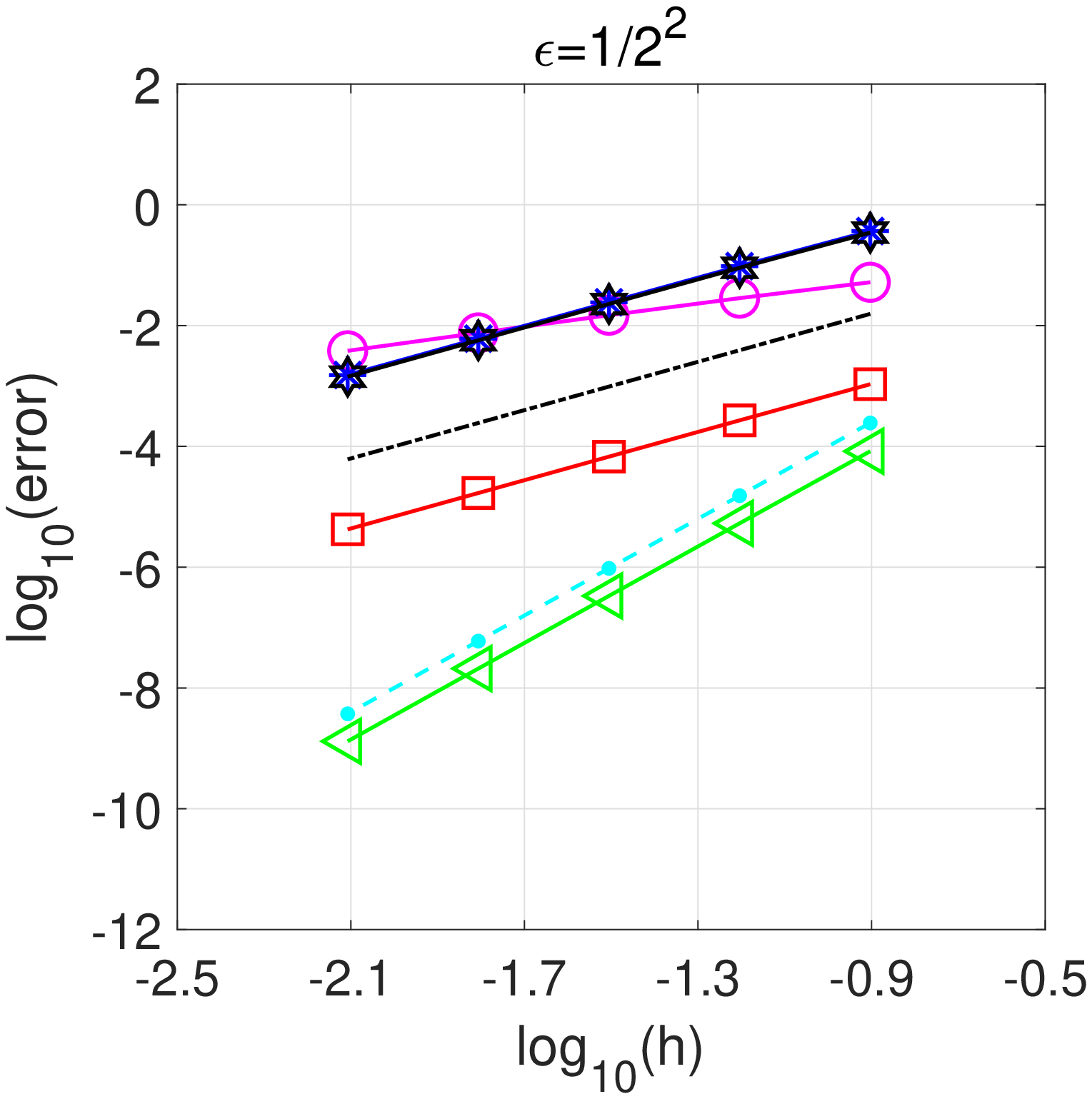}&\includegraphics[width=4.6cm,height=4.6cm]{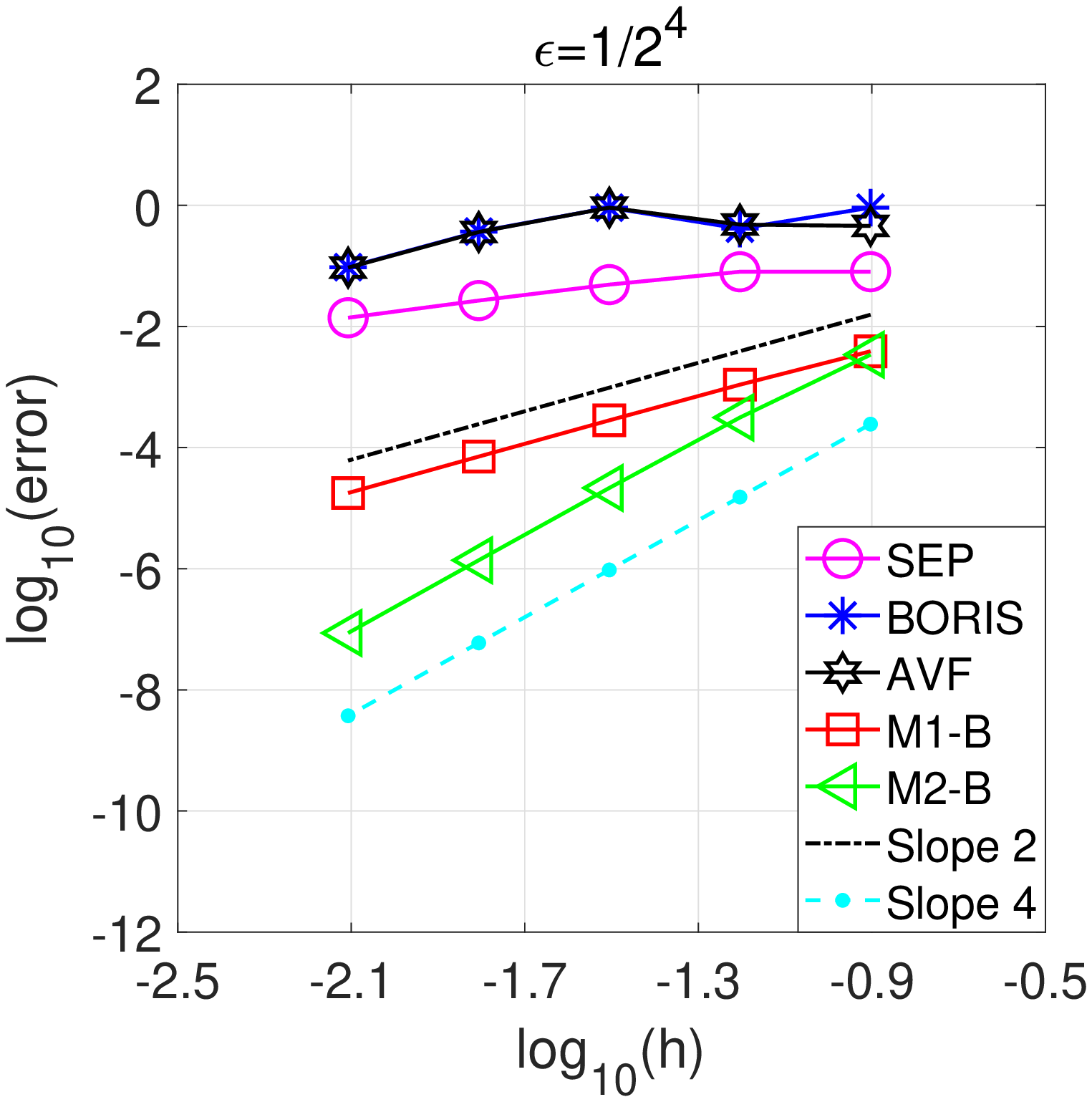}\\
{\small (i)} & {\small (ii)} & {\small (iii)}\\
\end{tabular}
\caption{Problem 3.  The global errors $error:=\frac{|x_{n}-x(t_n)|}{|x(t_n)|}+\frac{|v_{n}-v(t_n)|}{|v(t_n)|}$  with $t=10$ and $h=1/2^{k}$ for $k=3,...,7$ under different $\epsilon$. }
\label{fig:problem31}
\end{figure}

\begin{figure}[t!]
\centering\tabcolsep=0.4mm
\begin{tabular}
[c]{ccc}%
\includegraphics[width=4.6cm,height=4.6cm]{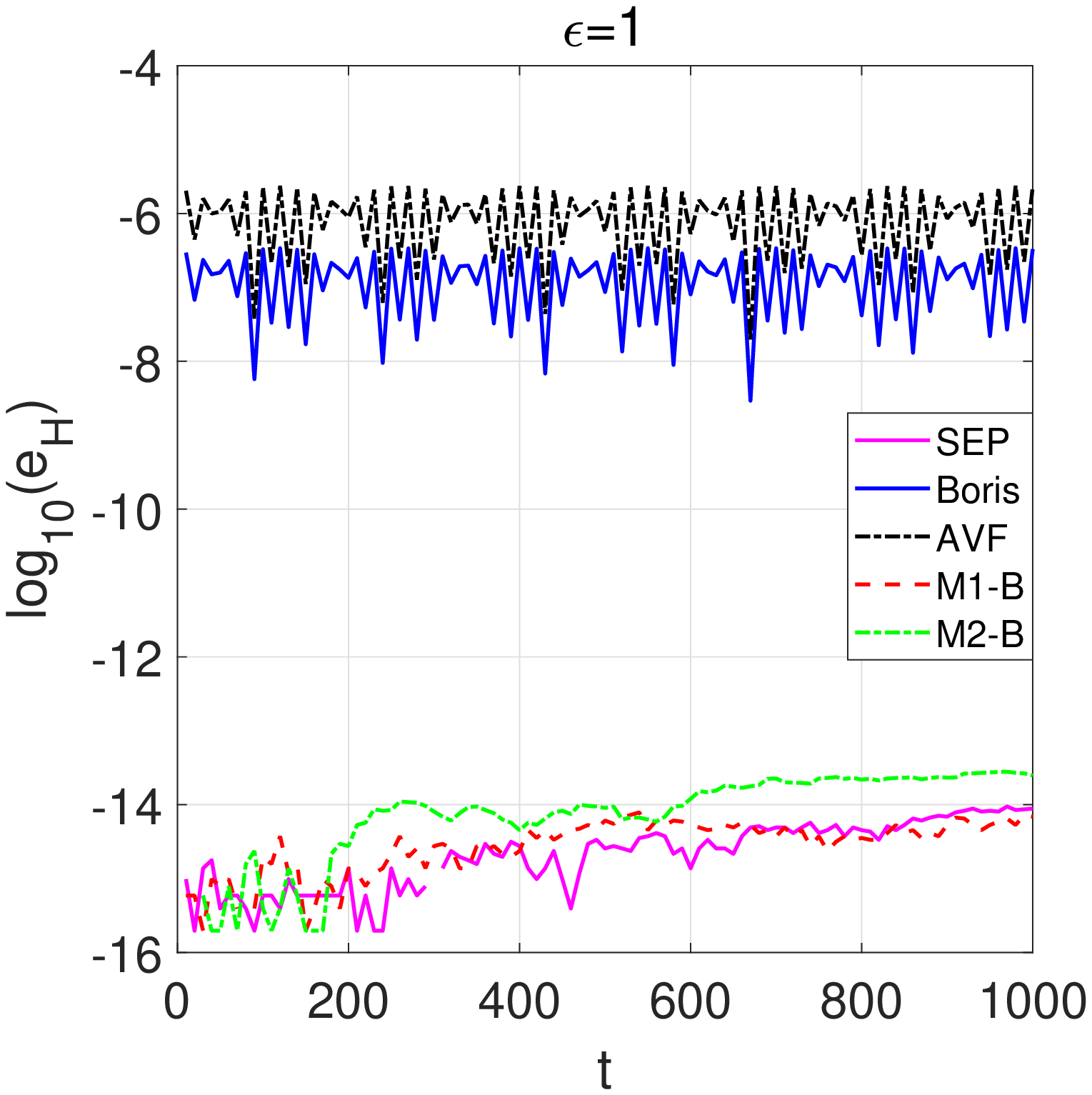} & \includegraphics[width=4.6cm,height=4.6cm]{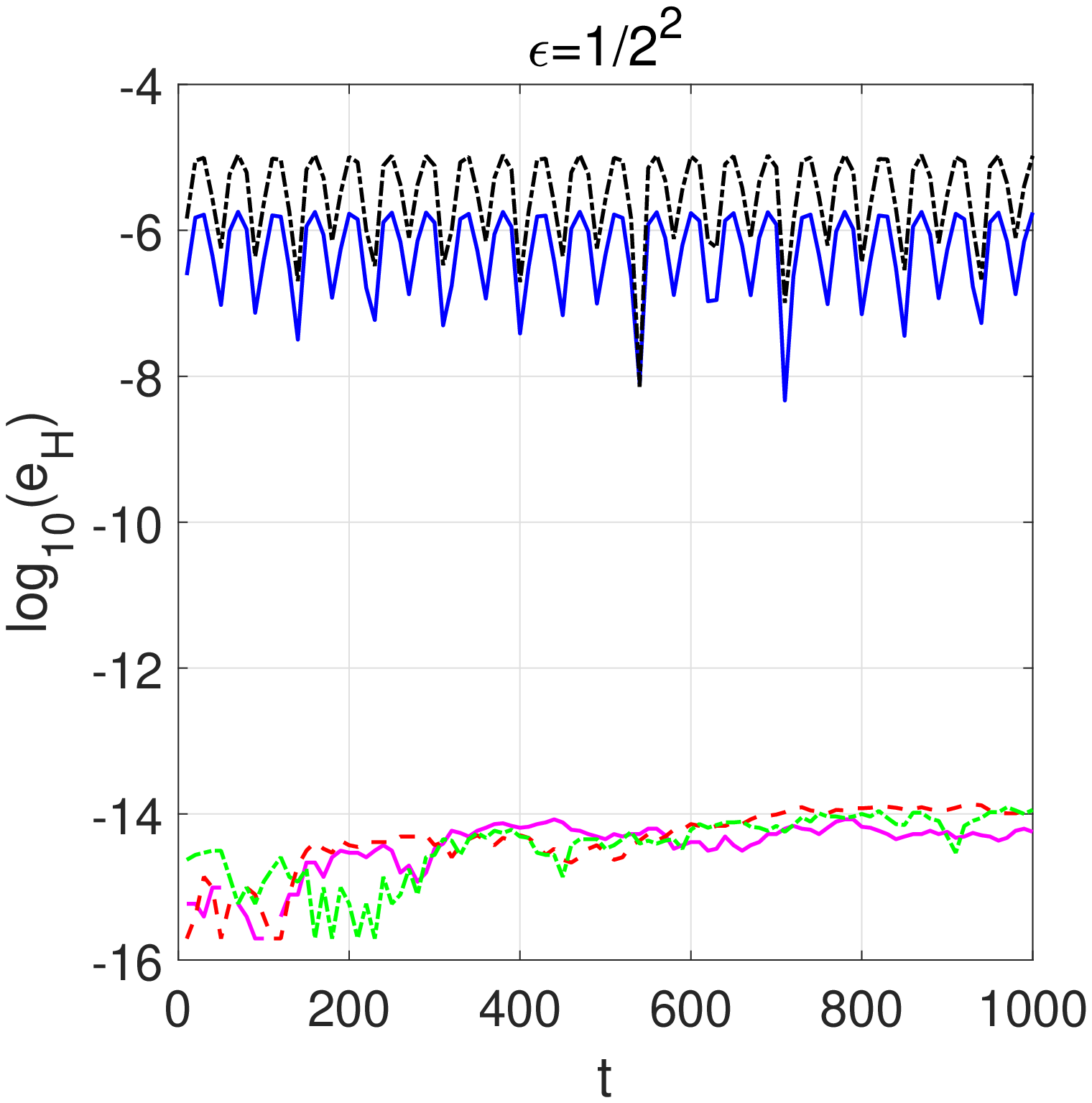} & \includegraphics[width=4.6cm,height=4.6cm]{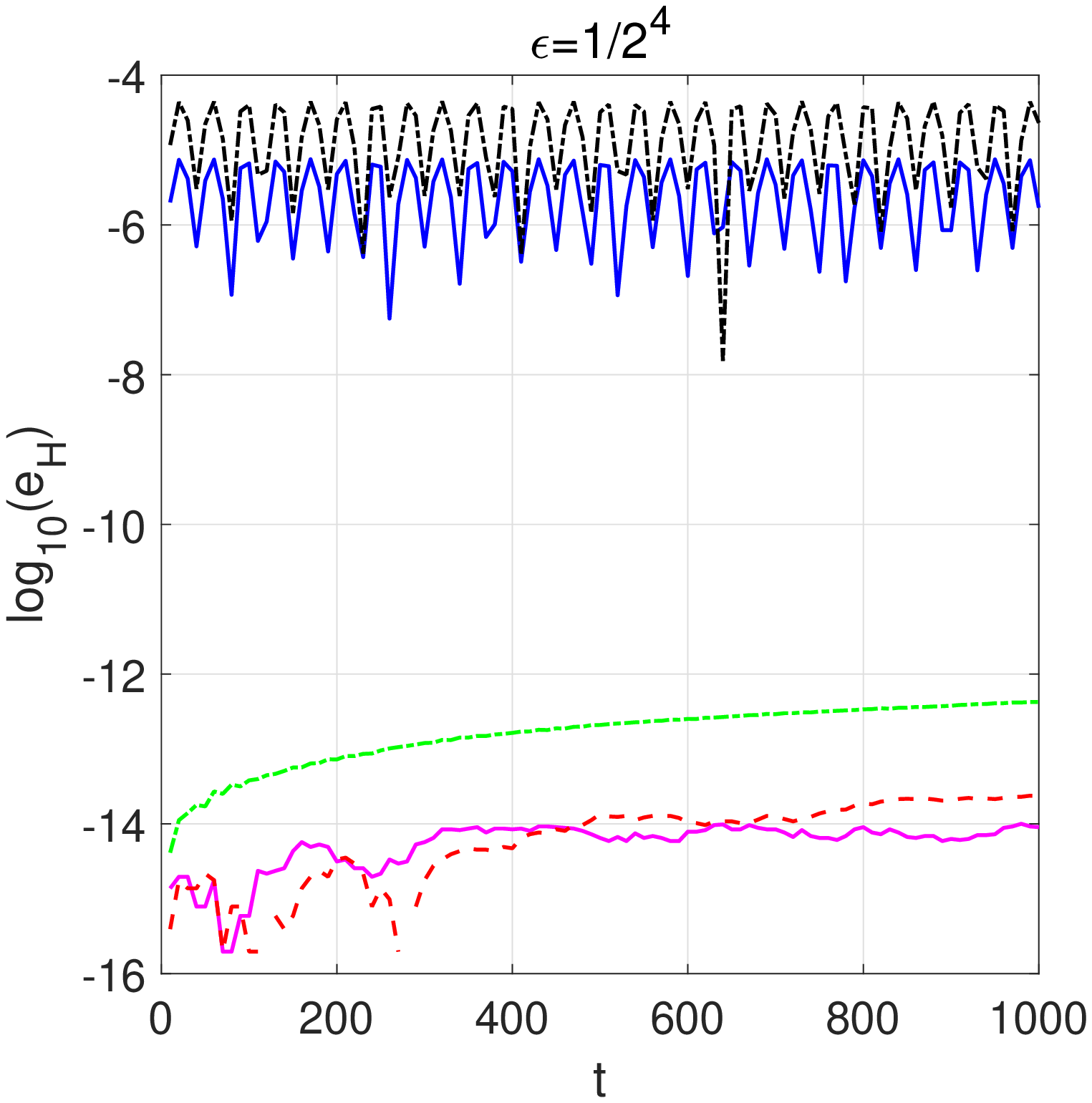}\\
{\small (i)} & {\small (ii)} & {\small (iii)}\\
\end{tabular}
\caption{Problem 3.  Evolution of the energy error $e_{H}:=\frac{|H(x_{n},v_n)-H(x_0,v_0)|}{|H(x_0,v_0)|}$ as function of time  $t=nh$.}
\label{fig:problem32}
\end{figure}

\begin{figure}[t!]
\centering\tabcolsep=0.4mm
\begin{tabular}
[c]{ccc}%
\includegraphics[width=4.6cm,height=4.6cm]{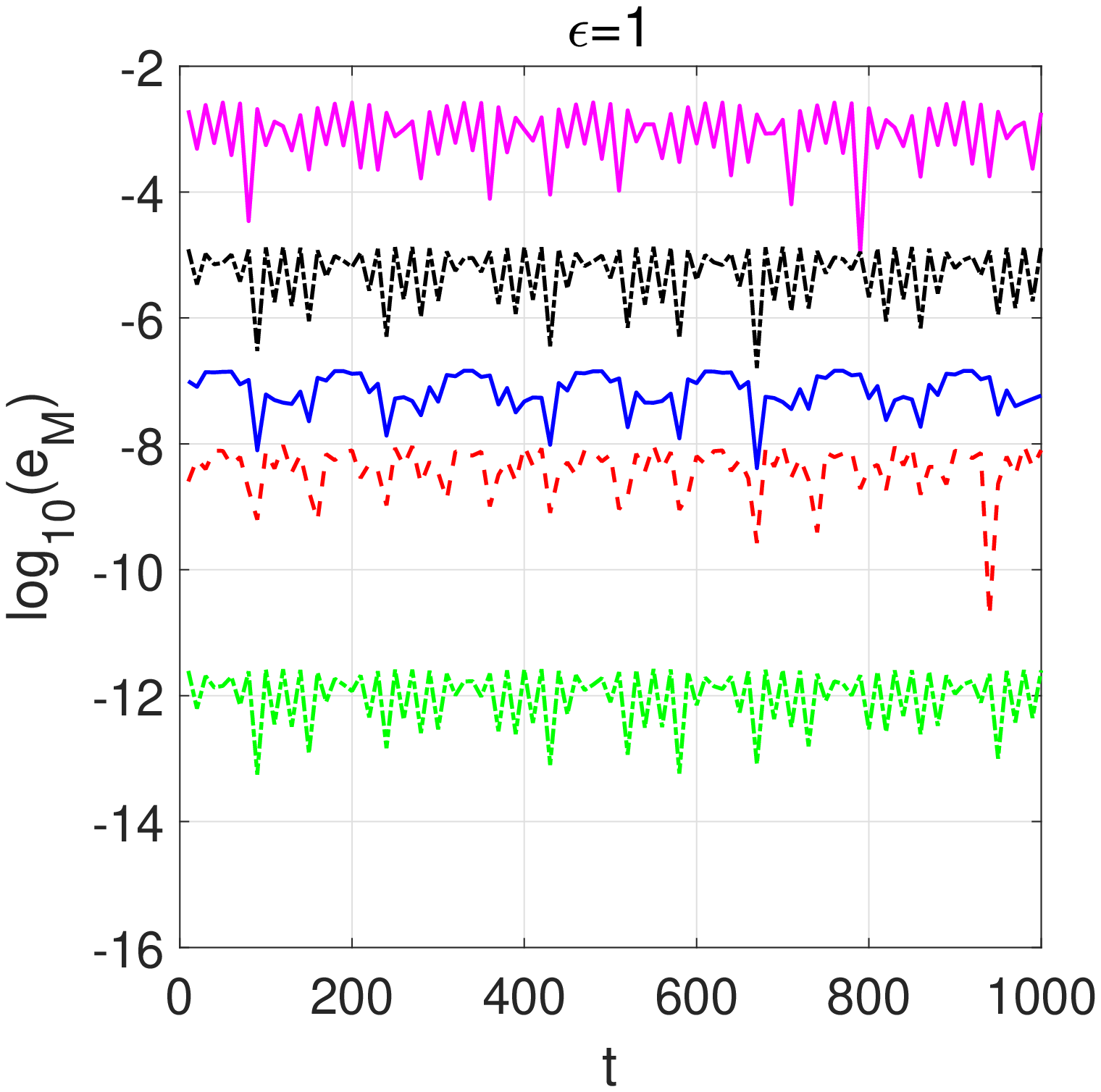} & \includegraphics[width=4.6cm,height=4.6cm]{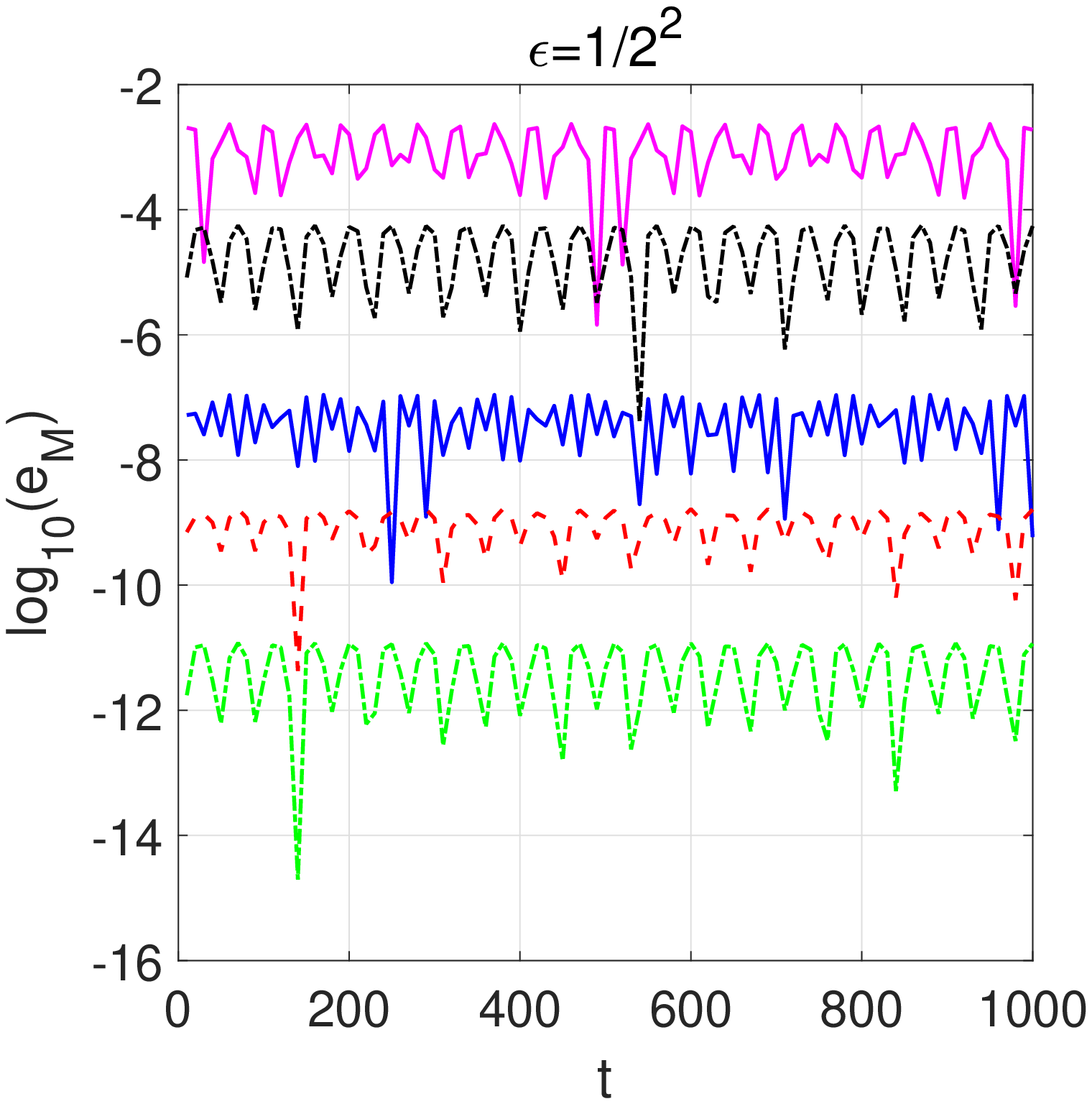}&\includegraphics[width=4.6cm,height=4.6cm]{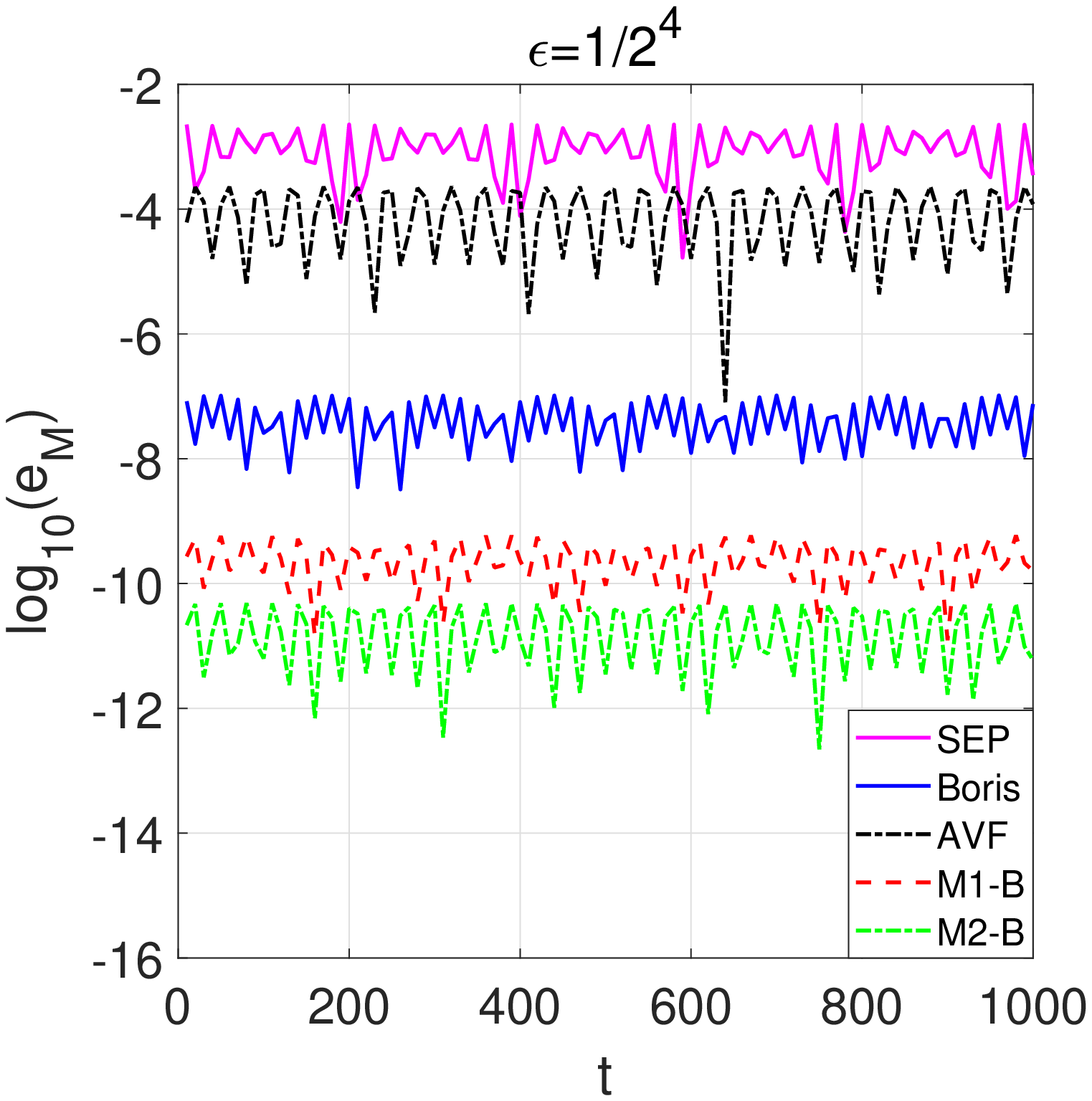}\\
{\small (i)} & {\small (ii)} & {\small (iii)}\\
\end{tabular}
\caption{Problem 3.  Evolution of the momentum error $e_{M}:=\frac{|M(x_{n},v_n)-M(x_0,v_0)|}{|M(x_0,v_0)|}$ as function of time  $t=nh$.}
\label{fig:problem33}
\end{figure}

\noindent\vskip3mm \noindent\textbf{Problem 4.}
The last test is devoted to   charged-particle dynamics \eqref{charged-particle sts-cons} with the general magnetic field \cite{Hairer2018}
 $$B(x)=\nabla \times \frac{1}{4}(x_{3}^2-x_{2}^2,x_{3}^2-x_{1}^2,x_{2}^2-x_{1}^2)^{\intercal}= \frac{1}{2}(x_{2}-x_{3},x_{1}+x_{3},x_{2}-x_{1})^{\intercal},$$
 and the scalar potential
 $U(x)=x_{1}^{3}-x_{2}^{3}+\frac{1}{5}x_{1}^{4}+x_{2}^{4}+x_{3}^{4}.$
The initial values are chosen as $x(0)=(0.0,1.0,0.1)^{\intercal}, \ v(0)=(0.09,0.55,0.30)^{\intercal}.$
This system  is integrated on $[0,10]$ with different   $\epsilon$  and $h=1/2^{k}$, where $k=3,...,7$, and see Figure \ref{fig:problem41} for the errors. Then we solve the problem with $h=\frac{1}{100}$ on  $[0,1000]$. Figure
\ref{fig:problem42} and Figure
\ref{fig:problem43} display the errors $e_{H}$ of the energy and   $e_{I}$ of the magnetic moment, respectively.

\begin{figure}[t!]
\centering\tabcolsep=0.4mm
\begin{tabular}
[c]{ccc}%
\includegraphics[width=4.6cm,height=4.6cm]{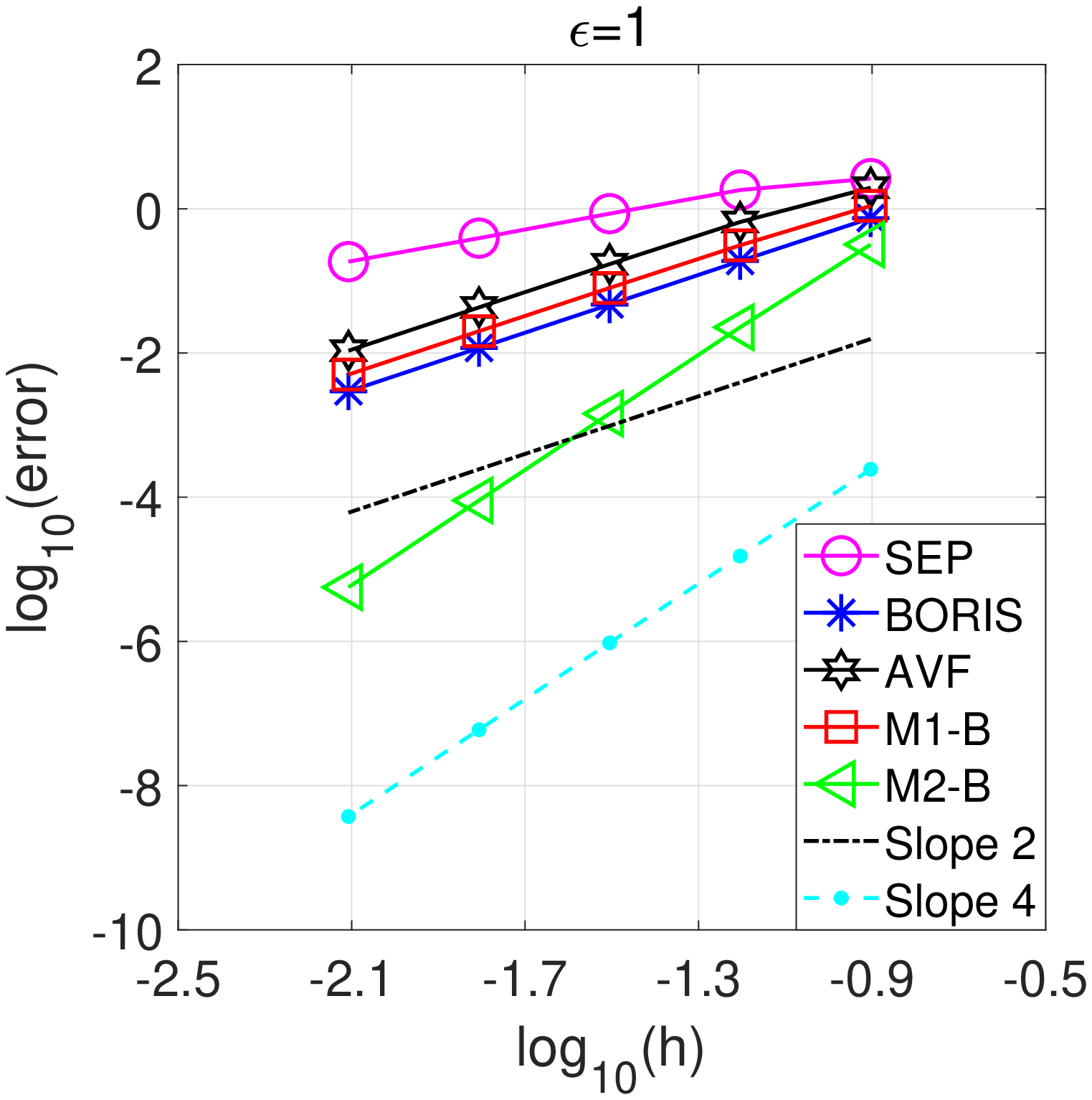} & \includegraphics[width=4.6cm,height=4.6cm]{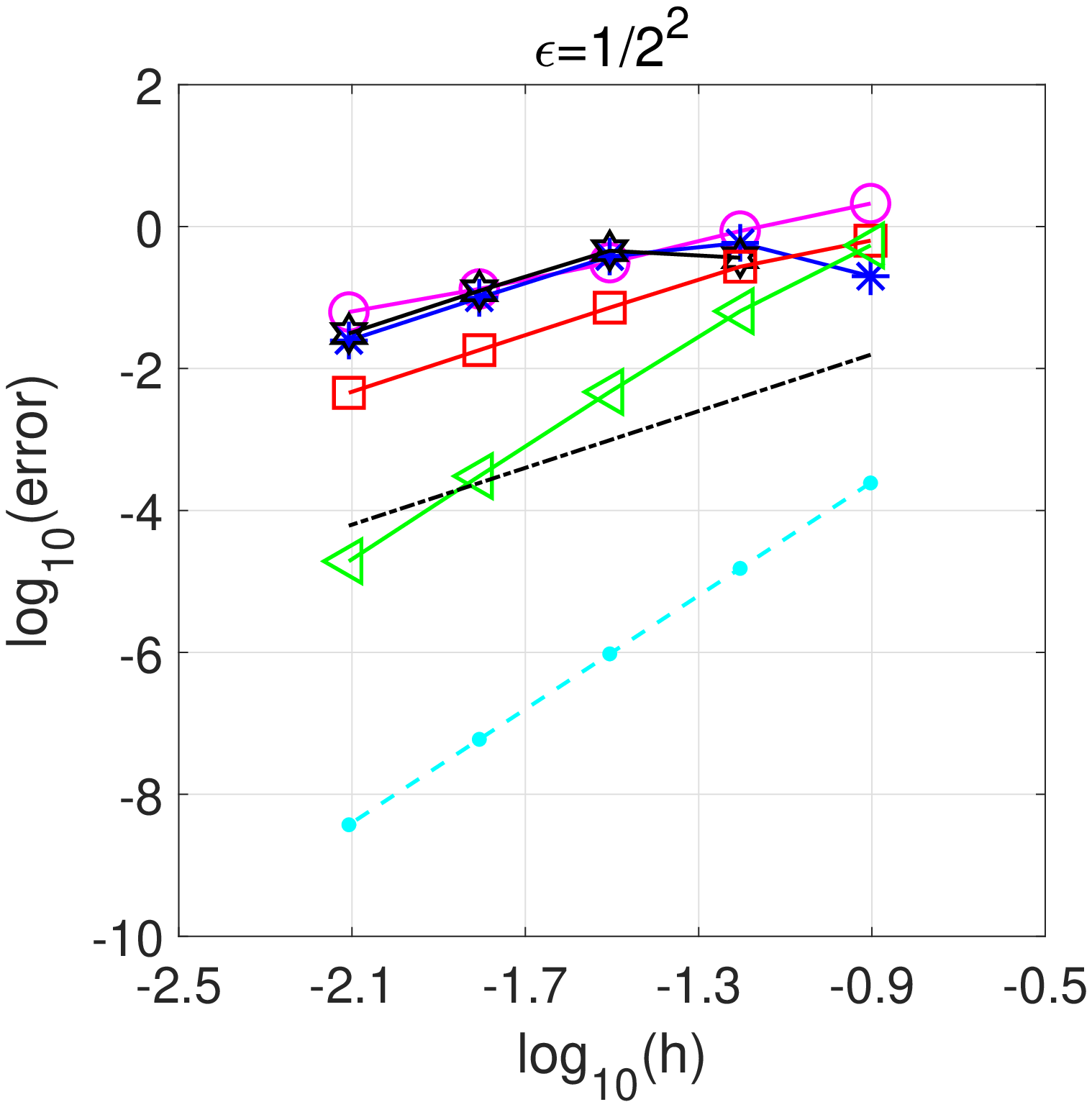} & \includegraphics[width=4.6cm,height=4.6cm]{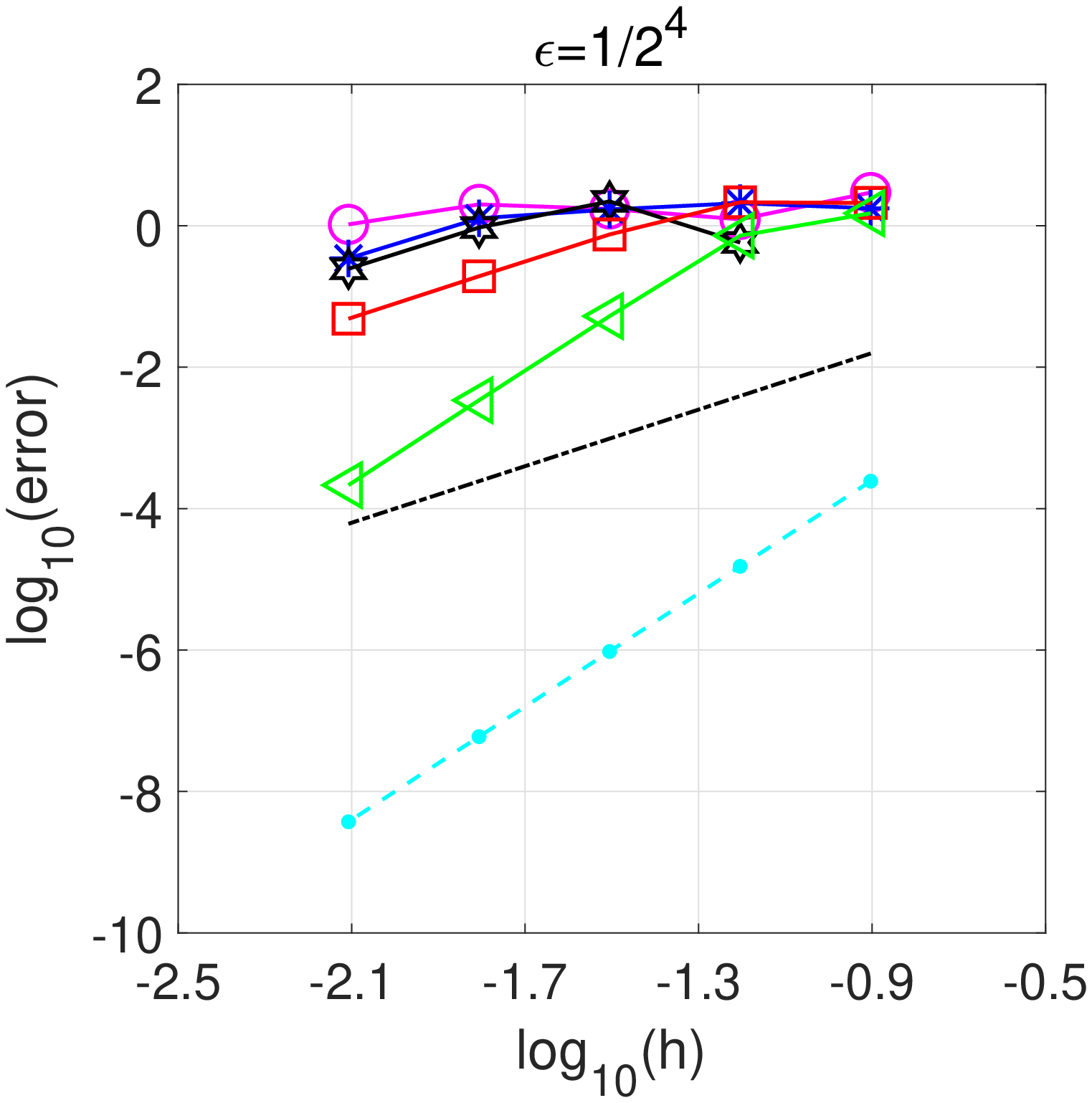}\\
{\small (i)} & {\small (ii)} & {\small (iii)}\\
\end{tabular}
\caption{Problem 4.  The global errors $error:=\frac{|x_{n}-x(t_n)|}{|x(t_n)|}+\frac{|v_{n}-v(t_n)|}{|v(t_n)|}$  with $t=10$ and $h=1/2^{k}$ for $k=3,...,7$ under different $\epsilon$. }
\label{fig:problem41}
\end{figure}

\begin{figure}[t!]
\centering\tabcolsep=0.4mm
\begin{tabular}
[c]{ccc}%
\includegraphics[width=4.6cm,height=4.6cm]{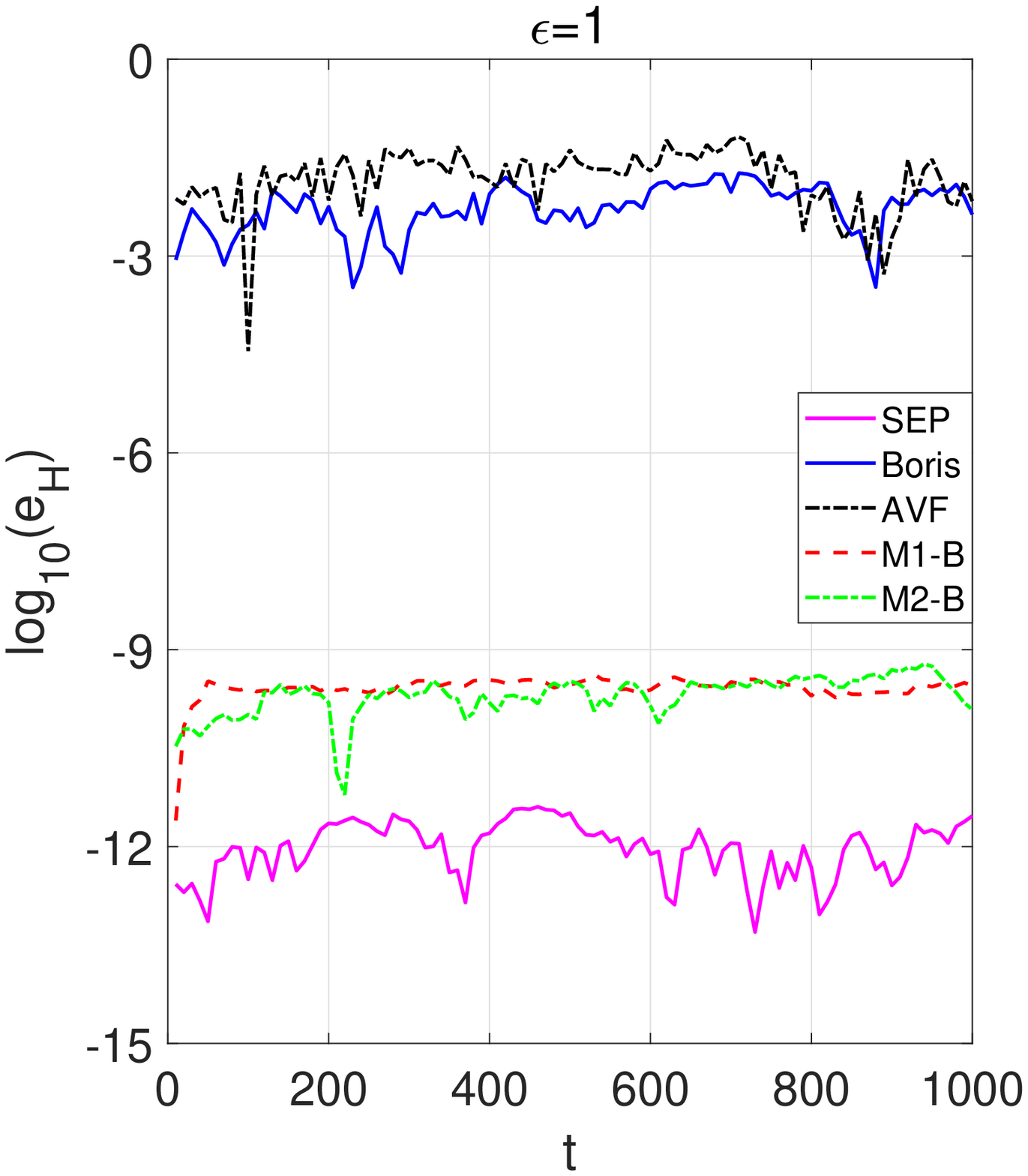} & \includegraphics[width=4.6cm,height=4.6cm]{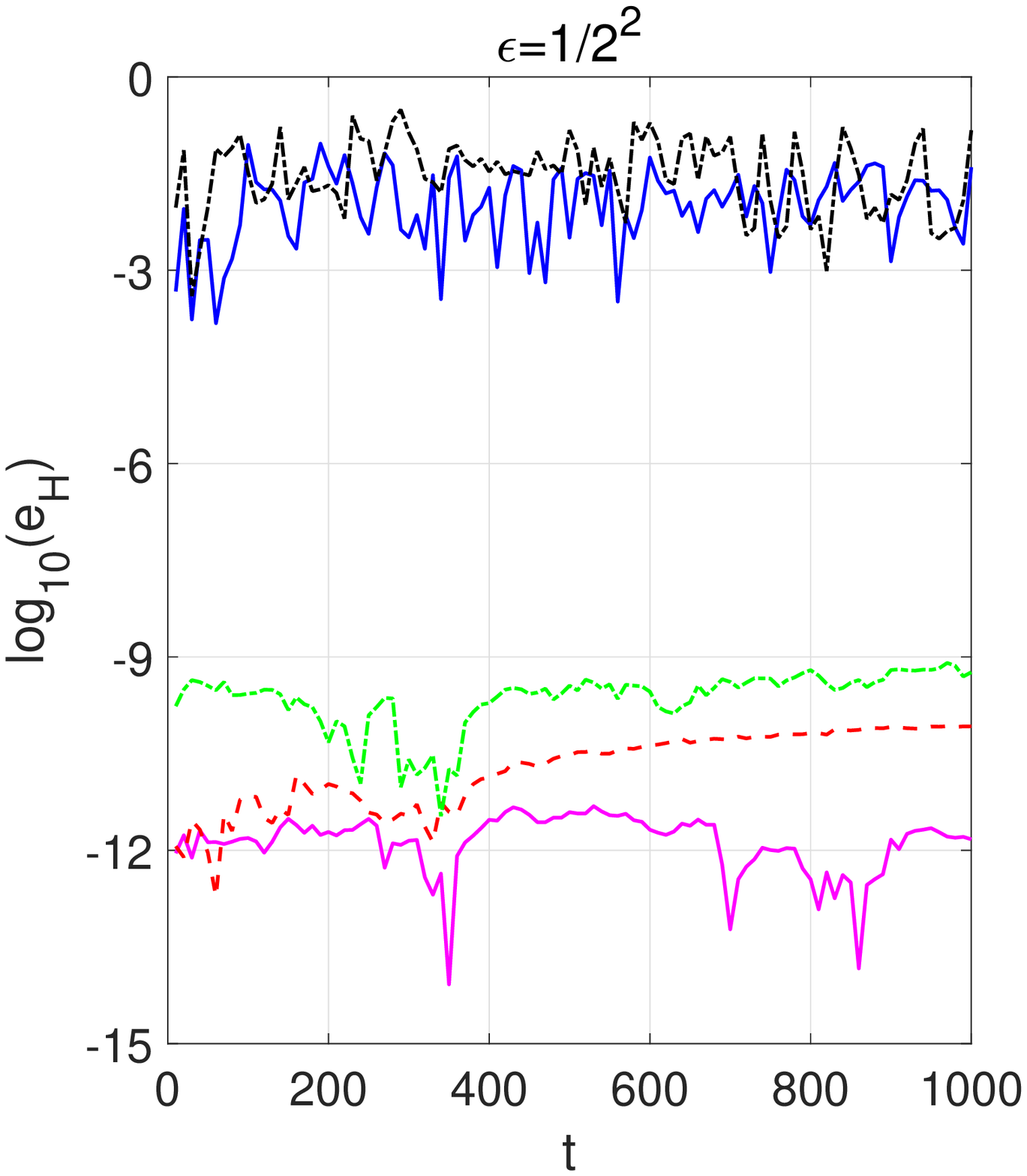} & \includegraphics[width=4.6cm,height=4.6cm]{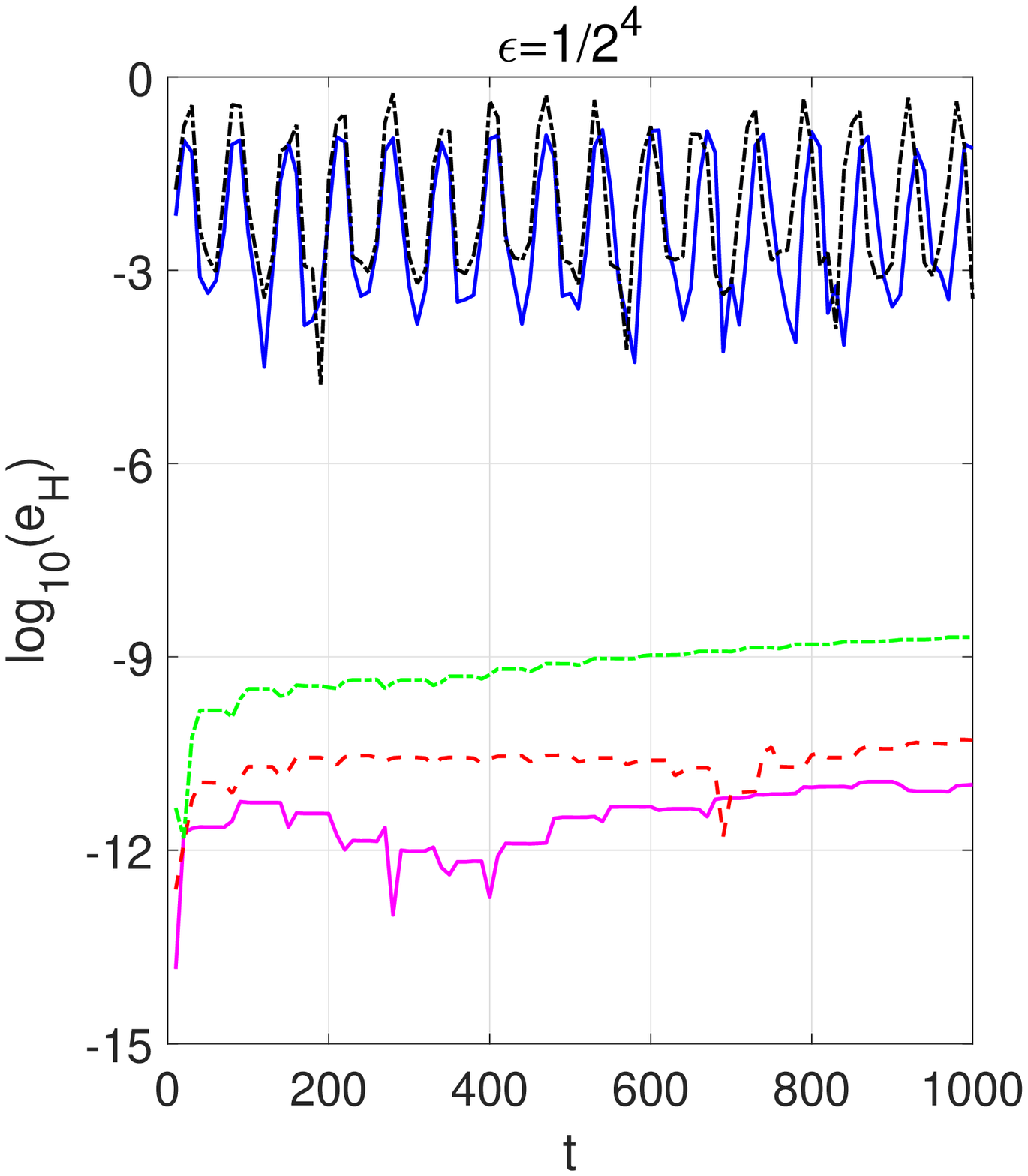}\\
{\small (i)} & {\small (ii)} & {\small (iii)}\\
\end{tabular}
\caption{Problem 4. Evolution of the energy error $e_{H}:=\frac{|H(x_{n},v_n)-H(x_0,v_0)|}{|H(x_0,v_0)|}$ as function of time  $t=nh$.}
\label{fig:problem42}
\end{figure}

\begin{figure}[t!]
\centering\tabcolsep=0.4mm
\begin{tabular}
[c]{ccc}%
\includegraphics[width=4.6cm,height=4.6cm]{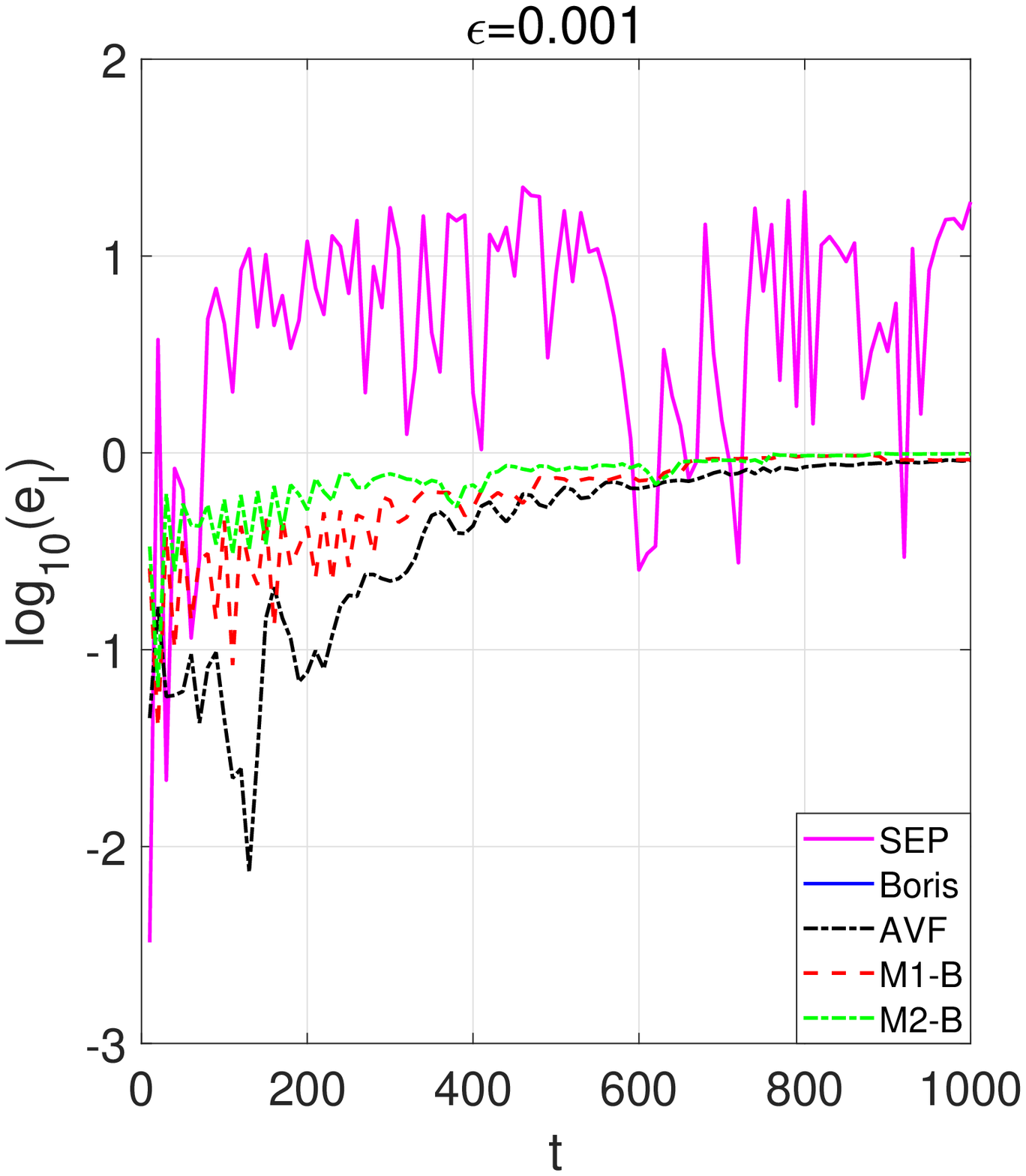} & \includegraphics[width=4.6cm,height=4.6cm]{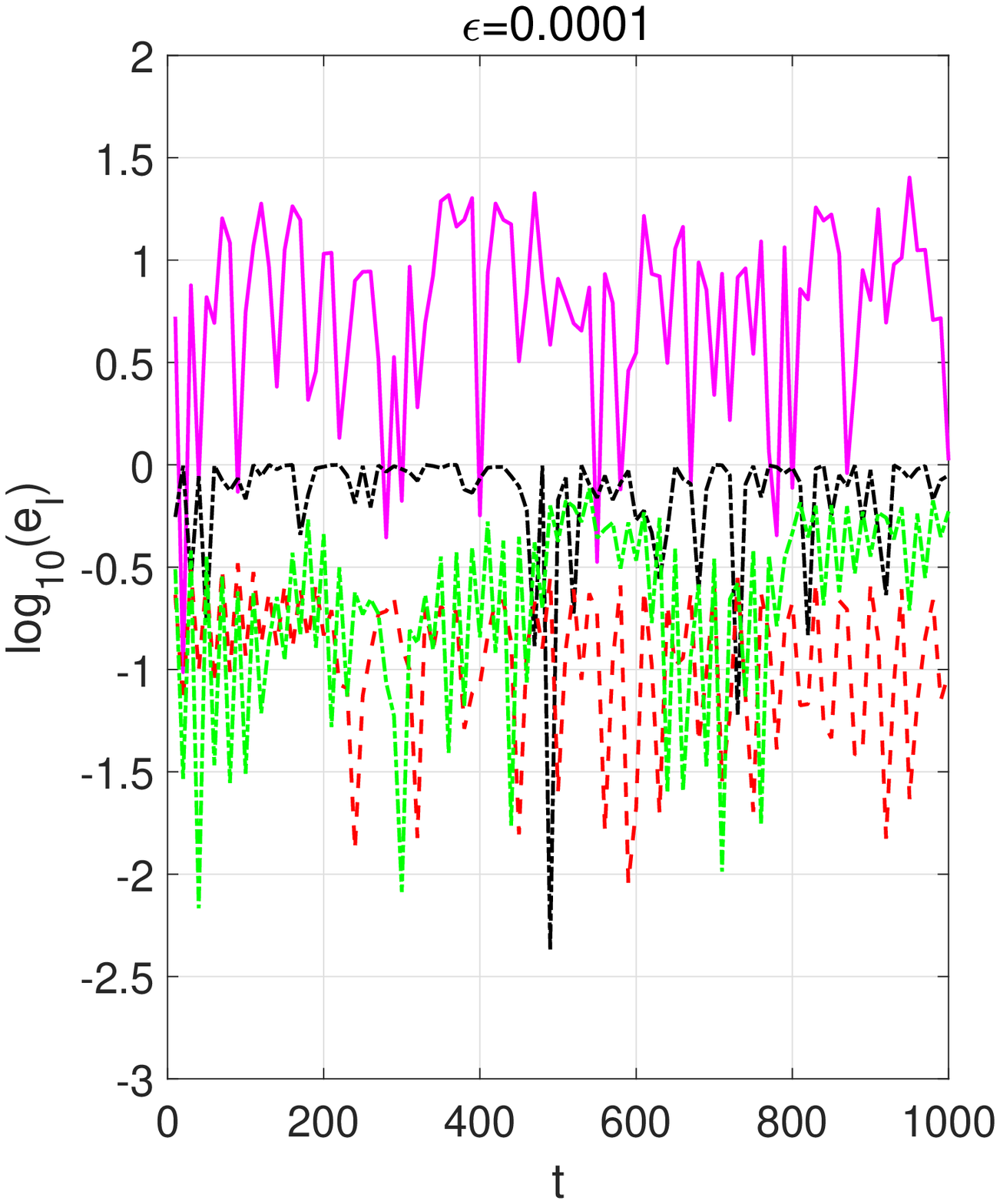} & \includegraphics[width=4.6cm,height=4.6cm]{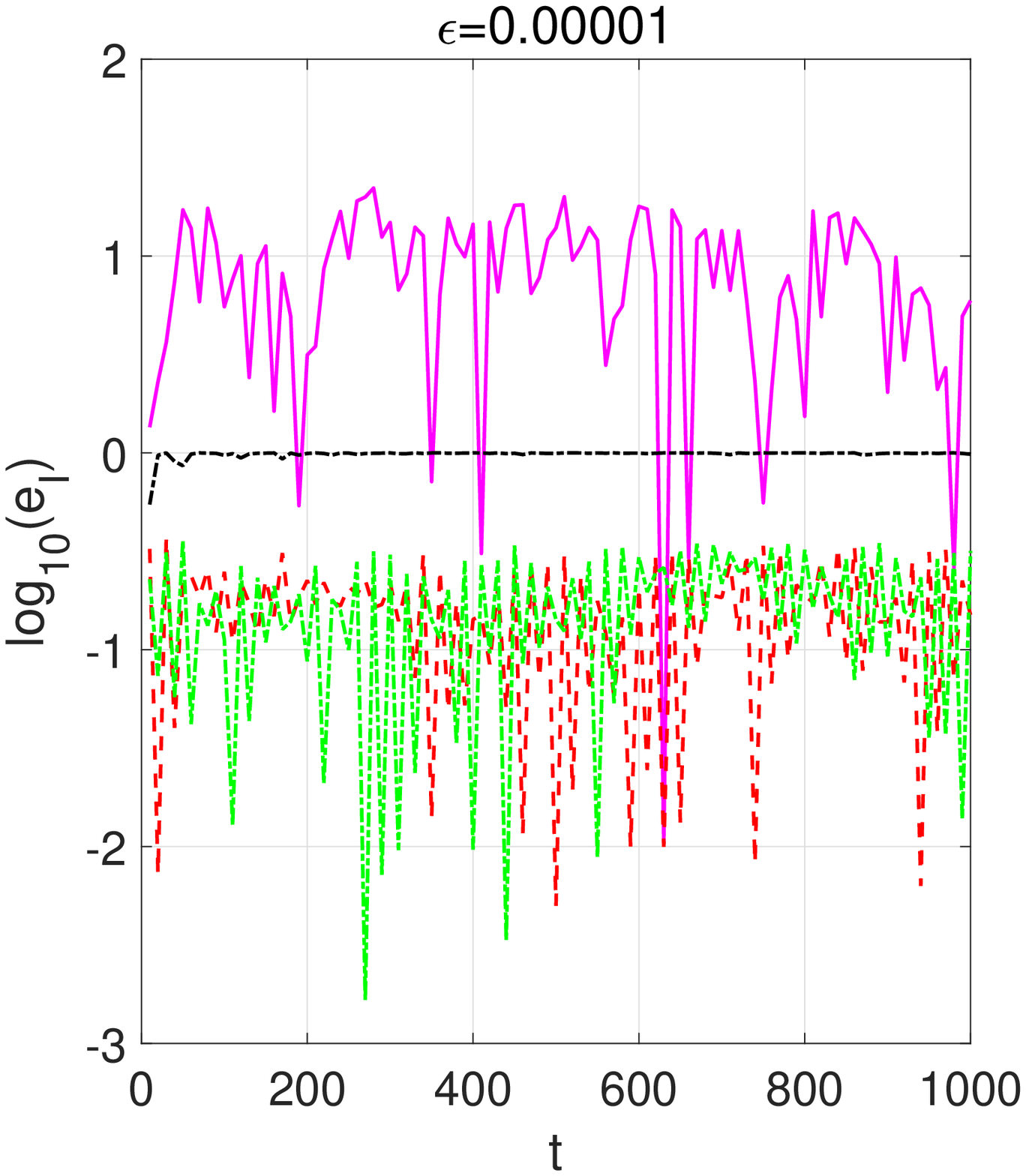}\\
{\small (i)} & {\small (ii)} & {\small (iii)}\\
\end{tabular}
\caption{Problem 4.  Evolution of the magnetic moment error $e_{I}:=\frac{|I(x_{n},v_n)-I(x_0,v_0)|}{|I(x_0,v_0)|}$ as function of time  $t=nh$.}
\label{fig:problem43}
\end{figure}

From the results of these two tests, it can be observed that AVF is no longer energy preserving and other
 numerical phenomena are similar as Problems 1-2.
In conclusion, the M1-C and M2-C respectively show second order and fourth order in the accuracy, conserve the energy with good accuracy and have   long time near conservations in  the momentum and magnetic moment (when $\eps$ is small). The theoretical analysis of the near conservation in  the momentum and magnetic moment will be considered in our next work.

%
%
%

\section{Conclusions}\label{sec:conclu}

In this paper, geometric continuous-stage exponential energy-preserving integrators
  for the charged-particle dynamics (CPD) \eqref{charged-particle sts-cons} were  formulated and developed. The novel integrators were designed for the CPD in a magnetic field from normal to strong regimes, and they work well for these both regimes. We analysed  the energy-preserving conditions, symmetric conditions and order conditions for the novel integrators. Using these results,  two  symmetric continuous-stage exponential energy-preserving  schemes of order up to  four were constructed. The numerical experiments were performed  and  the results showed
 that our novel  methods were more effective than some  existing methods in the literature.

Last but not least, it is noted that the accuracy of the proposed integrators of this paper is not uniform in $\epsilon$ which is different from  the uniformly accurate methods  \cite{VP1,Zhao,VP3,VP8}. The formulation of uniformly accurate methods with exact energy conservation  is interesting but  very challenging.
Very recently, the work \cite{WZ21} succeeded in equipping the favorable uniformly accuracy methods with near conservation laws in long times. We hope to make some  progress on the topic of energy-preserving uniformly accurate methods in our future work.

\end{document}